\newtheorem{theorem}{Theorem}[section]
\newtheorem{lemma}[theorem]{Lemma}
\newtheorem{proposition}[theorem]{Proposition}
\newtheorem{corollary}[theorem]{Corollary}
\theoremstyle{definition}
\newtheorem{definition}[theorem]{Definition}
\newtheorem*{remark}{Remark}
\newtheorem*{theorem*}{Theorem}
\DeclareMathOperator{\id}{Id}
\newcommand{\norm}[1]{\Vert #1 \Vert}
\newcommand{\nnorm}[1]{\lvert\!|\!| #1|\!|\!\rvert}
\newcommand{\E}{{\mathbb E}}
\newcommand{\N}{{\mathbb N}}
\newcommand{\ZZ}{{\mathbb Z}}
\newcommand{\K}{{\mathcal K}}
\newcommand{\CI}{{\mathcal I}}
\newcommand{\CA}{{\mathcal A}}
\newcommand{\CF}{{\mathcal F}}
\newcommand{\CW}{{\mathcal W}}
\newcommand{\CX}{{\mathcal X}}
\newcommand{\CY}{{\mathcal Y}}
\newcommand{\CZ}{{\mathcal Z}}
\begin{document}

\title[]{Multiple recurrence for two commuting transformations}
\author{Qing Chu}

\address{Universit\'{e} Paris-Est, Laboratoire d'Analyse et de Math\'{e}matiques
Appliqu\'{e}es, UMR CNRS 8050, 5 bd Descartes, 77454 Marne la Vall\'{e}e Cedex 2, France}

\email{qing.chu@univ-mlv.fr}

\subjclass[2000]{37A05, 37A30}

\date{\today}%

\keywords{Multiple recurrence, commuting transformations, magic system, ergodic
seminorms}

\begin{abstract}
This paper is devoted to a study of the multiple recurrence of two commuting transformations. We derive a result which is similar but not identical to that of one single transformation established by Bergelson, Host and Kra. We will use
the machinery of ``magic systems" established
recently by B. ~Host for the proof.
\end{abstract}

\maketitle

\section{Introduction}

\subsection{History and results}
Let $(X,\CX,\mu,T)$ be an invertible measure preserving system, and $A$ be a set of positive measure. The Khintchine's Recurrence Theorem~\cite{K} states that for every $\epsilon>0$, the set
$$\{n\in\ZZ\colon\ \mu(A\cap T^n A)>\mu(A)^2-\epsilon\}$$
is syndetic. More recently, Furstenberg~\cite{Fur0} proved a Multiple Recurrence Theorem, showing that under the same assumptions,  the set $$\{n\in \ZZ\colon\ \mu(A\cap T^n A\cap T^{2n}A\cap\cdots\cap T^{kn}A)>0\}$$ is syndetic for every integer $k\geq 1$. Aiming at a simultaneous extension of Khintchine's and
Furstenberg's Recurrence theorems, Bergelson, Host and
Kra~\cite{BHK} established the following result:

\begin{theorem*}[Bergelson, Host, Kra]
 Let $(X,\CX,\mu,T)$ be an ergodic invertible measure preserving system and $A\in\CX$ with $\mu(A)>0$. Then for every $\epsilon>0$, the sets
$$\{n\in \ZZ\colon\ \mu(A\cap T^n A\cap T^{2n}A)>\mu(A)^3-\epsilon\}$$ and
$$\{n\in \ZZ\colon\ \mu(A\cap T^n A\cap T^{2n}A\cap T^{3n}A)>\mu(A)^4-\epsilon\}$$
are syndetic. 
\end{theorem*}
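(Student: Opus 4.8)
The plan is to show, writing $f=\mathbf 1_A$ and $a=\mu(A)$, that the sequence $c(n):=\mu(A\cap T^nA\cap T^{2n}A)$ decomposes as $c(n)=\Phi(n)+R(n)$, where $\Phi$ is large along a syndetic set and $R$ tends to zero in \emph{uniform density}, i.e.\ $\lim_{N\to\infty}\sup_{M\in\ZZ}\frac1N\sum_{n=M}^{M+N-1}|R(n)|=0$; and likewise for the four-term sequence with $a^4$ replacing $a^3$. Uniform density zero is exactly what is needed, by the elementary remark that if $S\subseteq\ZZ$ is syndetic with gap bound $L$ and $B\subseteq\ZZ$ has uniform density zero, then $S\setminus B$ is still syndetic: in a window long enough that the $B$-average on it is below $1/(2L)$, at least one point of $S$ avoids $B$. (Mere density zero of $B$ would not suffice.)

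To obtain this decomposition I would take $\tilde f=\E(f\mid\CZ_2)$, the conditional expectation onto the Host--Kra factor $\CZ_2$, so that $\nnorm{f-\tilde f}_3=0$ (for the four-term sequence, use $\CZ_3$ and $\nnorm{\cdot}_4$), and put $\Phi(n)=\int\tilde f\cdot T^n\tilde f\cdot T^{2n}\tilde f\,d\mu$. Expanding $\int f\cdot T^nf\cdot T^{2n}f\,d\mu$ after writing $f=\tilde f+(f-\tilde f)$ in each slot, $\Phi(n)$ is the all-$\tilde f$ term and $R(n)$ collects the rest, every remaining term carrying the uniform function $f-\tilde f$ in at least one place. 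The Host--Kra generalized von Neumann inequality, in its form uniform over the averaging window --- this is precisely where the theory of ergodic seminorms and Host's \emph{magic systems} enters, since on a magic extension the seminorms have an explicit description making these uniform estimates transparent --- together with a Cauchy--Schwarz step passing to the product system $(X\times X,\,T\times T)$ so as to control $|R(n)|$ in Ces\`{a}ro average, and not merely $R(n)$, yields the uniform-density-zero estimate. As $\tilde f$, hence $\Phi$, is measurable with respect to $\CZ_2$, the problem is reduced to that factor.

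For $\Phi$, invoke the Host--Kra structure theorem: $\CZ_2$ is an inverse limit of $2$-step nilsystems ($\CZ_3$ of $3$-step ones). Choose a finite-step nilsystem factor $X=G/\Gamma$ with $\E(\tilde f\mid X)$ close to $\tilde f$ in $L^1$, and then a continuous $F$ with $0\le F\le 1$ close to $\E(\tilde f\mid X)$ in $L^1$; since every function involved is bounded by $1$, a telescoping estimate gives $\Phi(n)=\Phi_F(n)+O(\epsilon)$ \emph{uniformly in $n$}, with $\Phi_F(n)=\int_X F\cdot T^nF\cdot T^{2n}F\,d\mu_X$ and $|\int_X F\,d\mu_X-a|\le\epsilon$. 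Two facts then finish the argument: (a) on a nilsystem $n\mapsto\Phi_F(n)$ is a nilsequence --- a multicorrelation along an arithmetic progression in a nilsystem is a nilsequence; concretely one integrates $F^{\otimes 3}$, over the diagonal sub-nilmanifold of $X\times X\times X$, against the $n$-th iterate of the nilrotation by $(e,g,g^2)$ --- and (b) a minimal nilsystem is distal, so every point is uniformly recurrent, whence for any nilsequence $\psi$ the set $\{n:\psi(n)>\psi(0)-\epsilon\}$ is syndetic, taking the neighbourhood $\{\psi>\psi(0)-\epsilon\}$ of the base point. By Jensen's inequality ($t\mapsto t^3$ convex on $[0,\infty)$, $F\ge 0$), $\Phi_F(0)=\int_X F^3\,d\mu_X\ge\bigl(\int_X F\,d\mu_X\bigr)^3\ge a^3-O(\epsilon)$, so $\{n:\Phi_F(n)>a^3-\epsilon'\}$ is syndetic; with the previous step this makes $\{n:c(n)>a^3-\epsilon''\}$ syndetic. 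The four-term case is identical, using $\CZ_3$, $3$-step nilsystems, the nilrotation by $(e,g,g^2,g^3)$, and $\Phi_F(0)=\int_X F^4\,d\mu_X\ge a^4$.

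I expect the main obstacle to be the first step in its uniform form --- establishing that $R$ has uniform density zero, not merely density zero. This is what turns the conclusion into an assertion about a syndetic set of individual $n$ rather than an averaged statement, and it carries essentially all of the analytic weight: the ergodic seminorms and their behaviour under the passage to product systems, van der Corput iterations uniform in the starting index, and the structure theorem relating $\CZ_k$ to nilsystems, organized through the \emph{magic system} formalism. By comparison the analysis of $\Phi$ --- multicorrelations in nilsystems are nilsequences, minimal nilsystems are uniformly recurrent, and the Jensen bound at the origin --- is short. It should be noted that the scheme is tied to these two configurations: for three terms one may even replace $\CZ_2$ by the Kronecker factor and argue by classical almost periodicity, and four terms is the longest progression for which the structure of the characteristic factor returns the \emph{sharp} power through Jensen's inequality, which is why longer configurations --- and the result of the present paper for two commuting transformations --- are ``similar but not identical.''
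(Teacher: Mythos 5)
First, a point of orientation: the statement you were asked about is the Bergelson--Host--Kra theorem, which this paper only quotes from~\cite{BHK} as background; the paper contains no proof of it (its own theorems, for two commuting transformations, are proved by Frantzikinakis's weighted Bohr-average method together with the inequality of Lemma~\ref{ineq} and magic extensions). So your proposal has to be judged on its own, and it has a fatal gap in the lower-bound step, plus a repairable one in the decomposition step.

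The fatal gap is the claim that $\Phi_F(n)=\int F\cdot T^nF\cdot T^{2n}F\,d\mu_X$ on a nilsystem is a nilsequence to which you may apply ``uniform recurrence to a neighbourhood of the value at $0$'' together with Jensen, $\Phi_F(0)=\int F^3\ge(\int F)^3$. The correlation sequence is \emph{not} a nilsequence; it only agrees with one away from $n=0$ (equivalently, up to a sequence vanishing in uniform density), and the value at $0$ of that nilsequence is the average over the \emph{diagonal sub-nilmanifold constraints}, not $\int F^{k+1}$. Concretely, take the skew product $T(x,y)=(x+\alpha,y+2x+\alpha)$ on ${\mathbb T}^2$ and $F(x,y)=\phi(y)=\sum_m c_me(my)$: for $n\neq 0$ one computes $\Phi_F(n)=\sum_m c_m^2c_{-2m}\,e(2mn^2\alpha)$, a genuine $2$-step nilsequence, whose value at $0$ is $\sum_m c_m^2c_{-2m}$, in general strictly smaller than $\Phi_F(0)=\int\phi^3$; and a nilsequence cannot disagree with $\Phi_F$ only at $n=0$, since by recurrence its value at $0$ is a limit of its values along $n\to\infty$. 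So ``Jensen at the origin'' is applied to the wrong quantity, and the syndetic set you produce is $\{n:\psi(n)>\psi(0)-\epsilon\}$ with a possibly much smaller $\psi(0)$. A decisive sanity check: nothing in your scheme uses the length of the progression, so it would equally ``prove'' $\mu(A\cap T^nA\cap\cdots\cap T^{4n}A)>\mu(A)^5-\epsilon$ on a syndetic set, which is false by the Behrend-type counterexample of~\cite{BHK} (recalled in this paper's introduction). This is exactly why the genuine proofs are delicate at this point: \cite{BHK} must analyse the nilsequence part on $2$- and $3$-step nilsystems directly (for three steps even needing the arithmetic result in Ruzsa's appendix), while the method of the present paper avoids nilsequences altogether by comparing, after weighting by $\chi(n\alpha)$ on a Bohr set, with the correlation at $n=0$ of \emph{conditional expectations} ($I_0(f,\widehat f,\widetilde f)$, resp.\ $J_0$), which is bounded below by Lemma~\ref{ineq} rather than by Jensen applied to $\int F^{k+1}$.

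A second, repairable, gap: with $\tilde f=\E(f\mid\CZ_2)$ you only know $\nnorm{f-\tilde f}_3=0$, and your Cauchy--Schwarz passage to $(X\times X,T\times T)$ requires controlling the seminorm of $(f-\tilde f)\otimes(f-\tilde f)$ in the product system; one has $\nnorm{h\otimes h}_{3,X\times X}=\nnorm{h}_{4,X}^2$, which need not vanish when only $\E(h\mid\CZ_2)=0$ (the product system has extra invariant sets, so seminorms do not factor). Thus uniform-density-zero control of $|R(n)|$ by this route needs the projection onto $\CZ_3$ (resp.\ $\CZ_4$ for four terms), or the more careful decomposition actually proved in~\cite{BHK}. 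This part can be fixed; the lower-bound mechanism of the previous paragraph cannot, and it is where the real content of the theorem lies.
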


We recall that a subset $E$ of $\ZZ$ is said to be \textit{syndetic}
if there exists an integer $N>0$ such that $E\cap [M,M+N)\neq
\emptyset$ for every $M\in \ZZ$.

It was shown in~\cite{BHK} that an analogous result fails both if we remove the assumption of ergodicity and if for longer arithmetic progressions.

Furstenberg and Katznelson~\cite{FurK} generalized Furstenberg's
Recurrence theorem to commuting transformations. It is therefore natural to ask the question
if a result analogous to the above theorem can be established for commuting transformations. We
prove the following result regarding the case of two transformations:

\begin{theorem}\label{mmaina}
Let $(X,\CX, \mu)$ be a probability space, and $T_1, T_2$ be two commuting invertible measure preserving transformations. Assume that $(X,\CX,\mu,T_1,T_2)$ is ergodic. Let $A\in \CX$ with $\mu(A)>0$. Then for every $\epsilon>0$, the set
$$\{n\in\ZZ\colon\ \mu(A\cap T_1^n A\cap T_2^n A)> \mu(A)^4-\epsilon\}$$
is syndetic.
\end{theorem}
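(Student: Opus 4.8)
The plan is to adapt the argument of Bergelson--Host--Kra, using Host's machinery of magic systems in place of the Host--Kra structure theory for a single transformation. Put $f=1_A$, $c=\mu(A)$, and
$$a(n)=\int f\cdot T_1^nf\cdot T_2^nf\,d\mu ;$$
we must show that $E_\epsilon:=\{n\in\ZZ\colon a(n)>c^4-\epsilon\}$ is syndetic. First I would pass to a magic extension: by Host's construction there is an ergodic measure preserving system with two commuting transformations that maps onto $(X,\CX,\mu,T_1,T_2)$ and is magic for the averages $\frac1N\sum_{n=1}^N T_1^ng_1\cdot T_2^ng_2$. Replacing $A$ by its preimage changes neither $a(n)$ nor $\mu(A)$, so we may and do assume that $(X,\CX,\mu,T_1,T_2)$ is itself magic. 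In a magic system the $L^2$-limit of those averages depends on each $g_i$ only through $\E(g_i\mid\CZ)$ for a suitable factor $\CZ\subseteq\CX$ invariant under $T_1$ and $T_2$, the corresponding seminorm being exactly $g\mapsto\norm{\E(g\mid\CZ)}_2$. Telescoping the three occurrences of $f$ and using that these limits lie in $\CZ$, one obtains $\tilde f=\E(f\mid\CZ)$ with $0\le\tilde f\le1$, $\int\tilde f\,d\mu=c$, $\tilde a(n):=\int\tilde f\cdot T_1^n\tilde f\cdot T_2^n\tilde f\,d\mu$, and
$$\frac1N\sum_{n=M+1}^{M+N}\bigl|a(n)-\tilde a(n)\bigr|\longrightarrow 0\qquad\text{as }N\to\infty,\ \text{uniformly in }M\in\ZZ .$$
Thus it suffices to handle $\tilde a$.

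Next I would identify the structure of $\CZ$. Feeding the magic-system seminorm estimates into a van der Corput iteration shows that $g\mapsto\norm{\E(g\mid\CZ)}_2$ is dominated by finitely many Host--Kra seminorms of $T_1$, of $T_2$, and of $T_1T_2^{-1}$; hence $\CZ$ is contained in the join of finitely iterated Host--Kra factors of these three single transformations, and therefore, by the Host--Kra structure theorem, $\CZ$ is an inverse limit of nilsystems for the $\ZZ^2$-action. Approximating $\tilde f$ in $L^2$ (hence, since $\norm{\cdot}_\infty\le1$, the sequence $\tilde a$ uniformly) by functions carried by genuine nilsystem factors, and using that the multicorrelation sequence $n\mapsto\int h\cdot T_1^nh\cdot T_2^nh\,d\mu$ of a $\ZZ^2$-nilsystem is a nilsequence, one sees that $\tilde a$ is a uniform limit of nilsequences. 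In particular the averages $\frac1N\sum_{n=M+1}^{M+N}\tilde a(n)$ converge, uniformly in $M$, to $\bar a:=\lim_N\frac1N\sum_{n=1}^N\tilde a(n)=\int\tilde f\cdot g\,d\mu$, where $g:=\lim_N\frac1N\sum_{n=1}^N T_1^n\tilde f\cdot T_2^n\tilde f\ge0$.

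The heart of the matter is the lower bound $\bar a\ge c^4$, and I expect this to be the main obstacle. Knowing only that $\int g\,d\mu=\norm{\E(\tilde f\mid\CI(T_1^{-1}T_2))}_2^2\ge c^2$ is not sufficient, since a priori $g$ could be concentrated where $\tilde f$ is small. Instead I would, using the nil/magic structure of $\CZ$, reduce $\bar a$ to an expression built from conditional expectations of $\tilde f$ onto suitable sub-factors of $\CZ$ attached to $T_1$, $T_2$ and $T_1T_2^{-1}$, each of integral $c$, and then apply the Cauchy--Schwarz inequality twice to untangle the two directions $T_1$ and $T_2$, each application contributing one further factor of $c$ beyond the three coming from the diagonal. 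This is precisely where the exponent becomes $4$ rather than the $3$ of the length-three theorem of Bergelson--Host--Kra for a single transformation --- the ``similar but not identical'' phenomenon mentioned in the abstract; one must carry the reduction out carefully so that exactly four powers of $c$ survive.

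Granting $\bar a\ge c^4$, I would finish as in Bergelson--Host--Kra. Since $\tilde a$ is a uniform limit of nilsequences with mean $\bar a\ge c^4$, the set $V:=\{n\colon\tilde a(n)>c^4-\epsilon/2\}$ is syndetic: choose a nilsequence $\psi$ with $\norm{\tilde a-\psi}_\infty<\epsilon/8$; then $\psi$ has mean $>c^4-\epsilon/8$, so on the minimal sub-nilsystem carrying the orbit that defines $\psi$ the relevant continuous function is $>c^4-3\epsilon/8$ on a non-empty open set, whose return times are syndetic by minimality, and on them $\tilde a>c^4-\epsilon/2$. Let $L$ be a gap bound for $V$, and pick $N_0$ such that $\frac1N\sum_{n=M+1}^{M+N}|a(n)-\tilde a(n)|<\epsilon/(4L)$ for all $M$ and all $N\ge N_0$. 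Then any interval $I$ of length $N\ge\max(N_0,2L)$ contains more than $N/(2L)$ points of $V$, but, by Markov's inequality, fewer than $N/(2L)$ points $n$ with $|a(n)-\tilde a(n)|\ge\epsilon/2$; hence some $n\in V\cap I$ satisfies $|a(n)-\tilde a(n)|<\epsilon/2$, so that $a(n)>c^4-\epsilon$. Therefore $E_\epsilon$ meets every interval of length $\max(N_0,2L)$, i.e. $E_\epsilon$ is syndetic, which completes the proof.
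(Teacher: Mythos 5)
Your proposal breaks down at the structural stage, before the final counting argument. First, a single magic extension for the pair $(T_1,T_2)$ does not control all three positions of the correlation $\int f\cdot T_1^nf\cdot T_2^nf\,d\mu$: by Lemma~\ref{upbound} and its rearranged forms \eqref{equpbound2}--\eqref{equpbound3}, the three occurrences of $f$ are governed by the seminorms attached to the three \emph{different} pairs $(T_1,T_2)$, $(T_2,T_1T_2^{-1})$, $(T_1,T_1T_2^{-1})$; this is exactly why the paper builds three successive ergodic magic extensions and needs Lemma~\ref{magextlem} to transport magic-ness of intermediate factors to the top. (Also, the existence of an \emph{ergodic} magic extension is not something you can just quote: Host's extension $X^*$ is never ergodic, and the paper has to prove Theorem~\ref{th:existext} via the ergodic decomposition.) More seriously, your key structural claim --- that the characteristic factor $\CZ$ is dominated by Host--Kra seminorms of the single transformations and is therefore an inverse limit of $\ZZ^2$-nilsystems, so that $\tilde a$ is a uniform limit of nilsequences --- is unsupported and false in general. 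After the extensions, the relevant factor is $\CI(\overline T_1)\vee\CI(\overline T_2)\vee\CI(\overline T_1\overline T_2^{-1})$, which the paper identifies with a joining of three \emph{arbitrary} ergodic systems $Y_1\times Y_2\times Y_3$ (they can be Bernoulli, as in the construction of Section~\ref{sec:ex}), conditionally independent only over the common Kronecker factor $Z$ (Propositions~\ref{nudescri} and~\ref{mdescri}). There is no nilsystem structure here, so your route to syndeticity through minimal nilsystems collapses; the paper instead gets syndeticity from the Bohr set $\{n:n\alpha\in U\}$ on which $J_n$ is almost constant (Proposition~\ref{existu}), combined with Frantzikinakis's weighting by $\chi(n\alpha)$. (Incidentally, the box seminorm is not $\norm{\E(g\mid\CZ)}_2$; in a magic system it merely has the same null space as a conditional expectation.)

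Two quantitative steps are also missing. (i) Your Markov-inequality endgame needs $\frac1N\sum_{n\in[M,M+N)}|a(n)-\tilde a(n)|\to0$ uniformly in $M$, but the seminorm machinery only yields convergence to $0$ of the \emph{twisted} averages $\frac1{N-M}\sum_n e(nt)\bigl(I_n-J_n\bigr)$ (Proposition~\ref{diff}), hence of $\chi(n\alpha)$-weighted averages by a density argument; averages of absolute values do not follow (a sequence can have all its Fourier averages vanish while having modulus $1$ at every $n$), and no decomposition with an error small in uniform density is available in the commuting setting. (ii) The bound you yourself call the heart of the matter, namely the exponent-$4$ lower bound, is only a plan (``apply Cauchy--Schwarz twice''). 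In the paper this is precisely Lemma~\ref{ineq} with $k=3$, applied on $Y_1\times Y_2\times Y_3$ with the three pair $\sigma$-algebras, giving $J_0=\int g_0\,g_1\,g_2\,d\nu\ge\bigl(\int f\,d\mu\bigr)^4$; and note it is $J_0$, not a Ces\`aro mean of $\tilde a$, that must be bounded below, since syndeticity is extracted from $|J_n-J_0|<\epsilon$ on $\{n:n\alpha\in U\}$ rather than from a mean value.
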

We remark that, by the same counterexample as in~\cite{BHK}, the hypothesis of ergodicity is necessary for the theorem.

The fundamental difference with the case of a single transformation
is that the exponent $4$ can not be replaced by $3$:
\begin{theorem}\label{counterex}
For every $0<c\leq 1$, there exist a probability space $(X,\CX,\mu)$, with two commuting invertible measure preserving transformations $T_1,T_2$ such that $(X,\CX,\mu,T_1,T_2)$ is ergodic, and a measurable set $A\in \CX$, with $\mu(A)>0$, such that
$$\mu(A\cap T_1^n A\cap T_2^n A)< c\mu(A)^3$$
for every integer $n\neq 0$.
\end{theorem}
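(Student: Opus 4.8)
The plan is to construct the counterexample explicitly: for a given $c$, exhibit a concrete ergodic $\mathbb Z^2$-system $(X,\CX,\mu,T_1,T_2)$ and a concrete set $A$ for which $\mu(A\cap T_1^nA\cap T_2^nA)$ can be computed by hand and shown to be $<c\,\mu(A)^3$ for every $n\neq 0$. The guiding principle for choosing the system is that the obstruction to a ``for all $n\neq 0$'' statement is the phenomenon of near-returns: in a product of two Kronecker systems one can always find $n\neq 0$ with $T_1^n$ and $T_2^n$ simultaneously close to the identity, and then $\mu(A\cap T_1^nA\cap T_2^nA)$ is forced up close to $\mu(A)$, far above $\mu(A)^3$. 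So I would choose the system so that no power $T_i^n$, and no relevant combination such as $T_1^nT_2^{-n}$, ever returns close to the identity for $n\neq 0$; skew products over torus rotations are exactly what is needed. Concretely, take $X=\mathbb T^d$ of small dimension with Haar measure and let $T_1,T_2$ be a commuting pair of unipotent affine maps, generated by an Anzai-type skew product (for instance the map sending $(x,y)$ to $(x+\alpha,y+x)$) together with a compatible commuting partner, the linear-in-the-base terms destroying all near-returns. Two routine verifications belong here: $T_1T_2=T_2T_1$, and ergodicity of the $\mathbb Z^2$-action --- the usual Fourier argument, in which the shear forces the top frequency of an invariant function to vanish and the rotations then kill the rest.

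Next comes the choice of $A$ and the estimate. A plain ``box'' $A$ will not do: on the $n$ for which the base rotation nearly returns, the three copies $A$, $T_1^nA$, $T_2^nA$ still overlap far too much relative to $\mu(A)^3$. Instead $A$ must be adapted to the arithmetic and tuned so that, with respect to the lowest-order structure that the three points see, it behaves like a generic set. One then writes $\mu(A\cap T_1^nA\cap T_2^nA)$ as an integral of a product of three indicator functions, integrates first over the base variables, and uses that for $n\neq 0$ the dilation $x\mapsto nx$ pushes Haar measure on the circle forward to Haar measure; this decouples one of the three factors from the other two and produces, in effect, one extra factor of $\mu(A)$ beyond the naive three --- in keeping with the exponent $4$ of Theorem~\ref{mmaina} --- while the surviving two-factor interaction is controlled by an elementary shift-overlap bound. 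The outcome should be an estimate $\mu(A\cap T_1^nA\cap T_2^nA)\le C\,\mu(A)^{3+\eta}$ with absolute constants $C,\eta>0$, valid for all $n\neq 0$; choosing $A$ small enough that $C\mu(A)^\eta<c$ then finishes the proof.

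The step I expect to be the main obstacle is \emph{uniformity in $n$}. When $|n|$ is large the dilation $x\mapsto nx$ is genuinely equidistributed and the decoupling is clean; when $|n|$ is small --- comparable to the reciprocal of the scale of $A$ --- equidistribution is too weak, and one must instead argue that the relevant translates of $A$ are already disjoint. To guarantee that no $n$ falls into a bad intermediate range, I would take the rotation frequencies to be badly approximable, so that $\|n\alpha\|$ staying at least of order $1/|n|$ forces any $n$ with $\|n\alpha\|$ below the scale of $A$ to be large enough for the equidistribution argument to bite; if one skewed coordinate is not enough, one adds another, arranging that every $n\neq 0$ is handled by exactly one of the two mechanisms. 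Pinning down this dichotomy carefully, and checking that the chosen $A$ is genuinely extremal so that the exponent $3+\eta$ (rather than merely $3$) is attained, is the technical heart of the argument; everything else is bookkeeping with Fourier series and Weyl sums. This is, in outline, the analogue for two commuting transformations of the Bergelson--Host--Kra counterexample~\cite{BHK} for long progressions.
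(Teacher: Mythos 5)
Your plan misdiagnoses where the difficulty lies, and its central quantitative step does not hold up. The enemy is not near-returns: even if you arrange perfect equidistribution, a ``decoupling'' argument can at best make the three events $A$, $T_1^nA$, $T_2^nA$ behave as if independent, which gives $\mu(A\cap T_1^nA\cap T_2^nA)\approx\mu(A)^3$ --- it can never manufacture the claimed extra factor $\mu(A)^{\eta}$. Pushing Haar measure forward under $x\mapsto nx$ yields independence, i.e.\ exponent $3$; an exponent $3+\eta$ requires a \emph{fourth}, systematically negative, correlation among the three constraints, uniformly in $n$, and nothing in the sketch supplies it. Concretely, in your skew-product setting the set $\{n:\ \|n\alpha\|\ \text{small}\}$ is syndetic; for such $n$ the base constraints are essentially free, and writing $A=\{(x,y):y\in C_x\}$ fiberwise, the triple correlation is about $\int\!\int \mathbf{1}_{C_x}(y)\,\mathbf{1}_{C_x}(y+s_n(x))\,\mathbf{1}_{C_x}(y+t_n(x))\,dy\,dx$, which under \emph{ideal} joint equidistribution of the fiber shifts $(s_n(x),t_n(x))$ is of order $\mu(A)^3$ (and of order $\mu(A)^3/|B|^2\gg\mu(A)^3$ for a product set $A=B\times C$). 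So no Diophantine choice of $\alpha$ and no small/large-$n$ dichotomy can deliver $C\mu(A)^{3+\eta}$ for all $n\neq0$ by equidistribution alone, and the final step ``choose $\mu(A)$ small'' has nothing to rest on. (A further, smaller, problem: for the natural commuting partners of an Anzai map on $\mathbb T^2$, $T_1T_2^{-1}$ is a fiber rotation, so the very combination you wanted to keep away from the identity nearly returns along a Bohr set.)

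The paper's proof works by a mechanism that is combinatorial rather than Diophantine, and it is worth seeing why that sidesteps your uniformity problem entirely. Take $(Y,\nu,S)$ the $(\tfrac13,\tfrac13,\tfrac13)$-Bernoulli shift and $X=Y\times Y\times Y$ with $T_1=S\times\id\times S$, $T_2=\id\times S\times S$; this is ergodic since $S$ is weakly mixing. For a set $A$ defined by a function $f(y_0,z_0,w_0)$ of the zero coordinates, the coordinates at positions $0$ and $-n$ are \emph{exactly} independent for every $n\neq0$, so $\mu(A\cap T_1^nA\cap T_2^nA)$ equals, for all $n\neq 0$ simultaneously, the fixed ``triangle'' average $3^{-6}\sum f(i,j,k')f(i',j,k)f(i,j',k)$ over independent indices. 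An explicit table on $\{0,1,2\}^3$ makes this equal to $145/729<0.96\,\mu(A)^3$, and taking $l$-fold tensor powers drives the constant $0.96^l$ below any prescribed $c$. The point is that beating the exponent $3$ is a genuine anti-correlation phenomenon for a function of three independent variables (and, by Lemma~\ref{ineq}, it can never go below the exponent $4$); making the dynamics exactly mixing across the relevant coordinates reduces the whole problem to that finite combinatorial optimization, with no intermediate range of $n$ to worry about. If you want to salvage your approach, you would have to build this anti-correlated triangle structure into $A$ and prove it persists for every $n\neq0$ in a distal, toral system --- a much harder task than the Bernoulli construction, and one your outline does not begin.
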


However, we have the exponent $3$ for some class of systems:
\begin{theorem}\label{proda}
Let $(Y_1,\CY_1,\nu_1,S_1)$ and $(Y_2,\CY_2,\nu_2,S_2)$ be two
ergodic invertible measure preserving systems. Let $(X,\CX,\mu)$ be the product measure space
$(Y_1\times Y_2,\CY_1\otimes \CY_2, \nu_1\times\nu_2)$, and let
$T_1=S_1\times \id$, $T_2=\id\times S_2$. Let $A\in \CX$ with $\mu(A)>0$. Then for
every $\epsilon>0$, the set
$$\{n\in \ZZ\colon\ \mu(A\cap T_1^n A\cap T_2^n A)> \mu(A)^3-\epsilon\}$$
is syndetic.
\end{theorem}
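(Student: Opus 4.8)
The plan is to exploit the product structure to reduce the three-term expression to something controlled by a single transformation on each factor, where Khintchine-type estimates are available. Write a point of $X$ as $(y_1,y_2)$ and, for the set $A\subseteq Y_1\times Y_2$, consider its slices. The key observation is that
\[
\mu(A\cap T_1^n A\cap T_2^n A)=\int_{Y_1}\int_{Y_2} \mathbf 1_A(y_1,y_2)\,\mathbf 1_A(S_1^{-n}y_1,y_2)\,\mathbf 1_A(y_1,S_2^{-n}y_2)\,d\nu_2(y_2)\,d\nu_1(y_1),
\]
so that, setting $f_{y_1}(y_2)=\mathbf 1_A(y_1,y_2)$ and regarding things fiberwise, the middle factor depends only on $y_1$-translation and the third only on $y_2$-translation. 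The first move is to integrate out cleverly: for fixed $n$, define $g_n(y_1)=\int_{Y_2}\mathbf 1_A(y_1,y_2)\mathbf 1_A(y_1,S_2^{-n}y_2)\,d\nu_2(y_2)$, which is the self-correlation of the vertical slice $A_{y_1}$; then
\[
\mu(A\cap T_1^n A\cap T_2^n A)=\int_{Y_1} \Big(\int_{Y_2}\mathbf 1_A(y_1,y_2)\mathbf 1_{A}(y_1,S_2^{-n}y_2)\cdot\mathbf 1_{A_{S_1^{-n}y_1}}(y_2)\,d\nu_2\Big)d\nu_1,
\]
which is not quite a clean product, so one should instead first bound below by discarding cross terms carefully, or better, use a van der Corput / averaging argument over $n$ in a syndetic window.

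The cleaner approach I would actually pursue: use the Bergelson–Host–Kra machinery factor by factor. The quantity $\mu(A\cap T_1^n A\cap T_2^n A)$ is, by Fubini, $\int_{Y_1\times Y_2} \mathbf 1_A\cdot (S_1^{-n}\times\id)\mathbf 1_A\cdot(\id\times S_2^{-n})\mathbf 1_A\,d\mu$. Consider the function $\phi(y_1)=\nu_2(A_{y_1})$ on $Y_1$; then $\int \phi\,d\nu_1=\mu(A)$. One expects that the ``generic'' lower bound for the triple intersection is $\int_{Y_1}\phi(y_1)\cdot(\text{something comparing }A_{y_1}, A_{S_1^{-n}y_1})\,d\nu_1$, and by Cauchy–Schwarz applied in the right place, the triple correlation is at least $\big(\int \mathbf 1_A\cdot (S_1^{-n}\times\id)\mathbf 1_A\cdot(\id\times S_2^{-n})\mathbf 1_A\big)$; the heart is to show its average over a long interval of $n$-values is at least $\mu(A)^3-\epsilon$. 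For this, pass to the Kronecker-type factors: by the single-transformation theory, the relevant seminorm controlling $\limsup$ behavior is built from $T_1$ and $T_2$ separately, and because $T_1=S_1\times\id$, $T_2=\id\times S_2$ commute and act on independent coordinates, the characteristic factor for the averages $\frac1N\sum_n f\cdot T_1^n g\cdot T_2^n h$ is the product of the Kronecker factor of $(Y_1,S_1)$ and that of $(Y_2,S_2)$; on such a factor the averages can be computed explicitly via equidistribution on a product of tori.

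So the main steps, in order, are: (i) reduce to showing $\liminf_{N\to\infty}\frac1N\sum_{n=0}^{N-1}\mu(A\cap T_1^n A\cap T_2^n A)\ge \mu(A)^3$, which via a standard syndeticity argument (as in the Bergelson–Host–Kra deduction of syndeticity from positive-density lower bounds on averages, using that $\mathbf 1_A\cdot T_1^n\mathbf 1_A\cdot T_2^n\mathbf 1_A$ is bounded) gives the theorem; (ii) identify the characteristic factor for these averages as $\CZ\otimes\CZ'$ where $\CZ=\K(Y_1,S_1)$, $\CZ'=\K(Y_2,S_2)$ are the Kronecker factors, using that the two transformations live on independent coordinates so the ergodic seminorms factor; (iii) on this product of compact abelian group rotations, compute $\frac1N\sum_n \int F\cdot F(x_1-n\alpha,x_2)\cdot F(x_1,x_2-n\beta)$ by Fourier expansion and Weyl equidistribution of $n\mapsto(n\alpha,n\beta)$; (iv) apply Cauchy–Schwarz / Jensen three times, or rather a single clever application, to bound this from below by $\big(\int F\big)^3=\mu(A)^3$, exactly mimicking the Bergelson–Host–Kra lower bound computation but now in the product Kronecker system. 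The main obstacle I anticipate is step (ii)--(iii): one must be careful that although $T_1$ and $T_2$ do not commute with a single Kronecker rotation in an obvious ``diagonal'' way, the product structure makes the $2$-step (or here effectively $1$-step) nilfactor split as a tensor product, and then the key inequality $\int_{G_1\times G_2} F(x)F(x+t_1)F(x+t_2)\,dx \ge (\int F)^3$ for $t_1$ in the first factor and $t_2$ in the second — which holds because after integrating out, it reduces to $\int_{G_1}\psi_1(x_1)\psi_1(x_1+t_1)\,dx_1$-type convolution positivity combined with Hölder — must be verified and then integrated against the equidistributed orbit; handling the $\epsilon$ and the syndeticity uniformly is then routine.
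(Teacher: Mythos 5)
The central gap is in your steps (i) and (iii)--(iv). You reduce the theorem to the claim that $\liminf_N\frac1N\sum_{n<N}\mu(A\cap T_1^nA\cap T_2^nA)\ge\mu(A)^3$, and you would prove this on the product of Kronecker factors via the ``key inequality'' $\int_{G_1\times G_2}F(x)F(x+t_1)F(x+t_2)\,dx\ge(\int F)^3$ for $t_1\in G_1\times\{0\}$, $t_2\in\{0\}\times G_2$. That pointwise inequality is false: for $F$ the indicator of a small square in $\mathbb T^2$ and $t_1=(1/2,0)$ the integrand vanishes identically. The averaged version is false as well: take $Y_1=Y_2=\mathbb T$ with the same irrational rotation $\alpha$, so the limit of the unweighted averages equals $\int\!\!\int f(x,y)f(x+u,y)f(x,y+u)\,dx\,dy\,du$, which in Fourier terms is $\sum\hat f(m_1,n_1)\hat f(m_2,n_2)\hat f(m_3,n_3)$ over frequencies with $m_1+m_2+m_3=0$, $n_1+n_2+n_3=0$, $m_2+n_3=0$; choosing $f=\tfrac12+\epsilon\bigl(\cos2\pi(x+2y)-\cos2\pi(3x-y)+\cos2\pi(4x+y)\bigr)$ makes every quadratic contribution vanish and leaves a strictly negative cubic term, so the limit is strictly below $(\int f\,d\mu)^3$. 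Hence no argument along your lines (ii)--(iv) can establish the unweighted average bound, and in addition a lower bound for averages over $[0,N)$ alone would not give syndeticity of the level set anyway (one needs averages over arbitrary intervals $[M,N)$ with $N-M\to\infty$).

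What is missing is precisely the mechanism the paper uses: the bound $\mu(A)^3-\epsilon$ is only available for $n$ in a Bohr set, namely those $n$ with $n\alpha$ close to $0$ in the group $Z$ of \emph{common} eigenvalues of $Y_1$ and $Y_2$, and one therefore averages with the weight $\chi(n\alpha)$, $\chi$ a nonnegative continuous function supported near $0$ in $Z$. Lemmas~\ref{chara1}--\ref{chara2} show that, up to terms whose (twisted) averages vanish, $I_n(f,f,f)$ may be replaced by $I_n(f,\widehat f,\widetilde f)$ with $\widehat f=\E(f\mid\CZ\times\CY_2)$ and $\widetilde f=\E(f\mid\CY_1\times\CZ)$ --- note that only the second and third functions are projected, and only onto the common factor $Z$, not onto the full product of Kronecker factors as in your step (ii). For $n\alpha\in U$ this quantity is within $\epsilon/2$ of its value at $n=0$, and the only inequality needed is at $n=0$: $I_0(f,\widehat f,\widetilde f)=\int f\,\E(f\mid\CZ\times\CY_2)\,\E(f\mid\CY_1\times\CZ)\,d\mu\ge(\int f\,d\mu)^3$, which is Lemma~\ref{ineq} with $k=2$ and has nothing to do with nonzero shifts. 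Syndeticity then follows by averaging $\chi(n\alpha)\bigl(I_0(f,\widehat f,\widetilde f)-I_n(f,f,f)\bigr)$ over hypothetical long intervals avoiding the level set. Without the Bohr-set weight and the $n=0$ conditional-expectation inequality, your plan does not go through.
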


We recall a definition:
\begin{definition}
The \textit{upper Banach density} of a subset $E$ of $\ZZ^2$ is:
$$d^*(E)=\limsup_{\substack{N_1-M_1\rightarrow \infty\\N_2-M_2\rightarrow \infty}}\frac{\mid E\cap
[M_1,N_1)\times[M_2,N_2)\mid}{(N_1-M_1)\times (N_2-M_2)}\ .$$
\end{definition}

Using a variation (see~\cite{BHK}) of Multidimensional Furstenberg's Correspondence Principle~\cite{Fur} and Theorem~\ref{mmaina}, we deduce:
\begin{corollary}
 Let $E\subset \ZZ^2$ be a subset with positive upper Banach density. Then for every $\epsilon>0$, the set
$$\{n\in \ZZ\colon\ d^*(E\cap (E+(n,0))\cap (E+(0,n)))>d^*(E)^4-\epsilon\}$$
is syndetic.
\end{corollary}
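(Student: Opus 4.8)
The plan is to derive the corollary from Theorem~\ref{mmaina} through a correspondence principle; the one genuine point is that Theorem~\ref{mmaina} must be applied to an \emph{ergodic} system, since — exactly as for the one-transformation theorem of Bergelson, Host and Kra — the conclusion fails without ergodicity.

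The first step is to record the two-dimensional form of the variant of Furstenberg's correspondence principle used in~\cite{BHK}: given $E\subset\ZZ^2$ with $d^*(E)>0$, there exist an ergodic system $(X,\CX,\mu,T_1,T_2)$, with $T_1,T_2$ commuting invertible measure preserving transformations, and a set $A\in\CX$ with $\mu(A)=d^*(E)$, such that
\[
\mu\bigl(A\cap T_1^{n}A\cap T_2^{n}A\bigr)\ \le\ d^*\bigl(E\cap(E+(n,0))\cap(E+(0,n))\bigr)\qquad\text{for every }n\in\ZZ .
\]
To prove this I would follow~\cite{Fur} and~\cite{BHK}: view $1_E$ as a point $a$ of $\Omega=\{0,1\}^{\ZZ^2}$ with the two coordinate shifts $T_1,T_2$; set $A=\{x\in\Omega\colon x(0,0)=1\}$; choose rectangles $F_k=[M_1^{(k)},N_1^{(k)})\times[M_2^{(k)},N_2^{(k)})$ whose side lengths tend to infinity and along which $|E\cap F_k|/|F_k|\to d^*(E)$; and pass to a subsequence so that $\frac1{|F_k|}\sum_{v\in F_k}\delta_{(T_1,T_2)^{v}a}$ converges weak-$*$, say to $\mu_0$. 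Since the $F_k$ are Følner, $\mu_0$ is $T_1,T_2$-invariant; and since indicators of cylinders are continuous, evaluating $\mu_0$ on $A$ and on $A\cap T_1^{n}A\cap T_2^{n}A$ gives $\mu_0(A)=d^*(E)$ and the displayed inequality (the limit along the chosen rectangles is at most the $\limsup$ defining $d^*$). The only delicate point is that $\mu_0$ need not be ergodic; here I would appeal to the argument of~\cite{BHK}, which replaces $\mu_0$ by an ergodic system still carrying a set $A$ of measure $d^*(E)$ satisfying the same inequality. That step does not see the number of commuting transformations, so it transfers from~\cite{BHK} with only notational changes — and this extraction of an ergodic model is exactly where the only real difficulty lies, all of it already resolved in~\cite{BHK}.

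Granting the correspondence principle, the corollary follows at once. Applying Theorem~\ref{mmaina} to $(X,\CX,\mu,T_1,T_2)$ and $A$, for the given $\epsilon>0$ the set $\{n\in\ZZ\colon\mu(A\cap T_1^{n}A\cap T_2^{n}A)>\mu(A)^4-\epsilon\}$ is syndetic, and for each such $n$,
\[
d^*\bigl(E\cap(E+(n,0))\cap(E+(0,n))\bigr)\ \ge\ \mu(A\cap T_1^{n}A\cap T_2^{n}A)\ >\ \mu(A)^4-\epsilon\ =\ d^*(E)^4-\epsilon .
\]
Hence $\{n\in\ZZ\colon d^*(E\cap(E+(n,0))\cap(E+(0,n)))>d^*(E)^4-\epsilon\}$ contains a syndetic set, which is what was to be shown.
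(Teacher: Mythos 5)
Your proposal is correct and follows essentially the same route as the paper, which deduces the corollary by combining Theorem~\ref{mmaina} with the ergodic variant (from~\cite{BHK}) of the multidimensional Furstenberg correspondence principle; your sketch of the weak-$*$ limit construction and the appeal to the~\cite{BHK} argument to extract an ergodic model with $\mu(A)\ge d^*(E)$ and the transfer inequality for all $n$ is exactly the intended argument, only written out in more detail than the paper gives.
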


\subsection{Questions}
We address here some questions which are related to this paper and remain open.

\textbf{Question 1:} A natural question is how about the case of
three commuting transformations. We ask if there exists some integer
$s$, such that, for every ergodic system $(X,\CX, \mu,T_1,T_2,T_3)$ and every set
$A\in \CX$ with $\mu(A)>0$, the set $\{n\in\ZZ\colon \mu(A\cap T_1^n
A\cap T_2^n A\cap T_3^n A)>\mu(A)^s-\epsilon\}$ is syndetic.

\textbf{Question 2:} The Polynomial Recurrence Theorem was proved by
Bergelson and Leibman~\cite{BL}. Frantzikinakis and Kra~\cite{FK}
provided a more precise result for any family of linearly independent
integer polynomials:
\begin{theorem*}[Frantzikinakis, Kra]\label{thfk}
 Let $(X,\CX,\mu,T)$ be an invertible measure preserving system, and $p_1,\dots,p_k$ be linearly independent integer polynomials with $p_i(0)=0$ for $i=1,\dots,k$, and $A\in\CX$. Then for every $\epsilon>0$, the set
$$\{n\in \ZZ\colon\ \mu(A\cap T^{p_1(n)}A\cap\cdots\cap T^{p_k(n)}A)\geq \mu(A)^{k+1}-\epsilon\}$$
is syndetic.
\end{theorem*}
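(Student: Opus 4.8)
The plan is to derive the syndeticity from a uniform Ces\`aro lower bound, the genuine work being packed into a convergence theorem for polynomial multiple ergodic averages. Write $f=\mathbf{1}_A$ and
\[
a_n=\mu\bigl(A\cap T^{p_1(n)}A\cap\dots\cap T^{p_k(n)}A\bigr)=\int f\cdot T^{p_1(n)}f\cdots T^{p_k(n)}f\,d\mu
\]
(replacing each $p_i$ by $-p_i$ if one's sign convention demands it, which preserves both hypotheses). The first step is an elementary reduction: \emph{it suffices to show that $\tfrac1L\sum_{n=M}^{M+L-1}a_n$ converges, uniformly in $M\in\ZZ$, to a limit $\overline a\ge\mu(A)^{k+1}$.} Indeed, granting this, if $E=\{n:a_n>\mu(A)^{k+1}-\epsilon\}$ were not syndetic then $\ZZ\setminus E$ would contain intervals $[M_j,M_j+L_j)$ with $L_j\to\infty$ on which $a_n\le\mu(A)^{k+1}-\epsilon$; averaging $a_n$ over such an interval contradicts the uniform lower bound as soon as $L_j$ is large. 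Ergodicity plays no role in this step.

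The analytic heart is the convergence statement, and this is where the machinery advertised in the abstract enters. One wants $\tfrac1L\sum_{n=M}^{M+L-1}T^{p_1(n)}f\cdots T^{p_k(n)}f$ to converge in $L^2(\mu)$, uniformly in $M$; integrating against $f$ then yields the convergence of $\tfrac1L\sum a_n$. I would establish this by running Bergelson's PET (polynomial exhaustion) induction: repeated applications of the van der Corput lemma bound these averages by Host--Kra ergodic seminorms, so that a Host--Kra factor $\CZ_d$, with $d$ depending on $k$ and $\max_i\deg p_i$, is characteristic; by Host's structure theorem---and the ``magic system'' formalism is a convenient way to organize the seminorm estimates---this reduces the computation to an inverse limit of $d$-step nilsystems, where Leibman's equidistribution theorem for polynomial orbits on nilmanifolds evaluates the limit. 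It comes out as $\int_Y f^{\otimes(k+1)}\,d\mu_Y$ for a sub-nilmanifold $Y\subseteq X^{k+1}$ carrying its Haar measure; since $p_i(0)=0$, the full diagonal of $X^{k+1}$ lies in $Y$, and it is the \emph{linear independence} of the $p_i$ that forces $Y$ to be non-degenerate, ruling out the cancellations that occur for dependent families (cf.\ Theorem~\ref{counterex} for an analogous obstruction in the commuting setting) and yielding $\int_Y f^{\otimes(k+1)}\,d\mu_Y\ge\bigl(\int f\,d\mu\bigr)^{k+1}=\mu(A)^{k+1}$. In the simplest case the limit is just $\int\E(f\mid\CI)^{k+1}\,d\mu$, where $\CI$ is the invariant $\sigma$-algebra, and the bound reduces to Jensen's inequality for $t\mapsto t^{k+1}$; the lack of ergodicity is absorbed through the ergodic decomposition, with the uniformity in $M$ preserved throughout.

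The main obstacle is exactly this second step: carrying out the PET bookkeeping for an arbitrary linearly independent family and, above all, extracting from the equidistribution analysis that linear independence genuinely produces the product lower bound $\mu(A)^{k+1}$ and not some smaller quantity---this is the Frantzikinakis--Kra estimate, and the point where their hypotheses are used in full. By comparison the first step and the concluding Jensen-type inequality are soft, and once the convergence theorem is in hand the proof closes immediately.
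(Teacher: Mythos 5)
This theorem is only quoted in the paper (from~\cite{FK}, as motivation for Question~2), so your proposal has to stand on its own, and it breaks already at your first, ``soft'' step. The reduction you make --- that it suffices to prove the uniform Ces\`aro averages $\frac1L\sum_{n=M}^{M+L-1}a_n$ converge to a limit $\ge\mu(A)^{k+1}$ --- replaces the theorem by a false statement. Take $k=1$, $p_1(n)=n^2$ (a linearly independent family), $X=\ZZ/8\ZZ$ with $T$ the rotation by $1$, and $A=\{0,2,5,7\}$, so $\mu(A)=\tfrac12$. Since $n^2\bmod 8$ takes the values $0,1,4$ with frequencies $\tfrac14,\tfrac12,\tfrac14$, the averages of $a_n=\mu(A\cap T^{n^2}A)$ over any intervals of growing length converge to $\tfrac14\cdot\tfrac12+\tfrac12\cdot\tfrac18+\tfrac14\cdot 0=\tfrac{3}{16}<\mu(A)^2=\tfrac14$, while the theorem does hold here (the good set is $4\ZZ$, since $8\mid n^2$ whenever $4\mid n$). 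The same example shows that your ``simplest case'' formula $\int\E(f\mid\CI)^{k+1}\,d\mu$ for the limit is wrong (it would give $\tfrac14$): for polynomial iterates the obstruction is carried not by the invariant $\sigma$-algebra but by the rational spectrum. So no amount of work on your step 2 can close the argument as structured; plain (even uniform) Ces\`aro averages genuinely lose the theorem, and one must average along a sub-progression $m\ZZ$, or use weighted averages concentrated on a Bohr-type set exactly as this paper does with the weights $\chi(n\alpha)$ in Sections~2 and~4.

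The second step is also where your decisive claim --- that linear independence makes the limiting subnilmanifold ``non-degenerate'' and hence $\int_Y f^{\otimes(k+1)}\,d\mu_Y\ge(\int f\,d\mu)^{k+1}$ --- is asserted rather than proved, and as a statement about full averages it is refuted by the example above. The Frantzikinakis--Kra argument runs differently: for a linearly independent family with $p_i(0)=0$ the relevant characteristic structure is the rational Kronecker factor (the $\sigma$-algebra generated by functions invariant under some power $T^m$), not a general nilfactor. Writing $g$ for the conditional expectation of $\mathbf{1}_A$ on it and approximating $g$ by a $T^m$-invariant function, one uses $p_i(0)=0$ to get $m\mid p_i(mn)$ for all $i$, so along $n\in m\ZZ$ every $T^{p_i(n)}$ acts trivially on that factor up to a small error; the restricted averages then converge to $\int g^{k+1}\,d\mu+o(1)\ge\mu(A)^{k+1}+o(1)$ by convexity (the case of Lemma~\ref{ineq} with trivial $\sigma$-algebras), and syndeticity follows from averaging over $m\ZZ$. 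Linear independence enters precisely to show, via their convergence theorem for totally ergodic systems (applied on ergodic components), that no obstructions beyond the rational ones survive --- this is the content your sketch leaves open. If you want to salvage your outline, replace the uniform averages in step 1 by averages over $n\in m\ZZ$ and prove that characteristic-factor statement; that is where the hypotheses $p_i(0)=0$ and linear independence are actually spent.
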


Can this result be generalized for commuting transformations? Let
$(X,\CX, \mu)$ be a probability space, and $T_1,\dots, T_k$ be commuting invertible measure preserving transformations. Let $p_1,\dots,p_k$ be as in
the above theorem, and $A\in \CX$. It is true that the set $\{n\in
\ZZ\colon\ \mu(A\cap T_1^{p_1(n)}A\cap\cdots\cap T_k^{p_k(n)}A)\geq
\mu(A)^{k+1}-\epsilon\}$ is syndetic? Very recently, Chu, Frantzikinakis, and Host~\cite{CFH} gave an affirmative answer to this question when $p_i(n)=n^{d_i},\ i=1,\dots, k$ for distinct positive integers $d_1,\dots, d_k$.

\subsection{Conventions and notation}
\subsubsection{}
As usual, we can restrict to the case that all the probability
spaces that we deal with are standard.

In general, we write $(X,\mu)$ for a probability space, omitting the
$\sigma$-algebra. When needed, the $\sigma$-algebra of a the
probability space $(X,\mu)$ is written $\CX$.

We implicitly assume that the term ``bounded function'' means
real-valued, bounded and measurable.

If $S$ is a measure-preserving transformation of a probability space
$(X,\CX,\mu)$ then we write $\CI(S)$ for the sub$-\sigma$-algebra of
$\CX$ consisting in $S$-invariant sets.

Let $(X_1,\CX_1,\mu_1)$, $(X_2,\CX_2,\mu_2)$ be two probability
spaces. Let $f_1\in L^{\infty}(\mu_1)$, and $f_2\in
L^{\infty}(\mu_2)$, we denote by $f_1\otimes f_2$ the function on
$X_1\times X_2$ given by $f_1\otimes f_2(x_1,x_2)=f_1(x_1)f_2(x_2)$.

\subsubsection{}
Throughout this paper, by a \textit{system}, we mean a probability space
$(X,\CX,\mu)$ endowed with a single or several commuting measure
preserving invertible transformations.

For a system $(X,\mu,T_1,T_2)$, a \textit{factor} is a system
$(Y,\nu,S_1,S_2)$ and a measurable map $\pi\colon X\to Y$ such that
the image $\pi (\mu)$ of $\mu$ under $\pi$ is equal to $\nu$ and
$S_i\circ \pi=\pi\circ T_i$, $i=1,2$, $\mu$-a.e.

If $f$ is an integrable function on $X$, we write $\E(f\mid Y)$ for the function on $Y$ defined by
$$\text{for all } g\in L^{\infty}(\nu),\ \ \ \int _{X}f\cdot g\circ \pi\, d\mu=\int _{Y}\E(f|Y)\cdot g\, d\nu\ .$$

\subsubsection{}
We say that \textit{the averages of some sequence $(a_n)$ converge} to some limit $L$, and we write: 
$$\lim_{N-M\rightarrow\infty}\frac{1}{N-M}\sum_{n\in [M,N)} a_n=L$$
if the averages of $a_n$ on any sequences of intervals $[M_i,N_i)$
whose lengths $N_i-M_i$ tend to infinity converge to $L$:
$$\lim_{i\rightarrow\infty}\frac{1}{N_i-M_i}\sum_{n\in[M_i,N_i)} a_n=L.$$

For $t\in \mathbb T$, we use the standard notation $e(nt)=\exp(2\pi int)$.

\subsection{Methods}
\subsubsection{}
The first ingredient used in the proofs of Theorem~\ref{mmaina} and Theorem~\ref{proda} is a
method introduced by Frantzikinakis~\cite{F}: in order to show some sequence $I_n$
is large for $n$ in a syndetic set, we show that the average of
$I_n$ over an appropriately chosen sequence of intervals converges
to a large limit. More precisely, we introduce an ergodic rotation
$(Z,\alpha)$, and we show that for some well chosen non-negative continuous
function $\chi$ on $Z$, the weighted average of $\chi(n\alpha)I_n$
converges to a large limit. This strategy is sufficient for the proof of Theorem~\ref{proda} (Section~\ref{sec:prod}).

However, for the general case
(Theorem~\ref{mmaina}), before using the first ingredient, we need first to make some reduction. We will use more elaborated tools such as the machinery of
``magic systems" introduced recently by Host~\cite{H}. 

Ever since Tao~\cite{T} proved the norm convergence of multiple ergodic averages with several commuting transformations of the form
\begin{equation}\label{retao}
\frac{1}{N}\sum_{n=1}^N T_1^n f_1\cdot\ldots\cdot T_d^n f_d\ ,
\end{equation} several other proofs were given with different approaches by Austin ~\cite{A}, Host ~\cite{H} and Towsner~
\cite{Tow}. Among these proofs, those by Austin and Host were proceeded by building an \textit{extension} of the original system with good properties, called ``pleasant'' system (using terminology from~\cite{A}) and ``magic'' system (using terminology from~\cite{H}). We will follow this idea of building a suitable extension system. 

We first prove that every ergodic system has an ergodic magic extension. Then we use this result three times, and we get an ergodic magic extension. We consider our problem on this extension system. By using the properties of magic systems related to the convergence of multiple averages, we are reduced to consider our problem on a factor of this extension system. At last, we give a description of this factor, and we find that we are in a situation very similar to that of the product case.

We will review ``magic systems" and
the corresponding properties needed in Section~\ref{sec:pre}, and give the proof of Theorem~\ref{mmaina} in Section~\ref{sec:general}.

\subsubsection{}
Another important ingredient is the following inequality. This inequality was proved in a particular case by Atkinson, Watterson, and Moran~\cite{AWM}, and it is related to a class of inequalities studied by Sidorenko~(\cite{S1},~\cite{S2}).

\begin{lemma}\label{ineq}
Let $(X,\CX,\mu)$ be a probability space, $k\geq 1$ be an
integer, and $\CX_1,\CX_2,\dots,\CX_k$ be $k$ sub-$\sigma$-algebras
of $\CX$. For any bounded non-negative function $f$ on $X$, we have
\begin{equation}\label{eq:ineq}
\int f \cdot \prod_{i=1}^k \E(f\mid \CX_i)\,d\mu\geq (\int f
d\mu)^{k+1}\ .
\end{equation}
\end{lemma}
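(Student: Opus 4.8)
The plan is to prove the inequality by induction on $k$, peeling off one conditional expectation at a time. The base case $k=0$ (or $k=1$) is essentially Jensen's inequality: for $k=1$, since $f$ is non-negative and $\E(\,\cdot\mid\CX_1)$ is a contraction in $L^1$ that fixes integrals, one has $\int f\cdot\E(f\mid\CX_1)\,d\mu=\int \E(f\mid\CX_1)^2\,d\mu\geq\bigl(\int\E(f\mid\CX_1)\,d\mu\bigr)^2=\bigl(\int f\,d\mu\bigr)^2$, using the defining property of conditional expectation to move one factor of $f$ inside (since $\E(f\mid\CX_1)$ is $\CX_1$-measurable) and then Cauchy--Schwarz.

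For the inductive step, suppose the inequality holds for $k-1$ sub-$\sigma$-algebras and any probability space and any bounded non-negative function. Given $f$ and $\CX_1,\dots,\CX_k$, I would first condition everything on $\CX_k$: write
\begin{equation*}
\int f\cdot\prod_{i=1}^k\E(f\mid\CX_i)\,d\mu
=\int \E(f\mid\CX_k)\cdot\prod_{i=1}^{k-1}\E(f\mid\CX_i)\cdot\E(f\mid\CX_k)\,d\mu,
\end{equation*}
which is not quite in the right shape, so instead the cleaner route is: set $g=\E(f\mid\CX_k)$, a bounded non-negative function with $\int g\,d\mu=\int f\,d\mu$. Using that $\E(f\mid\CX_i)$ need not be comparable to $\E(g\mid\CX_i)$, the key maneuver is to introduce the factor $\E(f\mid\CX_k)$ as the ``outer'' function: by the defining property of $\E(\cdot\mid\CX_k)$ applied with the $\CX_k$-measurable function $\prod_{i=1}^{k-1}\E(f\mid\CX_i)$ replaced by its own conditional expectation on $\CX_k$ — this is where I must be careful, since $\prod_{i=1}^{k-1}\E(f\mid\CX_i)$ is generally not $\CX_k$-measurable. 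The correct identity is
\begin{equation*}
\int f\cdot\prod_{i=1}^k\E(f\mid\CX_i)\,d\mu
=\int \E\Bigl(f\cdot\prod_{i=1}^{k-1}\E(f\mid\CX_i)\,\Bigm|\,\CX_k\Bigr)\cdot\E(f\mid\CX_k)\,d\mu.
\end{equation*}
Now apply Cauchy--Schwarz, or more directly the pointwise inequality between the integrand and a product that lets the inductive hypothesis act. Concretely, I would combine this with the $k=1$ estimate to reduce the exponent, and iterate.

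Actually the cleanest line of attack, which I expect to be the intended one, is this: proceed by induction, and at each stage use the inequality $\int h\cdot\E(f\mid\CX_k)\,d\mu=\int\E(h\mid\CX_k)\cdot\E(f\mid\CX_k)\,d\mu\geq\frac{(\int h\,d\mu)(\int f\,d\mu)}{\ ?\ }$ — but this requires a correlation-type bound that is false in general, so the honest approach must keep the function $f$ itself (not $\E(f\mid\CX_k)$) as a ``weight'' throughout. Thus I would instead fix the outer $f$, and prove by induction on $k$ that $\int f\cdot\prod_{i=1}^k\E(f\mid\CX_i)\,d\mu\geq(\int f\,d\mu)^{k+1}$ by writing $\prod_{i=1}^k\E(f\mid\CX_i)=\E(f\mid\CX_k)\cdot\prod_{i=1}^{k-1}\E(f\mid\CX_i)$ and applying the Cauchy--Schwarz inequality in the form
\begin{equation*}
\int f\cdot\prod_{i=1}^{k}\E(f\mid\CX_i)\,d\mu
\geq\frac{\Bigl(\int f^{1/2}\cdot f^{1/2}\prod_{i=1}^{k-1}\E(f\mid\CX_i)^{1/2}\cdot\E(f\mid\CX_k)^{1/2}\cdot\cdots\Bigr)^2}{\cdots},
\end{equation*}
distributing the exponents dyadically over $k$ applications of Cauchy--Schwarz and the conditional-expectation identity, à la the Gowers--Cauchy--Schwarz argument. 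The main obstacle is precisely the bookkeeping in this dyadic splitting: one must set up the exponents so that after $k$ steps every occurrence of $\E(f\mid\CX_i)$ has been absorbed via $\int\E(f\mid\CX_i)\cdot(\CX_i\text{-measurable})=\int f\cdot(\CX_i\text{-measurable})$ and the remaining integral is a pure power of $\int f\,d\mu$. I expect the reference to Sidorenko's inequalities in the text signals that this dyadic Cauchy--Schwarz scheme is exactly what is used, and the only real work is verifying the exponent arithmetic closes to give $(\int f\,d\mu)^{k+1}$.
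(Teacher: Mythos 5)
Your base case $k=1$ is fine, but the heart of the argument — the inductive/iterative step — is never actually carried out, and as sketched it does not close. Your displayed ``Cauchy--Schwarz in the form'' has unspecified numerator and denominator, and the claim that ``the exponent arithmetic closes'' is exactly the point that needs proof. Concretely, two obstacles block the route you describe. First, any attempt to peel off $\E(f\mid\CX_k)$ by Cauchy--Schwarz produces a term such as $\int f\,\E(f\mid\CX_1)\cdots\E(f\mid\CX_{k-1})/\E(f\mid\CX_k)\,d\mu$, which you cannot control: the product $\prod_{i<k}\E(f\mid\CX_i)$ is not $\CX_k$-measurable, so conditioning on $\CX_k$ does not normalize it, and this quantity can exceed $\int f\,d\mu$ (you yourself note that the correlation-type bound needed here ``is false in general'', but you never replace it with a valid step). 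Second, a genuinely dyadic Cauchy--Schwarz scheme produces exponents that are powers of $2$, whereas the target exponent is $k+1$; already for $k=2$ (the case the paper uses in Section 2) two applications of Cauchy--Schwarz give a bound with exponents $(2,4,4)$ rather than the symmetric $(3,3,3)$, so the bookkeeping cannot ``close to give $(\int f\,d\mu)^{k+1}$'' without in effect reproving Hölder with exponent $k+1$ — which is the tool you should be using directly.

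What your proposal is missing is the one identity on which the paper's proof turns: after reducing to $f\ge\epsilon>0$ (apply the result to $f+\epsilon$ and let $\epsilon\to0$), one has $\int \frac{f}{\E(f\mid\CX_i)}\,d\mu=1$ for each $i$, because $\E\bigl(f/\E(f\mid\CX_i)\mid\CX_i\bigr)=1$. With this in hand there is no induction at all: write
\begin{equation*}
f=\Bigl(f\cdot\prod_{i=1}^k \E(f\mid\CX_i)\Bigr)^{1/(k+1)}\cdot\prod_{i=1}^k\Bigl(\frac{f}{\E(f\mid\CX_i)}\Bigr)^{1/(k+1)},
\end{equation*}
integrate, and apply Hölder's inequality once with $k+1$ factors and exponent $k+1$; the $k$ ratio factors each contribute $1$, and the remaining factor is the left-hand side of the desired inequality. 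Your ``absorption'' identity $\int\E(f\mid\CX_i)\cdot h\,d\mu=\int f\cdot h\,d\mu$ for $\CX_i$-measurable $h$ is not enough by itself — the normalized ratios $f/\E(f\mid\CX_i)$, and the reduction making the division legitimate, are what make the argument work.
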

\begin{proof}
We can restrict to the case that the function $f$ is bounded below
by some positive constant $\epsilon$. Indeed, for the general case,
it suffices to apply the inequality to the function $f+\epsilon$ and
to take the limit of both sides when $\epsilon$ tends to $0$.

We write
\begin{equation}\label{eq:ineq1}
f=\Big(f\cdot\prod_{i=1}^k \E(f\mid
\CX_i)\Big)^{1/k+1}\cdot\prod_{i=1}^k\Big(\frac{f}{\E(f\mid\CX_i)}\Big)^{1/k+1}\
.\end{equation} Let $J$ be the integral on the left hand side
of~\eqref{eq:ineq}. By the H\"{o}lder Inequality and~\eqref{eq:ineq1},
\begin{equation}
\big(\int f d\mu\big)^{k+1}\leq  J\cdot \prod_{i=1}^k\int\frac{f}{\E(f\mid
\CX_i)}d\mu\ .
\end{equation}
On the other hand, for $1\leq i\leq k$,
$$\int \frac{f}{\E(f\mid\CX_i)}d\mu=1\ ,$$
and this proves the inequality.
\end{proof}

\subsection*{Acknowledgement}
The author would like to thank her advisor, Bernard Host, for many helpful discussions and suggestions.

\section{Proof of Theorem~\ref{proda}}\label{sec:prod}
In this section, we assume that $(Y_1,\nu_1,S_1)$ and
$(Y_2,\nu_2,S_2)$ are two ergodic systems, and that
$(X,\mu,T_1,T_2)=(Y_1\times Y_2,\nu_1\times\nu_2,S_1\times
\id,\id\times S_2)$. Then $(X,\mu,T_1,T_2)$ is ergodic.

For every $n$, and all bounded functions $\eta,\varphi,\psi$ on $X$, we
define
\begin{eqnarray*}
I_n(\eta,\varphi,\psi) :&=&\int \eta \cdot T_1^n \varphi\cdot T^n_2
\psi\, d\mu
\\&=&\int \eta(y_1,y_2)\varphi(S_1^n y_1,y_2)\psi(y_1, S_2^n y_2)\, d\nu_1(y_1)d\nu_2(y_2)\ .
\end{eqnarray*}

We prove:
\begin{theorem}\label{prodf}
Let $0\leq f\leq 1$. Then for every $\epsilon>0$, the set
$$\{n\in\ZZ\colon\ I_n(f,f,f)> (\int f d\mu)^3-\epsilon\}$$
is syndetic.
\end{theorem}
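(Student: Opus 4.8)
The plan is to follow the strategy of Frantzikinakis described in the introduction: rather than bounding $I_n(f,f,f)$ pointwise for a syndetic set of $n$, we introduce an auxiliary ergodic rotation $(Z,\alpha)$, pick a suitable non-negative continuous weight $\chi$ on $Z$, and show that the weighted average
\[
\lim_{N-M\to\infty}\frac1{N-M}\sum_{n\in[M,N)}\chi(n\alpha)\,I_n(f,f,f)
\]
is at least $\bigl(\int\chi\,dm_Z\bigr)\cdot\bigl(\int f\,d\mu\bigr)^3$, where $m_Z$ is Haar measure on $Z$. Once this is established, a standard argument (as in~\cite{F}) gives that $I_n(f,f,f)>(\int f\,d\mu)^3-\epsilon$ on a syndetic set: if it failed, the complement would contain arbitrarily long intervals, and by choosing $(Z,\alpha)$ and $\chi$ appropriately (concentrating the mass of $\chi(n\alpha)$ on a long interval) we would contradict the lower bound on the average.

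The heart of the matter is computing the limit of the weighted averages. Here I would exploit the product structure $X=Y_1\times Y_2$, $T_1=S_1\times\id$, $T_2=\id\times S_2$. Write $f=f(y_1,y_2)$. Then
\[
I_n(f,f,f)=\int f(y_1,y_2)\,f(S_1^n y_1,y_2)\,f(y_1,S_2^n y_2)\,d\nu_1(y_1)\,d\nu_2(y_2).
\]
For fixed $y_2$, the inner behavior in $y_1$ is governed by a single transformation $S_1$, and similarly in $y_2$ by $S_2$; since the two "shifted" factors act on disjoint coordinates, the relevant averaging reduces — via the ergodic theorem applied in each coordinate separately — to projections onto invariant $\sigma$-algebras. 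Concretely, I expect that after averaging over $n$ (weighted by $\chi(n\alpha)$, with $Z$ taken to be, say, the Kronecker factor or simply handled by a separate ergodic-theorem argument with the rotation adjoined), the limit becomes an integral of the form
\[
\int_Z\chi\,dm_Z\cdot\int f\cdot\E(f\mid \CA_1)\cdot\E(f\mid \CA_2)\,d\mu,
\]
where $\CA_1$ and $\CA_2$ are the sub-$\sigma$-algebras of $\CX$ corresponding to $\CI(S_1)\otimes\CY_2$ and $\CY_1\otimes\CI(S_2)$ respectively (or the appropriate enlargements by the rotation factor). The precise identification of these conditional expectations — making sure the rotation weight factors out cleanly and that the two commuting shifts genuinely decouple into conditional expectations onto two sub-$\sigma$-algebras — is the main technical step.

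Granting that, Lemma~\ref{ineq} with $k=2$ applied to the bounded non-negative function $f$ and the two sub-$\sigma$-algebras $\CA_1,\CA_2$ gives immediately
\[
\int f\cdot\E(f\mid\CA_1)\cdot\E(f\mid\CA_2)\,d\mu\geq\Bigl(\int f\,d\mu\Bigr)^3,
\]
which is exactly the lower bound we need (with the factor $\int_Z\chi\,dm_Z$ carried along). The main obstacle, as indicated, is the second step: setting up the auxiliary rotation correctly and justifying the convergence and the decoupling of the weighted averages into the "diagonal" integral against $\E(f\mid\CA_1)\cdot\E(f\mid\CA_2)$. The product structure should make this considerably more transparent than in the general case of Theorem~\ref{mmaina}, since we never need magic extensions here — the two transformations already act on independent factors, so the characteristic factors are simply the Kronecker/invariant factors in each coordinate.
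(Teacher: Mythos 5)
Your overall skeleton (weighted averages along an ergodic rotation, the inequality of Lemma~\ref{ineq} with $k=2$, and the final non-syndeticity contradiction) is the same as the paper's, but the step you yourself flag as ``the main technical step'' is exactly where your proposal goes wrong, and it is not a mere technicality. You claim that averaging in $n$ decouples, ``via the ergodic theorem applied in each coordinate separately,'' into the conditional expectations $\E(f\mid \CI(S_1)\otimes\CY_2)$ and $\E(f\mid\CY_1\otimes\CI(S_2))$; since $S_1$ and $S_2$ are ergodic these are just the marginals of $f$, i.e.\ you are asserting that the limit of the averages of $I_n(f,f,f)$ is $\int f(y_1,y_2)\bigl(\int f(u,y_2)\,d\nu_1(u)\bigr)\bigl(\int f(y_1,v)\,d\nu_2(v)\bigr)\,d\mu$. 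This identification is false whenever $S_1$ and $S_2$ have nontrivial common eigenvalues. The point is that after writing $f$ as a sum of product functions, the shifted part of the integrand is $(\alpha\otimes\beta)(S_1^ny_1,S_2^ny_2)$, so the ergodic theorem must be applied to the \emph{product} transformation $S_1\times S_2$, whose invariant $\sigma$-algebra is not trivial: it is contained in $\CZ\times\CZ$, where $\CZ$ is the factor generated by the \emph{common} eigenvalues (Lemma~\ref{claimprod}). A concrete counterexample to your decoupling: take $Y_1=Y_2=\mathbb T$, $S_1=S_2$ an irrational rotation, and $f(y_1,y_2)=g(y_1-y_2)$ with $g=\mathbf 1_{[0,\delta)}$; then the averages of $I_n(f,f,f)$ converge to $\int g(s)(g*g)(2s)\,ds=\delta^2/2$, not to $(\int g)^3=\delta^3$.

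Because of this, the correct characteristic projections are onto $\CZ\times\CY_2$ and $\CY_1\times\CZ$ (the paper's $\widehat f$, $\widetilde f$), not onto your $\CA_1,\CA_2$, and the auxiliary rotation cannot be ``the Kronecker factor or a rotation adjoined'' in an unspecified way: it must be precisely this common-eigenvalue factor $(Z,\alpha)$, and one needs the twisted statement (Lemma~\ref{chara2}) that the averages of $e(nt)\bigl(I_n(f,f,f)-I_n(f,\widehat f,\widetilde f)\bigr)$ vanish for every \emph{common} eigenvalue $e(t)$ — this works because an eigenfunction of $Y_1$ for a common eigenvalue can be absorbed into the functions, and it is what lets the weight $\chi(n\alpha)$ (approximable by characters of $Z$) pass through. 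Then one does not identify the weighted limit exactly; one only shows it is within $\epsilon/2$ of $I_0(f,\widehat f,\widetilde f)$ by choosing $\chi$ supported on a small neighborhood $U$ of $0$ in $Z$ and using $L^2$-continuity of translation, and finally applies Lemma~\ref{ineq} with $\CX_1=\CZ\times\CY_2$, $\CX_2=\CY_1\times\CZ$ to get $I_0(f,\widehat f,\widetilde f)\geq(\int f\,d\mu)^3$. Your final inequality step would indeed apply to any pair of sub-$\sigma$-algebras, but without the correct identification of which projections are characteristic for the weighted averages, the lower bound on the average of $I_n(f,f,f)$ — the heart of the proof — is not established.
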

Theorem~\ref{proda} follows from Theorem~\ref{prodf} with
$f=\mathbf{1}_A$.

\subsection{An ergodic rotation}
Let $(Z,\CZ,\theta,R)$ denote the common factor of of systems $Y_1$ and
$Y_2$ spanned by the eigenfunctions corresponding to the common
eigenvalues of these two systems. 

We recall that $Z$ is a compact abelian group, endowed
with a Borel $\sigma$-algebra $\CZ$ and Haar measure $\theta$. $R$
is the translation by some fixed $\alpha\in Z$. We consider $\CZ$ as
a sub-$\sigma$-algebra of both $(Y_1,\CY_1)$ and $(Y_2,\CY_2)$.

For a bounded function $\eta$ on $X$, we write
$$
\widehat{\eta}\colon=\E(\eta\mid\CZ\times \CY_2)\ \text{and}\ \widetilde{\eta}\colon=\E(\eta\mid\CY_1\times \CZ)\ .
$$

We begin with a classical lemma (see for example~\cite{Fur}, Lemma 4.18):
\begin{lemma}\label{claimprod}
$$\CI(S_1\times S_2)\subset \CZ\times \CZ.$$
\end{lemma}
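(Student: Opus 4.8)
The plan is to deduce the lemma from the stronger assertion about functions: \emph{every $F\in L^2(\nu_1\times\nu_2)$ which is invariant under $S_1\times S_2$ is measurable with respect to $\CZ\times\CZ$.} Granting this and applying it to $F=\mathbf 1_B$ for an arbitrary $B\in\CI(S_1\times S_2)$ shows that $B$ coincides, modulo $\nu_1\times\nu_2$, with a set in $\CZ\times\CZ$, which is exactly the statement. To prove the assertion I would identify $L^2(\nu_1\times\nu_2)$ with the space of Hilbert--Schmidt operators from $L^2(\nu_2)$ to $L^2(\nu_1)$ via the kernel map $F\mapsto A_F$, where $A_F h(y_1)=\int F(y_1,y_2)h(y_2)\,d\nu_2(y_2)$. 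Writing $U_1,U_2$ for the unitary operators induced by $S_1,S_2$, a short change-of-variables computation shows that the invariance $F\circ(S_1\times S_2)=F$ is equivalent to the intertwining identity $U_1A_F=A_FU_2$, whence also $U_2A_F^*=A_F^*U_1$.

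Next I would use compactness. Since $A_F$ is Hilbert--Schmidt, $A_F^*A_F$ is a compact positive operator on $L^2(\nu_2)$ which, by the intertwining identity, commutes with $U_2$. Hence each nonzero eigenspace of $A_F^*A_F$ is finite-dimensional and $U_2$-invariant, so $U_2$ restricts to it as a diagonalizable unitary; choosing an orthonormal eigenbasis of $U_2$ inside each such eigenspace yields a singular value decomposition $F=\sum_k s_k\,f_k\otimes\overline{e_k}$, convergent in $L^2(\nu_1\times\nu_2)$, in which $U_2e_k=\lambda_ke_k$ and $f_k=s_k^{-1}A_Fe_k$. The crucial point is then that $U_1f_k=s_k^{-1}A_FU_2e_k=\lambda_kf_k$, so each $f_k$ is a nonzero eigenfunction of $S_1$ with the same eigenvalue $\lambda_k$ as $e_k$; therefore every $\lambda_k$ occurring is a \emph{common} eigenvalue of $S_1$ and $S_2$.

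Finally I would invoke the ergodicity of $Y_1$ and of $Y_2$: for an ergodic system the eigenspace attached to a given eigenvalue is one-dimensional, so $e_k$ is a scalar multiple of the distinguished eigenfunction of $S_2$ associated with $\lambda_k$, hence $\CZ$-measurable on $Y_2$; likewise $f_k$ is $\CZ$-measurable on $Y_1$. Consequently each $f_k\otimes\overline{e_k}$ is $\CZ\times\CZ$-measurable, and so is their $L^2$-limit $F$. This proves the assertion, and hence the lemma. Note that only the ergodicity of the factors $Y_1,Y_2$ is used, not that of the product system, in agreement with the hypotheses.

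The step I expect to demand the most care is producing the singular value decomposition with \emph{eigenfunction} right factors: one must check that $A_F^*A_F$, not merely $A_F$, commutes with $U_2$, that the per-eigenspace diagonalizations of $U_2$ assemble into a single orthonormal system, and that the resulting series converges to $F$ in $L^2$ rather than only weakly. The remaining ingredients — the Hilbert--Schmidt identification, the derivation of the intertwining relation, the matching of eigenvalues, and the one-dimensionality of eigenspaces for ergodic systems — are routine.
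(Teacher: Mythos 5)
Your proof is correct, but it takes a genuinely more self-contained route than the paper. The paper's proof quotes the classical fact (Furstenberg's book, Lemma 4.18) that every $S_1\times S_2$-invariant function on $Y_1\times Y_2$ is measurable with respect to $\K_1\times\K_2$, the product of the Kronecker factors, and then expands such a function in the product eigenbasis $f_i\otimes\bar g_j$; invariance forces the coefficients to vanish unless the two eigenvalues agree, so only common eigenvalues survive, which gives $\CZ\times\CZ$-measurability. You instead prove the whole statement in one stroke with the Hilbert--Schmidt/compact-operator argument: the intertwining relation $U_1A_F=A_FU_2$, the $U_2$-equivariant singular value decomposition coming from the compact operator $A_F^*A_F$, and the transport of eigenvalues through $A_F$ produce directly an $L^2$-expansion of the invariant function into tensor products of eigenfunctions with \emph{matching} eigenvalues --- in effect you reprove the black-boxed Kronecker-product fact rather than citing it, since that fact is classically established by exactly this kind of compactness argument. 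The endgame is the same in both proofs: ergodicity of $Y_1$ and $Y_2$ makes each eigenspace one-dimensional, so eigenfunctions attached to common eigenvalues are $\CZ$-measurable, and measurability passes to the $L^2$-limit (a point the paper leaves implicit and you make explicit). What your approach buys is independence from the quoted lemma and a transparent mechanism for why only common eigenvalues occur; what the paper's buys is brevity, the analytic content being delegated to a standard reference. The delicate steps you flagged --- commutation of $A_F^*A_F$ with $U_2$, assembling the per-eigenspace diagonalizations into one orthonormal system, and $L^2$-convergence of the kernel series via the Hilbert--Schmidt isometry --- all check out, so there is no gap.
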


\begin{proof}
For $i=1,2$, let $\K_i$ be the Kronecker factor of $(Y_i,S_i)$. It is
known that any $S_1\times S_2$-invariant function on $Y_1\times Y_2$
is measurable with respect to $\K_1\times \K_2$.

Let $\{f_i\}$ (resp. $\{g_j\}$) be an orthonormal basis of
$L^2(\K_1,\nu_1)$ (resp. $L^2(\K_2,\nu_2)$) consisting of
eigenfunctions of $S_1$ (resp. $S_2$), then the set $\{f_i\otimes
\bar g_j\}$ is an orthonormal basis of $L^2(\K_1\times
\K_2,\nu_1\times\nu_2)$.

Any $F\in L^{\infty}(Y_1\times Y_2)$ such that $F$ is invariant
under $S_1\times S_2$ can be written as
$$F=\sum_{i,j} c_{i,j}f_i\otimes \bar g_j.$$ for some constants
$c_{i,j}$ with $\sum_{i,j}|c_{i,j}|^2\leq \infty$. Write
$S_1f_i=e(t_i)f_i$, $S_2g_j=e(s_j)g_j$. By invariance of $F$ under
$S_1\times S_2$, we have
$$\sum_{i,j} c_{i,j}f_i\otimes \bar g_j=\sum_{i,j} c_{i,j}e(t_i-s_j)f_i\otimes \bar g_j.$$
Hence $c_{i,j}\neq 0$ only when $t_i=s_j$. This completes the proof.
\end{proof}

\begin{lemma}\label{chara1}
Let $\eta,\varphi,\psi$ be bounded functions on $X$. Then the averages
of the difference
$I_n(\eta,\varphi,\psi)-I_n(\eta,\widehat{\varphi},\widetilde{\psi})$ converge to
$0$.
\end{lemma}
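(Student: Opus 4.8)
The statement claims that the averages of $I_n(\eta,\varphi,\psi)-I_n(\eta,\widehat\varphi,\widetilde\psi)$ tend to $0$, where $\widehat\varphi = \E(\varphi\mid\CZ\times\CY_2)$ and $\widetilde\psi = \E(\psi\mid\CY_1\times\CZ)$. I would prove this in two steps, replacing $\varphi$ by $\widehat\varphi$ first and then $\psi$ by $\widetilde\psi$, using the telescoping identity
\[
I_n(\eta,\varphi,\psi)-I_n(\eta,\widehat\varphi,\widetilde\psi)
= I_n(\eta,\varphi-\widehat\varphi,\psi) + I_n(\eta,\widehat\varphi,\psi-\widetilde\psi).
\]
So it suffices to show that if $\varphi' := \varphi-\widehat\varphi$ satisfies $\E(\varphi'\mid\CZ\times\CY_2)=0$, then the averages of $I_n(\eta,\varphi',\psi)$ vanish, and symmetrically for the second term with a function $\psi'$ with $\E(\psi'\mid\CY_1\times\CZ)=0$.

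\textbf{Main step.} For the first term, write $I_n(\eta,\varphi',\psi)=\int \eta\cdot T_1^n\varphi'\cdot T_2^n\psi\,d\mu$. The idea is to use the van der Corput lemma (or a direct $L^2$ estimate), but the cleaner route here is to exploit the product structure. Fix $y_2$ and think of $\eta(\cdot,y_2)$, $\varphi'(\cdot,y_2)$ as functions on $Y_1$; since $T_2^n\psi(y_1,y_2)=\psi(y_1,S_2^n y_2)$ does not involve $S_1$, we have
\[
I_n(\eta,\varphi',\psi)=\int\!\!\int \eta(y_1,y_2)\,\varphi'(S_1^n y_1,y_2)\,\psi(y_1,S_2^ny_2)\,d\nu_1(y_1)\,d\nu_2(y_2).
\]
By the spectral theorem / the fact that $\CZ$ is the Kronecker-type factor built from common eigenvalues, I would argue that the relevant averaging over $n$ of $S_1^n\varphi'(\cdot,y_2)$ tested against the other factors is controlled by the projection of $\varphi'(\cdot,y_2)$ onto the appropriate invariant-under-$S_1\times R$ subspace; here the role of Lemma~\ref{claimprod} (and the construction of $Z$ as the common factor) is to identify that subspace and to see that the hypothesis $\E(\varphi'\mid\CZ\times\CY_2)=0$ kills it. Concretely: apply van der Corput to the function $n\mapsto T_1^n\varphi'\cdot T_2^n\psi$ in $L^2(\mu)$, which reduces the problem to showing that the averages of $\int T_1^n(\varphi'\cdot T_1^{-n}\overline{T_1^n\varphi'})\cdots$ — i.e. of correlations $\langle (S_1\times S_2)^n (\varphi'\otimes\text{stuff}),\ \cdot\rangle$ — vanish, and then Lemma~\ref{claimprod} says the limiting average projects onto $\CZ\times\CZ$, on which $\varphi'$ integrates to zero in the first coordinate.

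\textbf{The obstacle.} The delicate point is the bookkeeping in the van der Corput step: after the first difference one obtains averages over $h$ and $n$ of quantities like $\int \overline{\varphi'}\cdot S_1^h\varphi'\cdot(\text{functions of }S_2)\,d\mu$, and one must pass the limit over $n$ inside (invoking the ergodic theorem for $S_1\times S_2$ and Lemma~\ref{claimprod}), then handle the outer average over $h$. The subtlety is to make sure that the $\CZ$-measurability obtained in the first coordinate is exactly what is annihilated by $\E(\varphi'\mid\CZ\times\CY_2)=0$ — this requires knowing that the $S_1$-eigenfunctions surviving the correlation with $S_2$-eigenfunctions are precisely those with eigenvalues in the common spectrum, hence $\CZ$-measurable, which is the content of Lemma~\ref{claimprod} together with the definition of $Z$. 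For the second term $I_n(\eta,\widehat\varphi,\psi')$ the argument is symmetric, reversing the roles of $T_1,T_2$ and of the two coordinates. Putting the two bounds together gives that the averages of $I_n(\eta,\varphi,\psi)-I_n(\eta,\widehat\varphi,\widetilde\psi)$ converge to $0$.
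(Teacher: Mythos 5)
Your telescoping decomposition and your identification of Lemma~\ref{claimprod} as the key input are correct, but the proof is incomplete exactly at the decisive point, and the route you choose for it does not work as written. You reduce matters to the claim that the averages of $I_n(\eta,\varphi',\psi)$ vanish when $\E(\varphi'\mid\CZ\times\CY_2)=0$, and then you never prove it: you switch to a van der Corput argument whose reduction is only gestured at and whose crucial verification you yourself flag as an unresolved ``obstacle''. Moreover the van der Corput step as displayed is incoherent (the expression $\varphi'\cdot T_1^{-n}\overline{T_1^n\varphi'}$ collapses to $|\varphi'|^2$), and the correlation terms one actually gets are averages over $h$ of $\int T_1^n(\varphi'\cdot T_1^h\varphi')\cdot T_2^n(\psi\cdot T_2^h\psi)\,d\mu$. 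For these, the function sitting in the first coordinate is the correlation $\varphi'\cdot T_1^h\varphi'$, not $\varphi'$, so the hypothesis $\E(\varphi'\mid\CZ\times\CY_2)=0$ does not simply ``integrate to zero in the first coordinate''; killing the limit of those terms would require an additional spectral computation over the common eigenvalues that you have not carried out. So the step where the hypothesis must actually be used is missing.

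The fix is the direct route you mention and then abandon: no van der Corput is needed, because $T_1=S_1\times\id$ and $T_2=\id\times S_2$ put all the $n$-dependence into a single orbit of $S_1\times S_2$. After a density reduction to product functions $\eta=\alpha_1\otimes\beta_1$, $\varphi=\alpha_2\otimes\beta_2$, $\psi=\alpha_3\otimes\beta_3$ (which also handles the conditional expectations, since $\widehat{\varphi}=\E(\alpha_2\mid\CZ)\otimes\beta_2$ and $\widetilde{\psi}=\alpha_3\otimes\E(\beta_3\mid\CZ)$), one has
$$
I_n(\eta,\varphi,\psi)-I_n(\eta,\widehat{\varphi},\widetilde{\psi})=\int H\cdot (S_1\times S_2)^n K\, d(\nu_1\times\nu_2)\ ,
$$
where $H=(\alpha_1\alpha_3)\otimes(\beta_1\beta_2)$ and $K=\alpha_2\otimes\beta_3-\E(\alpha_2\mid\CZ)\otimes\E(\beta_3\mid\CZ)$. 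The mean ergodic theorem shows the averages converge to the pairing of $H$ with $\E(K\mid\CI(S_1\times S_2))$, and writing $K=(\alpha_2-\E(\alpha_2\mid\CZ))\otimes\beta_3+\E(\alpha_2\mid\CZ)\otimes(\beta_3-\E(\beta_3\mid\CZ))$, Lemma~\ref{claimprod} (that is, $\CI(S_1\times S_2)\subset\CZ\times\CZ$) makes both conditional expectations vanish. This short computation, which is the paper's proof, is exactly what your sketch circles around without completing.
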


\begin{proof}
Without loss of generality, we can assume that $\max\{|\eta|,|\varphi|,|\phi|\}\leq 1$.
By a density argument, we can suppose that
$\eta(y_1,y_2)=\alpha_1(y_1)\beta_1(y_2)$,
$\varphi(y_1,y_2)=\alpha_2(y_1)\beta_2(y_2)$, and that
$\psi(y_1,y_2)=\alpha_3(y_1)\beta_3(y_2)$ for some bounded functions
$\alpha_i$ on $Y_1$ and $\beta_i$ on $Y_2$, $i=1,2,3$, all of them being bounded by $1$ in absolute value. Using the notation introduced above, we have
$$\widehat \varphi(y_1,y_2)=\E(\alpha_2\mid\CZ)(y_1)\cdot\beta_2(y_2)\text{ and }\widetilde \psi(y_1,y_2)=\alpha_3(y_1)\cdot\E(\beta_3\mid\CZ)(y_2)\ .$$
For every $n$,
\begin{align*}
I_n(\eta,\varphi,\psi)-I_n(\eta,\widehat{\varphi},\widetilde{\psi})=\int (\alpha_1\alpha_3)(y_1)\cdot (\beta_1\beta_3)(y_2) \cdot \Big(\alpha_2(S_1^n y_1)\cdot\beta_3(S_2^n y_2)\\-\E(\alpha_2\mid\CZ)(S_1^n y_1)\cdot\E(\beta_3\mid\CZ)(S_2^n y_2)\Big)\, d\nu(y_1)d\nu(y_2)\ .
\end{align*}
By the Ergodic Theorem, the averages of this expression converge to  
\begin{eqnarray*}
 \E\Big( (\alpha_2-\E(\alpha_2\mid\CZ))\otimes \beta_3\mid \CI(S_1\times S_2)\Big)+\\ \E\Big( \E(\alpha_2\mid\CZ)\otimes (\beta_3-\E(\beta_3\mid\CZ))\mid \CI(S_1\times S_2)\Big)\ .
\end{eqnarray*}
By Lemma~\ref{claimprod}, this is equal to $0$. This completes
the proof.
\end{proof}

\begin{lemma}\label{chara2}
For every common eigenvalue $e(t)$ of
$Y_1$ and $Y_2$, the averages of the difference
$$e(nt)\big(I_n(f, f, f)-I_n(f,\widehat
f,\widetilde f)\big)$$ converge to $0$.
\end{lemma}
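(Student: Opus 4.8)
The plan is to deduce this weighted statement from the unweighted Lemma~\ref{chara1} by absorbing the factor $e(nt)$ into a modification of the three functions, using the eigenfunction attached to the eigenvalue $e(t)$.

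First I would fix the eigenfunction. Since $(Y_1,\nu_1,S_1)$ is ergodic and $e(t)$ is one of its eigenvalues, there is a measurable $\phi\colon Y_1\to\{z:|z|=1\}$, unique up to a multiplicative constant, with $S_1\phi=e(t)\phi$; by the very definition of the common factor $Z$, this $\phi$ is $\CZ$-measurable. Regarding $\phi$ as a function on $X$ that depends only on the first coordinate, one has $\phi(S_1^n y_1)=e(nt)\phi(y_1)$ and hence $\overline{\phi(y_1)}\,\phi(S_1^n y_1)=e(nt)$ for every $n\in\ZZ$ (a.e.).

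Next I would introduce the (complex-valued) bounded functions on $X$
$$\eta'=\overline{\phi}\,f,\qquad\varphi'=\phi\,f,\qquad\psi'=f .$$
Substituting into the definition of $I_n$ and using $\overline{\phi(y_1)}\,\phi(S_1^n y_1)=e(nt)$ shows at once that $I_n(\eta',\varphi',\psi')=e(nt)\,I_n(f,f,f)$. For the second identity, since $\phi$ depends only on $y_1$ and is $\CZ$-measurable it is $\CZ\times\CY_2$-measurable, so the pull-out property of conditional expectation gives $\widehat{\varphi'}=\E(\phi f\mid\CZ\times\CY_2)=\phi\,\widehat f$, while trivially $\widetilde{\psi'}=\widetilde f$. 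Plugging these in and using again $|\phi|=1$ and $\phi(S_1^n y_1)=e(nt)\phi(y_1)$, one gets $I_n(\eta',\widehat{\varphi'},\widetilde{\psi'})=e(nt)\,I_n(f,\widehat f,\widetilde f)$. Subtracting,
$$e(nt)\bigl(I_n(f,f,f)-I_n(f,\widehat f,\widetilde f)\bigr)=I_n(\eta',\varphi',\psi')-I_n(\eta',\widehat{\varphi'},\widetilde{\psi'}) .$$

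Finally I would apply Lemma~\ref{chara1}. It is stated for real functions, so I would either remark that its proof (density reduction plus the ergodic theorem plus Lemma~\ref{claimprod}) goes through verbatim for complex-valued bounded functions, or, to stay inside the stated form, write $\phi=\phi_r+i\phi_i$ with $\phi_r,\phi_i$ real bounded $\CZ$-measurable functions and expand the right-hand side above, by trilinearity of $I_n$ and linearity of $\eta\mapsto\widehat\eta$ and $\eta\mapsto\widetilde\eta$, as a finite linear combination with complex coefficients of differences $I_n(\eta,\varphi,\psi)-I_n(\eta,\widehat\varphi,\widetilde\psi)$ whose arguments are real bounded functions from $\{f,\phi_r f,\phi_i f\}$; each such average tends to $0$ by Lemma~\ref{chara1}, hence so does the one in the claim. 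The only step that is not pure bookkeeping is the identity $\widehat{\varphi'}=\phi\,\widehat f$: this is precisely where the fact that $\CZ$ is generated by the common eigenfunctions (so that $\phi$ is $\CZ$-measurable) enters, and I expect it to be the one point to state carefully.
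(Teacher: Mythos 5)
Your proposal is correct and is essentially the paper's own proof: the paper likewise takes a unimodular eigenfunction $h$ of $Y_1$ for the common eigenvalue $e(t)$, sets $\eta=f\bar h$, $\varphi=fh$, $\psi=f$, uses $\widehat\varphi=h\widehat f$ and $\widetilde\psi=\widetilde f$ to absorb $e(nt)$, and then applies Lemma~\ref{chara1}. Your extra care about $\CZ$-measurability of the eigenfunction and about extending Lemma~\ref{chara1} to complex-valued functions only makes explicit points the paper leaves implicit.
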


\begin{proof}
Let $e(t)$ be a common eigenvalue of $Y_1$ and $Y_2$, $h$ be a
corresponding eigenfunction of $Y_1$ with $ |h| = 1$. Taking
$\eta(y_1,y_2) = f(y_1,y_2)\bar h (y_1)$, $\varphi(y_1,y_2) =
f(y_1,y_2)h(y_1)$, and $\psi(y_1,y_2) = f(y_1,y_2)$, we have
$$e(nt)I_n(f, f, f)=I_n(\eta,\varphi,\psi)\ .$$ Since $\widehat \varphi(y_1,y_2)=h(y_1) \widehat f(y_1,y_2)$ and
$\widetilde \psi(y_1,y_2)=\widetilde f(y_1,y_2)$, we have $$e(nt)I_n(f, \widehat f, \widetilde f)=I_n(\eta,\widehat \varphi,\widetilde
\psi).$$ By Lemma~\ref{chara1}, the averages of the difference
$I_n(\eta,\varphi,\psi)-I_n(\eta,\widehat \varphi,\widetilde\psi)$ converge to $0$. This completes the proof.
\end{proof}

\subsection{A well chosen function $\chi$}\label{prodpart2}
In the sequel, we fix $f$, $\epsilon>0$ as in Theorem~\ref{prodf}. By
continuity of the translation in $L^2(Z)$, we can choose an
open neighborhood $U$ of $0$ in $Z$, such that
$$
\text{for every }s\in U, \int |\widehat f(z+s,y_2)-\widehat f(z,y_2)|^2
d\theta(z)d\nu_2(y_2)<\epsilon/4\ ,$$ and
$$
\text{for every }t\in U, \int |\widetilde f(y_1,z+t)-\widetilde f(y_1,z)|^2
d\nu_1(y_1)\theta(z)<\epsilon/4\ .
$$ We have
\begin{equation}\label{nainu}
|I_0(f,\widehat f,\widetilde f)-I_n(f,\widehat f,\widetilde f)|<\epsilon/2 \ \
\text{for every}\ n\  \text{such that}\ n\alpha\in U\ .
\end{equation}

We choose a continuous function $\chi$ on $Z$, non-negative,
satisfying
$$
\int \chi(z)d\theta (z)=1\ ,
$$
and
$$
\chi(z)=0\ \ \text{if}\ z\notin U.
$$

By Lemma~\ref{chara2} and a density argument, we immediately get
\begin{corollary}\label{chara3}
The averages of the
difference
$$\chi(n\alpha)\big(I_n(f,f,f)-I_n(f,\widehat
f,\widetilde f)\big)$$ converge to $0$.
\end{corollary}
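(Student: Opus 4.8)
The plan is to approximate the continuous weight $\chi$ on the compact abelian group $Z$ by trigonometric polynomials and to apply Lemma~\ref{chara2} to each of the finitely many frequencies that appear. Throughout, write $D_n:=I_n(f,f,f)-I_n(f,\widehat f,\widetilde f)$.

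First I would record that the eigenfunctions of the ergodic rotation $(Z,\theta,R)$ are exactly the characters $\gamma$ of the compact abelian group $Z$, each satisfying $\gamma\circ R=\gamma(\alpha)\,\gamma$. Since $\CZ$ is a sub-$\sigma$-algebra of both $(Y_1,\CY_1)$ and $(Y_2,\CY_2)$, viewing such a $\gamma$ as a function on $Y_1$ (resp.\ on $Y_2$) exhibits it as an eigenfunction of $S_1$ (resp.\ of $S_2$) with eigenvalue $\gamma(\alpha)$; hence $\gamma(\alpha)$ is a common eigenvalue of $Y_1$ and $Y_2$, so Lemma~\ref{chara2} applies with $e(t)=\gamma(\alpha)$ and gives that the averages of $\gamma(\alpha)^{n}D_n$ converge to $0$.

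Next, fix $\delta>0$. Since $0\le f\le 1$ we have $0\le\widehat f\le 1$ and $0\le\widetilde f\le 1$, so $I_n(f,f,f)$ and $I_n(f,\widehat f,\widetilde f)$ both lie in $[0,1]$ and $|D_n|\le 1$ for every $n$. By the Stone--Weierstrass theorem the trigonometric polynomials on $Z$ (finite linear combinations of characters) are uniformly dense in $C(Z)$, so I can pick $P=\sum_{j=1}^{m}c_j\gamma_j$ with $\norm{\chi-P}_\infty<\delta$. Then
\[
\chi(n\alpha)D_n=\sum_{j=1}^{m}c_j\,\gamma_j(\alpha)^{n}D_n+(\chi-P)(n\alpha)D_n\ ,
\]
the averages of the first sum converge to $0$ by the previous paragraph (a finite linear combination of sequences each of which averages to $0$), while the second term is bounded in absolute value by $\delta$. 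Hence the averages of $\chi(n\alpha)D_n$ over any sequence of intervals whose lengths tend to infinity have limit superior at most $\delta$ in absolute value; letting $\delta\to 0$ yields the corollary.

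I do not expect a genuine obstacle here: this is the standard passage from a single frequency to a continuous weight. The only points deserving care are the identification of the characters of $Z$ with common eigenfunctions of $Y_1$ and $Y_2$, so that Lemma~\ref{chara2} really covers every frequency occurring in $P$, and the uniform bound $|D_n|\le 1$ that makes the approximation error controllable.
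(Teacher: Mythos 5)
Your argument is correct and is exactly the ``density argument'' the paper invokes without spelling out: you approximate $\chi$ uniformly by trigonometric polynomials on $Z$, note that each character of $Z$ pulled back to $Y_1$ and $Y_2$ is a common eigenfunction so that Lemma~\ref{chara2} covers each frequency $\gamma(\alpha)=e(t)$, and use the uniform bound on the differences to control the approximation error. This matches the paper's intended proof, just written out in full.
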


We have
\begin{lemma}\label{bound}
\begin{equation}\label{bound1}
\limsup_{N-M\rightarrow\infty} \left|\frac{1}{N-M}\sum_{n\in
[M,N)}\chi(n\alpha)\big(I_0(f,\widehat f,\widetilde
f)-I_n(f,f,f)\big)\right|\leq \epsilon/2\ .
\end{equation}
\end{lemma}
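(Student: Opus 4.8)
The plan is to decompose the quantity inside the limsup into two pieces using Corollary~\ref{chara3}. Write
\[
I_0(f,\widehat f,\widetilde f)-I_n(f,f,f)=\bigl(I_0(f,\widehat f,\widetilde f)-I_n(f,\widehat f,\widetilde f)\bigr)+\bigl(I_n(f,\widehat f,\widetilde f)-I_n(f,f,f)\bigr).
\]
When we multiply by $\chi(n\alpha)$ and average, the second term contributes nothing in the limit: indeed, by Corollary~\ref{chara3} the averages of $\chi(n\alpha)\bigl(I_n(f,f,f)-I_n(f,\widehat f,\widetilde f)\bigr)$ converge to $0$. So it remains to bound
\[
\limsup_{N-M\to\infty}\left|\frac{1}{N-M}\sum_{n\in[M,N)}\chi(n\alpha)\bigl(I_0(f,\widehat f,\widetilde f)-I_n(f,\widehat f,\widetilde f)\bigr)\right|\leq\epsilon/2.
\]

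For this, the key observation is that $\chi$ is supported on $U$, so only those $n$ with $n\alpha\in U$ contribute to the sum, and for exactly those $n$ we have the pointwise estimate \eqref{nainu}, namely $|I_0(f,\widehat f,\widetilde f)-I_n(f,\widehat f,\widetilde f)|<\epsilon/2$. Therefore each summand is bounded in absolute value by $\chi(n\alpha)\cdot\epsilon/2$ (using also $\chi\geq 0$), and hence
\[
\left|\frac{1}{N-M}\sum_{n\in[M,N)}\chi(n\alpha)\bigl(I_0(f,\widehat f,\widetilde f)-I_n(f,\widehat f,\widetilde f)\bigr)\right|\leq\frac{\epsilon}{2}\cdot\frac{1}{N-M}\sum_{n\in[M,N)}\chi(n\alpha).
\]
Since $(Z,\theta,R)$ is an ergodic rotation and $\chi$ is continuous, by the uniform version of the ergodic theorem for uniquely ergodic systems the averages $\frac{1}{N-M}\sum_{n\in[M,N)}\chi(n\alpha)$ converge to $\int\chi\,d\theta=1$ as $N-M\to\infty$. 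Taking the limsup gives the bound $\epsilon/2$, as desired.

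The main obstacle, such as it is, is being careful about the order of the two reductions: one must first invoke Corollary~\ref{chara3} to replace $I_n(f,f,f)$ by $I_n(f,\widehat f,\widetilde f)$ in the limit (which is legitimate since we are taking a limsup of an average and the discarded term genuinely averages to $0$), and only then use the pointwise estimate \eqref{nainu}, which is available precisely on the support of $\chi$. A minor point to record is that \eqref{nainu} together with $\int\chi\,d\theta=1$ is what forces the constant to be exactly $\epsilon/2$ rather than something larger; and one should note that the justification of \eqref{nainu} itself rests on the choice of the neighborhood $U$ made just before it, so no extra work is needed here. Assembling these pieces via the triangle inequality for the limsup yields \eqref{bound1}.
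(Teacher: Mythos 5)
Your proof is correct and follows essentially the same route as the paper: reduce via Corollary~\ref{chara3} to the averages of $\chi(n\alpha)\bigl(I_0(f,\widehat f,\widetilde f)-I_n(f,\widehat f,\widetilde f)\bigr)$, bound each summand by $\chi(n\alpha)\cdot\epsilon/2$ using \eqref{nainu} on the support of $\chi$, and conclude from the equidistribution of $\{n\alpha\}$ (the paper cites ergodicity of the rotation, you cite unique ergodicity — the same fact). No gaps.
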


\begin{proof}
By Lemma~\ref{chara3}, it suffices to prove
\begin{equation}\label{bound2}
\limsup_{N-M\rightarrow\infty} \left|\frac{1}{N-M}\sum_{n\in
[M-N)}\chi(n\alpha)\big(I_0(f,\widehat f,\widetilde f)-I_n(f,\widehat f,\widetilde
f)\big)\right|\leq \epsilon/2\ .
\end{equation}
If $n\alpha\notin U$, then $\chi(n\alpha)\big(I_0(f,\widehat f,\widetilde
f)-I_n(f,\widehat f,\widetilde f)\big)=0$. If $n\alpha\in U$, then
by~\eqref{nainu},$$\chi(n\alpha)|I_0(f,\widehat f,\widetilde f)-I_n(f,\widehat
f,\widetilde f)|<\chi(n\alpha)\cdot \epsilon/2\ .$$ By ergodicity of
$(Z,\theta,R)$, the sequence $\{n\alpha\}$ is uniformly distributed
in $Z$. Therefore the average of $\chi(n\alpha)$ converges
to $1$, and this gives~\eqref{bound2}.
\end{proof}

\subsection{End of the proof}
Applying Lemma~\ref{ineq} with $k=2$, $\CX_1=\CZ\times\CY_2$ and
$\CX_2=\CY_1\times \CZ$, we immediately get the following estimate of
$I_0(f, \widehat f, \widetilde f)$:
\begin{corollary}\label{prodineq}
$$I_0(f, \widehat f, \widetilde f)\geq (\int f\, d\mu)^3\ .$$
\end{corollary}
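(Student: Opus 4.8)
The plan is to recognize that, once the definitions are unwound, $I_0(f,\widehat f,\widetilde f)$ is exactly the left-hand side of inequality~\eqref{eq:ineq} for a suitable choice of two sub-$\sigma$-algebras, so that Corollary~\ref{prodineq} drops out of Lemma~\ref{ineq} with no further work. Concretely, I would first note that since $T_1^0=T_2^0=\id$ we have
$$I_0(f,\widehat f,\widetilde f)=\int f\cdot\widehat f\cdot\widetilde f\,d\mu=\int f\cdot\E(f\mid\CZ\times\CY_2)\cdot\E(f\mid\CY_1\times\CZ)\,d\mu,$$
using $\widehat f=\E(f\mid\CZ\times\CY_2)$ and $\widetilde f=\E(f\mid\CY_1\times\CZ)$. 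Here $\CZ$ is being viewed as a sub-$\sigma$-algebra of both $\CY_1$ and $\CY_2$, so $\CZ\times\CY_2$ and $\CY_1\times\CZ$ are genuine sub-$\sigma$-algebras of $\CX$, and $f$ is bounded and non-negative (indeed $0\le f\le1$).

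I would then invoke Lemma~\ref{ineq} with $k=2$, $\CX_1=\CZ\times\CY_2$ and $\CX_2=\CY_1\times\CZ$, which gives
$$\int f\cdot\E(f\mid\CX_1)\cdot\E(f\mid\CX_2)\,d\mu\ \ge\ \Big(\int f\,d\mu\Big)^{3},$$
i.e. exactly the asserted bound. There is essentially no obstacle: the only point requiring a word of care is that $\widehat f$ and $\widetilde f$ are honest conditional expectations onto honest sub-$\sigma$-algebras of the product space, which is true by construction, so Lemma~\ref{ineq} applies verbatim. (The exponent $3=k+1$ is precisely what the two-term product $\widehat f\cdot\widetilde f$ produces, paralleling the role of the two conditional expectations in the product-system analysis.)
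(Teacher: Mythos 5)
Your proposal is correct and coincides with the paper's own argument: the paper likewise observes that $I_0(f,\widehat f,\widetilde f)=\int f\cdot\E(f\mid\CZ\times\CY_2)\cdot\E(f\mid\CY_1\times\CZ)\,d\mu$ and applies Lemma~\ref{ineq} with $k=2$, $\CX_1=\CZ\times\CY_2$, $\CX_2=\CY_1\times\CZ$. No gaps.
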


We can now resume the proof of Theorem~\ref{prodf}:
\begin{proof}[Proof of Theorem~\ref{prodf}]
By Corollary~\ref{prodineq}, it suffices to prove that the set $E\colon=\{n\colon I_n(f,f,f)>
I_0(f,\widehat f,\widetilde f)-\epsilon\}$ is syndetic. 

Suppose by contradiction that $E$ is
not syndetic, then there exists a sequence of intervals $[M_i,N_i)$
whose lengths tend to infinity such that $I_n(f)\leq
I_0(f,\widehat f,\widetilde f)-\epsilon$ for
every $i$ and for every $n\in [M_i,N_i)$. Taking averages of $\chi(n\alpha)\big(I_0(f,\widehat f,\widetilde
f)-I_n(f)\big)$ over these intervals, we have a contradiction with
Lemma~\ref{bound}.
\end{proof}

\section{Preliminaries: Magic systems}\label{sec:pre}
In this Section, $(X,\CX,\mu,T_1,T_2)$ is a system. We review some
definitions and properties of~\cite{H} needed in sequel, and
establish more properties that do not appear there.

The results of this section hold for any number of commuting
transformations, but we state and prove them only for two
transformations.

\subsection{The box measure and the box seminorm}\label{secseminorm}
Let $X^*=X^4$. The points of $X^*$ are written as
$x^*=(x_{00},x_{01},x_{10},x_{11})$.

\subsubsection{}
Let $\mu_1=\mu\times_{\CI (T_1)}\mu$ be the relatively independent
square of $\mu$ over $\CI(T_1)$. This means that $\mu_1$ is the
measure on $X^2$ characterized by:

For $f_0,f_1\in L^{\infty}(\mu)$, we have
$$\int f_0\otimes f_1\, d\mu_1=\int \E(f_0|\CI(T_1))\cdot\E(f_1|\CI(T_1))\, d\mu\ .$$
Then $\mu_1$ is invariant under the transformations $T_1\times T_1$, $T_1\times \id$ and $T_2\times T_2$. We define the measure $\mu^*$ on $X^*=X^4$ by
$$\mu^*=\mu_1\times_{\CI(T_2\times T_2)}\mu_1\ .$$
When needed, we write $\mu^*_{T_1,T_2}$ instead of $\mu^*$. Then $\mu^*$ is invariant under the transformations $T_1^*$ and $T_2^*$ given by $$T_1^*=T_1\times \id\times T_1\times \id\ ,$$
$$T_2^*=T_2\times T_2\times \id\times \id\ .$$

The projection $\pi_{00}\colon x\mapsto x_{00}$ is a factor map from $(X^*,\mu^*,T_1^*,T_2^*)$ to $(X,\mu,T_1,T_2)$.

We remark that by construction,
\begin{equation}\label{muchangetrans}
 \mu^*_{T_1,T_2}=\mu^*_{T_1^{-1},T_2}=\mu^*_{T_1,T_2^{-1}}\ .
\end{equation}

\subsubsection{}
We recall the definition of the seminorm introduced in~\cite{H}:

For every $f\in L^{\infty}(\mu)$ on $X$,
\begin{equation}
\label{eq:defseminorm} \nnorm f_{\mu,T_1,T_2}=\Big(\int
f(x_{00})f(x_{01})f(x_{10})f(x_{11})\, d\mu^*(x^*)\Big)^{1/4}\ .
\end{equation}
We generally omit the subscript $\mu$ when there is no ambiguity. By~\eqref{muchangetrans}, we have $$\nnorm f_{T_1,T_2}=\nnorm f_{T_1^{-1},T_2}\ ,$$ and by Proposition 2 of~\cite{H}, we have $$\nnorm f_{T_1, T_2}=\nnorm f_{T_2,T_1}\ .$$

Here are some properties that we need:
\begin{lemma}\label{upbound}
Let $f_0, f_1, f_2 \in L^{\infty}(\mu)$ with $|f_i|\leq 1$ $i=0,1,2$,  and for every $n$, we write
$$I_n(f_0,f_1,f_2):=\int f_0\cdot T_1^n f_1\cdot T_2^n f_2\,  d\mu\ .$$ Then for every $t\in\mathbb T$,
\begin{equation}\label{equpbound1}
\limsup_{N-M\rightarrow\infty}\left|\frac{1}{N-M}\sum_{n\in [M,N)}
e(nt)I_n(f_0,f_1,f_2)\right|\leq \nnorm{f_0}_{T_1,T_2}\ .
\end{equation}
\end{lemma}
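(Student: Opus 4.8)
The plan is to bound the left side of \eqref{equpbound1} by $\nnorm{f_0}_{T_1,T_2}$ using a single application of the van der Corput Lemma together with the mean ergodic theorem, after a change of variables that turns $f_0$ and $f_1$ into the ``moving'' functions while $f_2$ becomes inert. Replacing $x$ by $T_2^{-n}x$ in the integral defining $I_n$ gives $I_n(f_0,f_1,f_2)=\int f_2\cdot T_2^{-n}f_0\cdot(T_1T_2^{-1})^n f_1\,d\mu$. Setting $v_n:=e(nt)\,T_2^{-n}f_0\cdot(T_1T_2^{-1})^n f_1\in L^\infty(\mu)$, we have $\frac1{N-M}\sum_{n\in[M,N)}e(nt)I_n=\int f_2\cdot\bigl(\frac1{N-M}\sum_{n\in[M,N)}v_n\bigr)\,d\mu$, so by Cauchy--Schwarz and $\|f_2\|_{L^2(\mu)}\le1$ it suffices to prove $\limsup_{N-M\to\infty}\bigl\|\frac1{N-M}\sum_{n\in[M,N)}v_n\bigr\|_{L^2(\mu)}\le\nnorm{f_0}_{T_1,T_2}$.

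Applying the van der Corput Lemma (in the form valid for averages over intervals of increasing length) to $(v_n)$ in $L^2(\mu)$ bounds the square of this quantity by $\limsup_{H\to\infty}\frac1H\sum_{h=0}^{H-1}\limsup_{N-M\to\infty}\bigl|\frac1{N-M}\sum_{n\in[M,N)}\langle v_{n+h},v_n\rangle\bigr|$. A direct computation -- the phase collapses to $e(ht)$, and after applying $T_2^n$ inside the integral the commuting powers of $T_1,T_2$ combine -- gives $\langle v_{n+h},v_n\rangle=e(ht)\int P_h\cdot T_1^n Q_h\,d\mu$, where $P_h:=f_0\cdot T_2^{-h}f_0$ and $Q_h:=f_1\cdot(T_1T_2^{-1})^h f_1$. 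By the mean ergodic theorem for $T_1$, the inner $\limsup$ equals $\bigl|\int\E(P_h\mid\CI(T_1))\,\E(Q_h\mid\CI(T_1))\,d\mu\bigr|=\bigl|\int P_h\otimes Q_h\,d\mu_1\bigr|=:|b_h|$, using the defining property of $\mu_1=\mu\times_{\CI(T_1)}\mu$.

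The crux is then to show $\limsup_{H\to\infty}\frac1H\sum_{h=0}^{H-1}|b_h|\le\nnorm{f_0}_{T_1,T_2}^2$. Since $|Q_h|\le1$, one has $|\E(Q_h\mid\CI(T_1))|\le1$, so $|b_h|\le\bigl\|\E(P_h\mid\CI(T_1))\bigr\|_{L^2(\mu)}$ and hence $|b_h|^2\le\int P_h\otimes P_h\,d\mu_1$; thus $f_1$ has been discarded and only $f_0$ remains. Now $P_h\otimes P_h=(f_0\otimes f_0)\cdot(T_2\times T_2)^{-h}(f_0\otimes f_0)$ on $X^2$, and $\mu_1$ is $(T_2\times T_2)$-invariant, so $\int P_h\otimes P_h\,d\mu_1=\bigl\langle (T_2\times T_2)^{-h}(f_0\otimes f_0),\,f_0\otimes f_0\bigr\rangle_{L^2(\mu_1)}$, a positive-definite sequence in $h$. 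By the mean ergodic theorem for $T_2\times T_2$ on $(X^2,\mu_1)$, its averages converge to $\bigl\|\E_{\mu_1}(f_0\otimes f_0\mid\CI(T_2\times T_2))\bigr\|_{L^2(\mu_1)}^2$, which by the construction $\mu^*=\mu_1\times_{\CI(T_2\times T_2)}\mu_1$ is exactly $\nnorm{f_0}_{T_1,T_2}^4$. One last Cauchy--Schwarz in the $h$-average, $\frac1H\sum_h|b_h|\le\bigl(\frac1H\sum_h|b_h|^2\bigr)^{1/2}$, then gives $\limsup_H\frac1H\sum_h|b_h|\le\nnorm{f_0}_{T_1,T_2}^2$, and combining the three steps yields \eqref{equpbound1}.

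I expect the main obstacle to be the bookkeeping in the van der Corput step: one must track the commuting powers of $T_1$ and $T_2$ so that, after the change of variables and the interior shift by $T_2^n$, the $f_1$-factor lands entirely under the $T_1^n$-average (where the mean ergodic theorem for $T_1$ applies) while the $f_0$-factor survives as $f_0\cdot T_2^{-h}f_0$. One must then recognize that, after the Cauchy--Schwarz discarding $Q_h$, the sequence that appears is precisely the one encoded in $\mu^*$ -- which is why the order of the construction (first the relatively independent square over $\CI(T_1)$, then over $\CI(T_2\times T_2)$) is the relevant one here.
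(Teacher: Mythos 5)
Your proposal is correct: the change of variables by $T_2^{-n}$, the van der Corput lemma for the sequence $v_n$, the mean ergodic theorem for $T_1$ (yielding $\int P_h\otimes Q_h\,d\mu_1$), the Cauchy--Schwarz step discarding $Q_h$, and the final ergodic average under $T_2\times T_2$ on $(X^2,\mu_1)$ identifying the limit with $\nnorm{f_0}_{T_1,T_2}^4$ all check out, including the harmless constant phase $e(ht)$ and the uniformity over intervals $[M,N)$. The paper itself gives no argument here --- it cites Proposition 1 of~\cite{H} for $t=0$ and notes the twisted case follows by the same proof --- and what you have written out is essentially that standard proof, so your route coincides with the one the paper invokes.
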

For $t=0$, this is Proposition 1 of~\cite{H}, and the general case follows by the same proof.

Let $f_0,f_1,f_2$ and $I_n(f_0,f_1,f_2)$ be as in Lemma~\ref{upbound}. For every $n$, we have
$$I_n(f_0,f_1,f_2)=\int T_2^{-n}f_0\cdot (T_1T_2^{-1})^nf_1\cdot f_2\, d\mu\ .$$
Applying Lemma~\ref{upbound} to the pair of transformations $(T_2,
T_1T_2^{-1})$, we have
\begin{equation}\label{equpbound2}
\limsup_{N-M\rightarrow\infty}\left|\frac{1}{N-M}\sum_{n\in [M,N)}
e(nt)I_n(f_0,f_1,f_2)\right|\leq \nnorm{f_1}_{T_2,T_1T_2^{-1}}\ .
\end{equation}
and by the same argument,
\begin{equation}\label{equpbound3}
\limsup_{N-M\rightarrow\infty}\left|\frac{1}{N-M}\sum_{n\in [M,N)}
e(nt)I_n(f_0,f_1,f_2)\right|\leq \nnorm{f_2}_{T_1,T_1T_2^{-1}}\ .
\end{equation}

\begin{lemma}[\cite{H}, Lemma 1]\label{lem:seminorm}
For every $f\in L^{\infty}(\mu)$,
\begin{multline*}
\nnorm f_{T_1,T_2}=\\
\Big(\lim_{N_2\to+\infty} \frac{1}{N_2}\sum_{n_2=0}^{N_2-1} \bigl(
\lim_{N_1\to+\infty}\frac{1}{N_1}\sum_{n_1=0}^{N_1-1} \int f\cdot  T_1^{n_1} f \cdot T_2^{n_2}f\cdot
T_1^{n_1}T_2^{n_2}f\, d\mu\bigr)\Big)^{1/4}\ .
\end{multline*}
\end{lemma}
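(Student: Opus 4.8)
The plan is to follow the proof of Lemma 1 in~\cite{H}: unwind the definitions of $\mu_1$ and $\mu^*$ and apply the mean ergodic theorem twice, first for $T_1$ on $L^2(\mu)$ and then for $T_2\times T_2$ on $L^2(\mu_1)$. Throughout we fix $f\in L^\infty(\mu)$ (real-valued, by our conventions).

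First I would fix $n_2\geq 0$ and set $g_{n_2}=f\cdot T_2^{n_2}f\in L^\infty(\mu)$. Since $T_1$ and $T_2$ commute, $f\cdot T_1^{n_1}f\cdot T_2^{n_2}f\cdot T_1^{n_1}T_2^{n_2}f=g_{n_2}\cdot T_1^{n_1}g_{n_2}$ for every $n_1$, so the inner average in the statement equals $\lim_{N_1\to\infty}\frac1{N_1}\sum_{n_1=0}^{N_1-1}\int g_{n_2}\cdot T_1^{n_1}g_{n_2}\,d\mu$. By the von Neumann mean ergodic theorem, $\frac1{N_1}\sum_{n_1=0}^{N_1-1}T_1^{n_1}g_{n_2}\to\E(g_{n_2}\mid\CI(T_1))$ in $L^2(\mu)$, and since $g_{n_2}$ is bounded the Cauchy--Schwarz inequality lets me pass to the limit under the integral; hence the inner average equals $\int\E(g_{n_2}\mid\CI(T_1))^2\,d\mu$, which by the defining property of $\mu_1=\mu\times_{\CI(T_1)}\mu$ equals $\int g_{n_2}\otimes g_{n_2}\,d\mu_1$. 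A direct computation gives $g_{n_2}\otimes g_{n_2}=(f\otimes f)\cdot(T_2\times T_2)^{n_2}(f\otimes f)$, so the inner average equals $\int(f\otimes f)\cdot(T_2\times T_2)^{n_2}(f\otimes f)\,d\mu_1$.

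It then remains to average this over $n_2\in[0,N_2)$ and let $N_2\to\infty$. Since $\mu_1$ is invariant under $T_2\times T_2$, the mean ergodic theorem on $L^2(\mu_1)$ gives $\frac1{N_2}\sum_{n_2=0}^{N_2-1}(T_2\times T_2)^{n_2}(f\otimes f)\to\E(f\otimes f\mid\CI(T_2\times T_2))$ in $L^2(\mu_1)$; pairing against $f\otimes f$ and using Cauchy--Schwarz once more, the average over $n_2$ of the inner averages converges to $\int\E(f\otimes f\mid\CI(T_2\times T_2))^2\,d\mu_1$. Finally, from the formulas $T_1^*=T_1\times\id\times T_1\times\id$ and $T_2^*=T_2\times T_2\times\id\times\id$ one sees that in $\mu^*=\mu_1\times_{\CI(T_2\times T_2)}\mu_1$ the two copies of $\mu_1$ are carried by the coordinate pairs $(x_{00},x_{01})$ and $(x_{10},x_{11})$, so $\int f(x_{00})f(x_{01})f(x_{10})f(x_{11})\,d\mu^*=\int\E(f\otimes f\mid\CI(T_2\times T_2))^2\,d\mu_1$; together with the definition~\eqref{eq:defseminorm} of $\nnorm f_{T_1,T_2}$ this shows that the double limit in the statement equals $\nnorm f_{T_1,T_2}^4$, completing the proof.

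The argument has no serious obstacle. The only points requiring care are the two interchanges of limit and integral (both supplied by the $L^2$ mean ergodic theorem together with Cauchy--Schwarz and the boundedness of $f$) and the bookkeeping in the last step of which pair of coordinates of $X^4$ is governed by each factor in the relatively independent joining defining $\mu^*$.
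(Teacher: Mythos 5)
Your proof is correct. The paper does not prove this lemma itself but imports it from Host's paper (Lemma 1 of \cite{H}), and your argument is precisely the standard one used there: rewrite the integrand as $g_{n_2}\cdot T_1^{n_1}g_{n_2}$ with $g_{n_2}=f\cdot T_2^{n_2}f$, apply the mean ergodic theorem for $T_1$ to identify the inner limit with $\int (f\otimes f)\cdot (T_2\times T_2)^{n_2}(f\otimes f)\,d\mu_1$, then apply the mean ergodic theorem for $T_2\times T_2$ on $L^2(\mu_1)$ and match the result with the defining property of $\mu^*=\mu_1\times_{\CI(T_2\times T_2)}\mu_1$; your coordinate bookkeeping ($(x_{00},x_{01})$ and $(x_{10},x_{11})$ carrying the two copies of $\mu_1$) is consistent with the formulas for $T_1^*$ and $T_2^*$ given in the paper.
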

In fact, we can take the simultaneous instead of iterated limit by Theorem 1.1 of~\cite{C}.

By the H\"older Inequality, Lemma~\ref{lem:seminorm} implies that, for every $f\in
L^{\infty}(\mu)$,
\begin{equation}
\label{eq:semiL4} \nnorm f_{T_1,T_2}\leq\norm f_{L^4(\mu)}\ .
\end{equation}
Let $\pi\colon (X,\mu,T_1,T_2)\to (Y,\nu,S_1,S_2)$ be a factor map,
and $f\in L^{\infty}(\nu)$, by Lemma~\ref{lem:seminorm}, we have
\begin{equation}\label{normfactor}
 \nnorm {f\circ\pi}_{\mu,T_1,T_2} =\nnorm f_{\nu,S_1,S_2}\ .
\end{equation}
Moreover, by applying Lemma~\ref{lem:seminorm} to the measure $\mu$
and to each element of its ergodic decomposition, we get:

\begin{lemma}
\label{lem:semidecomp} If $\mu=\int \mu_x\,d\mu(x)$ is the ergodic
decomposition of $\mu$ under $T_1$ and $T_2$ then, for every $f\in
L^{\infty}(\mu)$,
$$
 \nnorm f_{\mu,T_1,T_2}^4=\int\nnorm f_{\mu_x,T_1,T_2}^4\,d\mu(x)\ .
$$
\end{lemma}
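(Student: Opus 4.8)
\textbf{Proof proposal for Lemma~\ref{lem:semidecomp}.}

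The plan is to combine the iterated-limit formula of Lemma~\ref{lem:seminorm} with the ergodic decomposition of $\mu$ under the $\ZZ^2$-action generated by $T_1$ and $T_2$. Write $\mu=\int\mu_x\,d\mu(x)$ for this decomposition; each $\mu_x$ is $T_1$- and $T_2$-invariant, and for $\mu$-almost every $x$ the system $(X,\mu_x,T_1,T_2)$ is ergodic. For a fixed bounded $f$, Lemma~\ref{lem:seminorm} applied to the measure $\mu$ gives
$$
\nnorm f_{\mu,T_1,T_2}^4=\lim_{N_2\to\infty}\frac{1}{N_2}\sum_{n_2=0}^{N_2-1}\Big(\lim_{N_1\to\infty}\frac{1}{N_1}\sum_{n_1=0}^{N_1-1}\int f\cdot T_1^{n_1}f\cdot T_2^{n_2}f\cdot T_1^{n_1}T_2^{n_2}f\,d\mu\Big).
$$
First I would fix $n_2$ and study the inner average. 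The function $f\cdot T_2^{n_2}f$ is a fixed bounded function, call it $g_{n_2}$; by the pointwise (or mean) ergodic theorem for $T_1$ applied on $(X,\mu)$ together with disintegration, the inner $N_1$-average of $\int f\cdot T_1^{n_1}f\cdot T_2^{n_2}(\,f\cdot T_1^{n_1}f)\,d\mu$ — rewritten appropriately — converges, and one checks that integrating against $\mu$ commutes with the ergodic average; more directly, for each fixed $x$ the inner limit over $(X,\mu_x)$ is $\int f\cdot T_2^{n_2}f\cdot\E(f\cdot T_2^{n_2}f\mid\CI(T_1))\,d\mu_x$ (using that $\mu_x$ need not be $T_1$-ergodic, only $T_1$-invariant), and by the dominated convergence theorem the inner $N_1$-average against $\mu$ equals $\int\big(\text{inner limit for }\mu_x\big)\,d\mu(x)$.

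Next I would iterate: apply the same reasoning to the outer $N_2$-average. Having fixed the inner structure, the outer average is a Cesàro average of a bounded sequence indexed by $n_2$, and for each fixed $x$ the iterated double limit equals $\nnorm f_{\mu_x,T_1,T_2}^4$ by Lemma~\ref{lem:seminorm} applied on $(X,\mu_x,T_1,T_2)$. Passing the outer limit inside $\int\cdot\,d\mu(x)$ once more by dominated convergence (the integrands are bounded by $\norm f_\infty^4$) yields
$$
\nnorm f_{\mu,T_1,T_2}^4=\int\nnorm f_{\mu_x,T_1,T_2}^4\,d\mu(x),
$$
which is the claim. Alternatively, and perhaps more cleanly, one can invoke the simultaneous-limit version mentioned after Lemma~\ref{lem:seminorm} (Theorem~1.1 of~\cite{C}): the seminorm is the fourth root of a single $\ZZ^2$-Cesàro limit of $\int f\cdot T_1^{n_1}f\cdot T_2^{n_2}f\cdot T_1^{n_1}T_2^{n_2}f\,d\mu$, and then one only needs the $\ZZ^2$ mean ergodic theorem together with a single application of dominated convergence to interchange limit and ergodic-decomposition integral.

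The main obstacle is the interchange of the (iterated) Cesàro limits with the integral $\int\cdot\,d\mu(x)$ over the ergodic decomposition, and the measurability of $x\mapsto\nnorm f_{\mu_x,T_1,T_2}$. Both are handled by standard arguments — the integrands along the averaging are uniformly bounded by $\norm f_\infty^4$, so dominated convergence applies provided the relevant pointwise-in-$x$ limits exist, and existence of those limits is exactly Lemma~\ref{lem:seminorm} (or Theorem~1.1 of~\cite{C}) applied to the ergodic components $\mu_x$; measurability follows since each finite average $x\mapsto\int(\cdots)\,d\mu_x(x)$ is measurable and the limit of measurable functions is measurable. Using the simultaneous-limit formulation reduces this to a single such interchange, which is why I would phrase the proof that way.
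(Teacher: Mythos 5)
Your proposal is correct and is essentially the argument the paper intends: the paper derives Lemma~\ref{lem:semidecomp} by applying Lemma~\ref{lem:seminorm} to $\mu$ and to each ergodic component $\mu_x$, exactly as you do, with the interchange of the Ces\`aro limits and the integral over the decomposition justified by uniform boundedness (dominated convergence) and measurability of the pointwise limits. You simply spell out the routine details that the paper leaves implicit, so there is nothing to add.
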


\subsubsection{}
We recall some properties about some $\sigma$-algebra on $X$
introduced in~\cite{H}:
\begin{lemma}[\cite{H}, Lemma 4]\label{lemmah4}
Let $\mathcal G$ be the
$\sigma$-algebra on $X$ consisting of sets $B$ such that there
exists a subset $A$ of $X^3$ satisfying the relation
\begin{equation}
 \mathbf{1}_B(x_{00})=\mathbf{1}_A(x_{01},x_{10},x_{11})
\end{equation}
for $\mu^*$-almost every $x=(x_{00},x_{01},x_{10},x_{11})\in X^4$.
Then for every $f\in L^{\infty}(\mu)$, we have
\begin{equation}
 \nnorm{f}_{T_1,T_2}=0\text{ if and only if } \E_{\mu}(f\mid\mathcal G)=0\ .
\end{equation}
\end{lemma}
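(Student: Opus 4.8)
The plan is to prove the two implications separately, the common tool being the Gowers--Cauchy--Schwarz inequality for $\mu^*$: for bounded functions $f_{00},f_{01},f_{10},f_{11}$ on $X$,
$$\Bigl|\int f_{00}(x_{00})\,f_{01}(x_{01})\,f_{10}(x_{10})\,f_{11}(x_{11})\,d\mu^*(x^*)\Bigr|\ \le\ \prod_{i,j\in\{0,1\}}\nnorm{f_{ij}}_{T_1,T_2}\ .$$
This I would obtain from the construction of $\mu^*$ by applying the Cauchy--Schwarz inequality once for each of the two relatively independent joinings $\mu^*=\mu_1\times_{\CI(T_2\times T_2)}\mu_1$ and $\mu_1=\mu\times_{\CI(T_1)}\mu$; in particular $\nnorm{\cdot}_{T_1,T_2}$ is a seminorm (all of this is already in~\cite{H}). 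I would also record an elementary reformulation of $\mathcal G$: since $\pi_{00}$ sends $\mu^*$ to $\mu$, a set $B\subset X$ lies in $\mathcal G$ exactly when $\pi_{00}^{-1}(B)$ lies, modulo $\mu^*$-null sets, in $\sigma(x_{01},x_{10},x_{11})$, so that $\pi_{00}^{-1}\mathcal G=\sigma(x_{00})\cap\sigma(x_{01},x_{10},x_{11})$ modulo $\mu^*$-null sets.

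For the implication $\nnorm f_{T_1,T_2}=0\Rightarrow\E_\mu(f\mid\mathcal G)=0$: putting $f_{00}=f$ in the inequality above with the other three functions arbitrary gives $\int f(x_{00})f_{01}(x_{01})f_{10}(x_{10})f_{11}(x_{11})\,d\mu^*=0$, and then, by multilinearity and a density argument, $\int f(x_{00})\,G(x_{01},x_{10},x_{11})\,d\mu^*=0$ for every bounded $G$ on $X^3$. Given $B\in\mathcal G$, I would pick $A\subset X^3$ with $\mathbf{1}_B(x_{00})=\mathbf{1}_A(x_{01},x_{10},x_{11})$ $\mu^*$-a.e.\ and take $G=\mathbf{1}_A$, so that $\int_B f\,d\mu=\int\mathbf{1}_B(x_{00})f(x_{00})\,d\mu^*=\int\mathbf{1}_A(x_{01},x_{10},x_{11})f(x_{00})\,d\mu^*=0$; since $B\in\mathcal G$ is arbitrary, $\E_\mu(f\mid\mathcal G)=0$.

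For the implication $\E_\mu(f\mid\mathcal G)=0\Rightarrow\nnorm f_{T_1,T_2}=0$: disintegrating $\mu^*$ over the coordinate $x_{00}$ and setting $\mathcal{D}f(x_{00}):=\E_{\mu^*}\bigl(f(x_{01})f(x_{10})f(x_{11})\mid x_{00}\bigr)$, one has $\nnorm f_{T_1,T_2}^4=\int f\cdot\mathcal{D}f\,d\mu$. It is therefore enough to show that $\mathcal{D}f$ is $\mathcal G$-measurable, for then $\int f\cdot\mathcal{D}f\,d\mu=\int\E_\mu(f\mid\mathcal G)\cdot\mathcal{D}f\,d\mu=0$.

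The step I expect to be the main obstacle is precisely this $\mathcal G$-measurability of $\mathcal{D}f$, which I would deduce from the following conditional independence property of $\mu^*$: the $\sigma$-algebras $\sigma(x_{00})$ and $\sigma(x_{01},x_{10},x_{11})$ are relatively independent over their intersection $\sigma(x_{00})\cap\sigma(x_{01},x_{10},x_{11})=\pi_{00}^{-1}\mathcal G$. Granting this, for every bounded $\sigma(x_{01},x_{10},x_{11})$-measurable $F$ one has $\E_{\mu^*}(F\mid\sigma(x_{00}))=\E_{\mu^*}(F\mid\pi_{00}^{-1}\mathcal G)$; taking $F=f(x_{01})f(x_{10})f(x_{11})$ then shows $\mathcal{D}f$ is $\mathcal G$-measurable (equivalently, $\E_{\mu^*}(\psi(x_{00})\mid\sigma(x_{01},x_{10},x_{11}))=\E_\mu(\psi\mid\mathcal G)(x_{00})$ for every bounded $\psi$ on $X$). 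To prove the conditional independence I would climb the tower defining $\mu^*$: the defining property of a relatively independent joining gives, under $\mu^*$, that $\sigma(x_{00},x_{01})$ and $\sigma(x_{10},x_{11})$ are relatively independent over $\CI(T_2\times T_2)$, and inside each of the two copies that $\sigma(x_{00})$ and $\sigma(x_{01})$ (resp.\ $\sigma(x_{10})$ and $\sigma(x_{11})$) are relatively independent over $\CI(T_1)$, the two occurrences of $\CI(T_1)$, and the two occurrences of $\CI(T_2\times T_2)$, being identified through the joinings. Chaining these conditional independences --- first reducing to the ergodic case via Lemma~\ref{lem:semidecomp} if convenient, in which case $\CI(T_1)\cap\CI(T_2)$ is trivial --- should yield the claim; the delicate point is the bookkeeping of how these invariant $\sigma$-algebras are nested inside one another, and this is the heart of the argument.
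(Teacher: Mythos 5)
The paper you are comparing against does not actually prove this lemma: it is quoted from Host \cite{H} (Lemma 4 there), so you are in effect reproving an external result. Your first implication is essentially the standard argument and is fine: the Gowers--Cauchy--Schwarz bound for $\mu^*$, multilinearity and density of sums of product functions give $\int f(x_{00})G(x_{01},x_{10},x_{11})\,d\mu^*=0$ for every bounded $G$, hence $\E_\mu(f\mid\mathcal G)=0$ (a small caveat: to get all four factors equal to $\nnorm{\cdot}_{T_1,T_2}$ you need, besides the two relatively independent joinings in the tower, the symmetry $\mu^*_{T_1,T_2}=\mu^*_{T_2,T_1}$, i.e.\ Proposition 2 of \cite{H}, to run the second Cauchy--Schwarz in the transverse pairing).

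The gap is in the converse, exactly at the step you yourself flag as the heart of the matter, and it is not bookkeeping. Your reduction is correct: it suffices that $\E_{\mu^*}(G(x_{01},x_{10},x_{11})\mid x_{00})$ be $\mathcal G$-measurable, equivalently that $\sigma(x_{00})$ and $\sigma(x_{01},x_{10},x_{11})$ be relatively independent over $\pi_{00}^{-1}\mathcal G$. But note that, given the easy direction, this relative independence is \emph{equivalent} to the lemma itself, and it fails for a general coupling (two sub-$\sigma$-algebras need not be relatively independent over their intersection), so any proof must exploit the structure of $\mu^*$ in an essential way. The proposed ``chaining'' does not close. The first link does work: since the two sides $(x_{00},x_{01})$ and $(x_{10},x_{11})$ are relatively independent over $\CI(T_2\times T_2)$, one gets, writing $\pi_0,\pi_1$ for the coordinates of $X^2$, that $\E_{\mu^*}(f(x_{00})\mid x_{01},x_{10},x_{11})$ equals $\E_{\mu_1}\bigl(f\circ\pi_0\mid \sigma(\pi_1)\vee\CI(T_2\times T_2)\bigr)$ evaluated on the first side. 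But the remaining step --- that this function is $\mu_1$-a.e.\ equal to a function of $\pi_0$ alone --- is not a formal consequence of ``$\sigma(\pi_0)$ and $\sigma(\pi_1)$ are relatively independent over $\CI(T_1)$'': the conditioning $\sigma$-algebra $\sigma(\pi_1)\vee\CI(T_2\times T_2)$ is not generated from $\sigma(\pi_1)$ and $\CI(T_1)$, since $T_2\times T_2$-invariant sets genuinely couple the two coordinates (already for $T_1=T_2$ an ergodic rotation, $\CI(T_2\times T_2)$ is generated by $x_0-x_1$, which is measurable with respect to neither coordinate). Making this second step work requires a structural description of $\CI(T_2\times T_2)$ relative to the coordinates of $\mu_1$ (for instance that $\sigma(\pi_1)\vee\CI(T_2\times T_2)$ coincides mod $\mu_1$-null sets with $\sigma(\pi_1)\vee\pi_0^{-1}\mathcal D$ for a suitable $\sigma$-algebra $\mathcal D$ on $X$ compatible with $\CI(T_1)$); this is precisely the content of the lemmas in \cite{H} leading up to Lemma 4, and it is absent from your sketch. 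The parenthetical reduction to the ergodic case via Lemma~\ref{lem:semidecomp} is also not free, since $\mathcal G$ and the conditional expectations for $\mu$ and for its ergodic components must be compared, along the lines of the proof of Proposition~\ref{prop:general}. So either supply this argument in detail or do what the paper does and cite \cite{H}, Lemma 4.
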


\begin{lemma}\label{cz}
$$\mathcal G\supseteq \CI(T_1)\vee\CI(T_2)\ .$$
\end{lemma}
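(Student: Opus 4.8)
The plan is to prove the two inclusions $\CI(T_1)\subseteq\mathcal G$ and $\CI(T_2)\subseteq\mathcal G$ separately; since $\mathcal G$ is a $\sigma$-algebra (Lemma~\ref{lemmah4}), this yields $\mathcal G\supseteq\CI(T_1)\vee\CI(T_2)$. By the description of $\mathcal G$ in Lemma~\ref{lemmah4}, to show that a set $B$ belongs to $\mathcal G$ it suffices to exhibit a function of $(x_{01},x_{10},x_{11})$ that agrees with $\mathbf 1_B(x_{00})$ for $\mu^*$-almost every $x^*=(x_{00},x_{01},x_{10},x_{11})$. Concretely, I will show $\mathbf 1_B(x_{00})=\mathbf 1_B(x_{01})$ $\mu^*$-a.e.\ when $B\in\CI(T_1)$, and $\mathbf 1_B(x_{00})=\mathbf 1_B(x_{10})$ $\mu^*$-a.e.\ when $B\in\CI(T_2)$; in each case the right-hand side is a function of one of the other three coordinates, so $B\in\mathcal G$.

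For $B\in\CI(T_1)$: the projection of $\mu^*$ onto the coordinates $(x_{00},x_{01})$ is $\mu_1=\mu\times_{\CI(T_1)}\mu$, so it is enough to check $\int\bigl(\mathbf 1_B(x_{00})-\mathbf 1_B(x_{01})\bigr)^2\,d\mu_1=0$. Expanding the square (using $\mathbf 1_B^2=\mathbf 1_B$) gives three terms; the two ``diagonal'' ones equal $\mu(B)$ because both marginals of $\mu_1$ are $\mu$, and the cross term equals $\int\E(\mathbf 1_B\mid\CI(T_1))^2\,d\mu=\mu(B)$ since $B$ is $\CI(T_1)$-measurable. The sum $\mu(B)-2\mu(B)+\mu(B)$ vanishes, so $\mathbf 1_B(x_{00})=\mathbf 1_B(x_{01})$ $\mu_1$-a.e., hence $\mu^*$-a.e.

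For $B\in\CI(T_2)$: here I use the second step of the construction, $\mu^*=\mu_1\times_{\CI(T_2\times T_2)}\mu_1$, the two copies of $(X^2,\mu_1)$ carrying the coordinates $(x_{00},x_{01})$ and $(x_{10},x_{11})$. The function $(x_{00},x_{01})\mapsto\mathbf 1_B(x_{00})$ on $(X^2,\mu_1)$ is $T_2\times T_2$-invariant, because $T_2^{-1}B=B$ mod $\mu$ and the first marginal of $\mu_1$ is $\mu$; thus it is $\CI(T_2\times T_2)$-measurable, and likewise $(x_{10},x_{11})\mapsto\mathbf 1_B(x_{10})$. By relative independence of $\mu^*$ over $\CI(T_2\times T_2)$ we then get $\int\mathbf 1_B(x_{00})\mathbf 1_B(x_{10})\,d\mu^*=\int\mathbf 1_B(x_{00})^2\,d\mu_1=\mu(B)$, while the marginals of $\mu^*$ on $x_{00}$ and on $x_{10}$ are both $\mu$; the same expansion as before gives $\int\bigl(\mathbf 1_B(x_{00})-\mathbf 1_B(x_{10})\bigr)^2\,d\mu^*=0$, so $\mathbf 1_B(x_{00})=\mathbf 1_B(x_{10})$ $\mu^*$-a.e.

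The only point that requires care is unwinding the two-step definition of $\mu^*$ and keeping track of which marginals of $\mu_1$ and of $\mu^*$ equal $\mu$, together with the observation that $T_2$-invariance of $B$ forces the coordinate function $\mathbf 1_B(x_{00})$ to be $T_2\times T_2$-invariant on $(X^2,\mu_1)$. Once that bookkeeping is in place, the arguments reduce to the elementary ``expand the square'' computation above, and I expect no serious obstacle.
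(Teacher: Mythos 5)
Your proof is correct and follows essentially the same route as the paper: both deduce directly from the two-step construction of $\mu^*$ that a $T_1$-invariant set satisfies $\mathbf 1_B(x_{00})=\mathbf 1_B(x_{01})$ and a $T_2$-invariant set satisfies $\mathbf 1_B(x_{00})=\mathbf 1_B(x_{10})$ $\mu^*$-almost everywhere, hence lies in $\mathcal G$. You simply spell out the expand-the-square verifications that the paper leaves implicit, and your coordinate bookkeeping is the accurate one (the paper's proof writes $f(x_{00})=f(x_{10})$ in both cases, which is a typo in the $T_1$-invariant case).
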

\begin{proof}
 Let $f\in L^{\infty}(\mu)$. If $f$ is invariant under $T_1$ then, by the construction of the measure $\mu^*$, we have
$f(x_{00})=f(x_{10})$ $\mu^*$-almost everywhere, and $f$ is
measurable with respect to $\mathcal G$. If $f$ is invariant under
$T_2$ then, by a similar reason, $f(x_{00})=f(x_{10})$
$\mu^*$-almost everywhere, and $f$ is measurable with respect
to $\mathcal G$. This proves the lemma.
\end{proof}

Combining Lemma~\ref{lemmah4} and Lemma~\ref{cz}, we immediately get:
\begin{corollary}\label{normimply}
Let $(X, \mu, T_1, T_2)$ be a system, then for every $f\in L^{\infty}(\mu)$, we have
$$
\text{if }\nnorm{f}_{T_1,T_2}=0\text{ then }\E(f\mid \CI(T_1)\vee\CI(T_2))=0\ .
$$
\end{corollary}

\subsection{Magic systems}
We recall the definition of the magic system:
\begin{definition}[\cite{H}]\label{defmagic}
A system $(X,\mu,T_1,T_2)$ is called a \textit{magic system} if for any
$f\in L^{\infty}(\mu)$ satisfying $\E(f|\CI(T_1)\vee\CI(T_2))=0$, we
have $\nnorm f_{T_1,T_2}=0$.
\end{definition}

\begin{corollary}[of Corollary~\ref{normimply}]\label{normequi}
If $(X, \mu, T_1, T_2)$ is a magic system, then
$$\nnorm{f}_{T_1,T_2}=0\text{ if and only if }\E(f\mid \CI(T_1)\vee\CI(T_2))=0\ .$$
\end{corollary}

The next Lemma plays an important role in Section~\ref{constr}.
\begin{lemma}\label{magextlem}
Let $p\colon (Y,\nu,S_1,S_2)\to (X,\mu,T_1,T_2)$ be a factor map and
assume that $(X,\mu,T_1,T_2)$ is magic. Then for every $f\in
L^\infty(\mu)$,
$$\nnorm{f\circ p-\E(f\circ p\mid \CI(S_1))\vee\CI(S_2)}_{S_1,S_2}=0\ .$$
\end{lemma}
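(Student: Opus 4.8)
The plan is to split $f$ according to the $\sigma$-algebra $\CI(T_1)\vee\CI(T_2)$ on $X$ and to treat the two pieces separately: the ``complementary'' piece by the magic hypothesis, and the ``structured'' piece by a soft measurability argument. Write $g:=f\circ p$, and decompose $f=f_0+f_1$ on $X$ with
$f_1:=\E(f\mid\CI(T_1)\vee\CI(T_2))$ and $f_0:=f-f_1$, so that $\E(f_0\mid\CI(T_1)\vee\CI(T_2))=0$. I will show that $g-\E(g\mid\CI(S_1)\vee\CI(S_2))$ equals $f_0\circ p$ up to a null function, and that the latter has vanishing seminorm.

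For the piece $f_0$: since $(X,\mu,T_1,T_2)$ is magic, $\nnorm{f_0}_{T_1,T_2}=0$ by Definition~\ref{defmagic}; applying the factor identity~\eqref{normfactor} to the factor map $p\colon(Y,\nu,S_1,S_2)\to(X,\mu,T_1,T_2)$ gives $\nnorm{f_0\circ p}_{S_1,S_2}=\nnorm{f_0}_{T_1,T_2}=0$. By Corollary~\ref{normimply} applied to the system $(Y,\nu,S_1,S_2)$, it follows that $\E(f_0\circ p\mid\CI(S_1)\vee\CI(S_2))=0$.

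For the piece $f_1$: I would first observe that $p^{-1}\bigl(\CI(T_i)\bigr)\subseteq\CI(S_i)$ for $i=1,2$, because $S_i\circ p=p\circ T_i$ holds $\nu$-a.e., so the preimage of a $T_i$-invariant set is an $S_i$-invariant set; consequently $p^{-1}\bigl(\CI(T_1)\vee\CI(T_2)\bigr)\subseteq\CI(S_1)\vee\CI(S_2)$. Since $f_1$ is measurable with respect to $\CI(T_1)\vee\CI(T_2)$, the function $f_1\circ p$ is measurable with respect to $\CI(S_1)\vee\CI(S_2)$, hence $\E(f_1\circ p\mid\CI(S_1)\vee\CI(S_2))=f_1\circ p$. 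Adding the two contributions, $\E(g\mid\CI(S_1)\vee\CI(S_2))=f_1\circ p$, so $g-\E(g\mid\CI(S_1)\vee\CI(S_2))=f_0\circ p$, whose seminorm is $0$ by the previous paragraph.

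There is no real obstacle here: the argument is just the decomposition $f=f_0+f_1$ combined with the three already-established facts (the magic property, the invariance~\eqref{normfactor} of the box seminorm under factor maps, and Corollary~\ref{normimply}). The only point requiring a line of verification is that the pullback under $p$ of a $T_i$-invariant $\sigma$-algebra is contained in the $S_i$-invariant $\sigma$-algebra, which is immediate from the intertwining relation $S_i\circ p=p\circ T_i$.
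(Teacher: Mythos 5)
Your proof is correct and is essentially the paper's own argument: the same decomposition $f=f_0+f_1$ along $\CI(T_1)\vee\CI(T_2)$, with the magic property plus \eqref{normfactor} plus Corollary~\ref{normimply} handling $f_0$, and measurability of $f_1\circ p$ with respect to $\CI(S_1)\vee\CI(S_2)$ handling $f_1$. The only difference is that you spell out the inclusion $p^{-1}(\CI(T_i))\subseteq\CI(S_i)$, which the paper takes as immediate.
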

\begin{proof}
 We write $f=f_1+f_2$, where $f_1$ is $\CI(T_1))\vee\CI(T_2)$-measurable, and $\E(f_2\mid \CI(T_1))\vee\CI(T_2))=0$. Since $(X,\mu,T_1,T_2)$ is magic, we have $\nnorm{f_2}_{T_1,T_2}=0$. By~\eqref{normfactor}, $\nnorm{f_2\circ p}_{S_1,S_2}=0$. By Lemma~\ref{normimply}, $\E(f_2\circ p\mid \CI(S_1)\vee\CI(S_2))=0$. On the other hand, $f_1\circ p$ is measurable with respect to $\CI(S_1)\vee\CI(S_2)$. Therefore $\E(f\circ p\mid \CI(S_1)\vee\CI(S_2))=f_1\circ p$ and $\nnorm{f\circ p-\E(f\circ p\mid \CI(S_1)\vee\CI(S_2))}_{S_1,S_2}=\nnorm{f_2\circ p}_{S_1,S_2}=0$.
\end{proof}

One of the main results of~\cite{H} is the following theorem:
\begin{theorem}[Theorem 1, \cite{H}]
\label{prop:magicext} The system $(X^*,\mu^*,T_1^*,T_2^*)$ built in Section~\ref{secseminorm} is magic.
\end{theorem}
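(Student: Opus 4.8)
The plan is to translate the magic property into an equality of two sub-$\sigma$-algebras of $X^*$ and then verify it by unwinding the definition of the box measure. By Definition~\ref{defmagic} we must show that $\E_{\mu^*}(g\mid\CI(T_1^*)\vee\CI(T_2^*))=0$ implies $\nnorm{g}_{T_1^*,T_2^*}=0$ for every $g\in L^\infty(\mu^*)$. Applying Lemma~\ref{lemmah4} to the system $(X^*,\mu^*,T_1^*,T_2^*)$ produces a $\sigma$-algebra $\mathcal G^*$ on $X^*$ with $\nnorm{g}_{T_1^*,T_2^*}=0$ if and only if $\E_{\mu^*}(g\mid\mathcal G^*)=0$, and applying Lemma~\ref{cz} to the same system gives $\mathcal G^*\supseteq\CI(T_1^*)\vee\CI(T_2^*)$. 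Since, for sub-$\sigma$-algebras with $\mathcal B\supseteq\mathcal A$, the implication ``$\E(g\mid\mathcal A)=0\Rightarrow\E(g\mid\mathcal B)=0$ for all $g$'' holds precisely when $\mathcal B=\mathcal A$ modulo $\mu^*$-null sets, Theorem~\ref{prop:magicext} is equivalent to the reverse inclusion $\mathcal G^*\subseteq\CI(T_1^*)\vee\CI(T_2^*)$ (mod $\mu^*$).

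To establish that inclusion I would first make everything explicit. Write $\mathcal Z_i=\CI(T_i)$ on $X$. Using $\mu_1=\mu\times_{\mathcal Z_1}\mu$ and $\mu^*=\mu_1\times_{\CI(T_2\times T_2)}\mu_1$, together with the formulas $T_1^*=T_1\times\id\times T_1\times\id$ and $T_2^*=T_2\times T_2\times\id\times\id$, I would identify $\CI(T_1^*)$, $\CI(T_2^*)$ and their join as concrete sub-$\sigma$-algebras of $\CX^{\otimes 4}$ by a direct analysis of $T_1^*$- and $T_2^*$-invariance, in the spirit of Lemma~\ref{claimprod}. Next I would unwind $(\mu^*)^*=\bigl(\mu^*\times_{\CI(T_1^*)}\mu^*\bigr)\times_{\CI(T_2^*\times T_2^*)}\bigl(\mu^*\times_{\CI(T_1^*)}\mu^*\bigr)$ on $(X^*)^4$: writing a point as $(x^*_{00},x^*_{01},x^*_{10},x^*_{11})$ with each $x^*_{ij}\in X^*$, a set $B\subseteq X^*$ lies in $\mathcal G^*$ exactly when $\mathbf{1}_B(x^*_{00})$ is, $(\mu^*)^*$-a.e., a function of $(x^*_{01},x^*_{10},x^*_{11})$. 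Conditioning successively on these coordinate blocks, and using that each relatively independent joining forces the two coordinates it glues to agree on the relevant invariant $\sigma$-algebra, one rewrites $\mathbf{1}_B(x^*_{00})$ as an iterated conditional expectation of $\mathbf{1}_A$; the rigidity of $(\mu^*)^*$ should then force $\mathbf{1}_B$ to depend only on the $T_1$-invariant and $T_2$-invariant data of the four inner coordinates, i.e. to be $\CI(T_1^*)\vee\CI(T_2^*)$-measurable. As a cross-check I would run a second route in parallel: compute $\nnorm{g}_{T_1^*,T_2^*}^4$ directly from the ergodic-average formula of Lemma~\ref{lem:seminorm} applied to $(X^*,T_1^*,T_2^*)$, expand the four shifted copies of $g$ against $\mu^*$, apply the ergodic theorem along the coordinates on which $T_1$ and $T_2$ genuinely act, and reduce the limit to an expression in which only $\E(g\mid\CI(T_1^*)\vee\CI(T_2^*))$ survives, so that its vanishing kills the seminorm.

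The main obstacle is precisely the coincidence bookkeeping in the middle of the second paragraph. The measure $(\mu^*)^*$ is obtained by four successive applications of the relatively-independent-square construction, so its sixteen copies of $X$ are tied together by two interleaved families of invariant-set identifications — one coming from $\CI(T_1)$-type algebras and one from $\CI(T_2)$-type algebras — and these must be analysed simultaneously. Showing that this web of identifications is exactly rigid enough to collapse $\mathcal G^*$ down onto $\CI(T_1^*)\vee\CI(T_2^*)$, and no further, is the technical heart of the argument; the rest — the reduction in the first paragraph, the explicit identification of the invariant algebras, and the appeal to the ergodic theorem — is routine given the results of this section and the explicit form of $T_1^*$ and $T_2^*$.
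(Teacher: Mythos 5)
First, some context: this paper does not prove the statement at all — it is imported verbatim from Host's paper \cite{H} (Theorem 1 there) and used as a black box, so there is no internal proof to compare with; the only meaningful benchmark is the argument in \cite{H}. Your opening reduction is correct and worth keeping: applying Lemma~\ref{lemmah4} and Lemma~\ref{cz} to the system $(X^*,\mu^*,T_1^*,T_2^*)$, and using the standard fact about nested $\sigma$-algebras that you state, the magic property is indeed equivalent to the inclusion $\mathcal G^*\subseteq\CI(T_1^*)\vee\CI(T_2^*)$ modulo $\mu^*$-null sets.

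The problem is that everything after this reduction is a plan rather than a proof, and the plan omits the one idea that makes the theorem true. The sentence asserting that ``the rigidity of $(\mu^*)^*$ should then force $\mathbf{1}_B$ to depend only on the $T_1$-invariant and $T_2$-invariant data'' is precisely the content of the theorem, asserted rather than established; and your ``cross-check'' route cannot work as described, because its only real input is the ergodic theorem applied to the averages of Lemma~\ref{lem:seminorm}, which is available for every system, whereas not every system is magic (take $T_1=T_2$ an irrational rotation: $\CI(T_1)\vee\CI(T_2)$ is trivial, yet the real part of a nonconstant eigenfunction has mean zero and strictly positive seminorm). So any correct argument must use a specific structural feature of the star system, which your sketch never isolates. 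The relevant feature is that $T_1^*$ fixes the coordinates $(x_{01},x_{11})$ and $T_2^*$ fixes $(x_{10},x_{11})$, so the coordinate $\sigma$-algebra $\sigma(x_{01},x_{10},x_{11})$ is contained in $\CI(T_1^*)\vee\CI(T_2^*)$; by the tower property it then suffices to prove that $\E_{\mu^*}\bigl(F\mid\sigma(x_{01},x_{10},x_{11})\bigr)=0$ implies $\nnorm{F}_{\mu^*,T_1^*,T_2^*}=0$, and this last implication — which requires an honest analysis of the measure $(\mu^*)^*$ and is where the work in \cite{H} lies — is exactly the step your proposal leaves unexecuted. As it stands, the proposal is a sound reformulation plus an unproved claim: a genuine gap.
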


Since $\pi_{00}\colon (X^*,\mu^*,T_1^*,T_2^*)\to(X,\mu,T_1,T_2)$ is
a factor map, we have:

\begin{corollary}
\label{cor:existsmagic} Every system $(X,\mu,T_1,T_2)$ admits a
magic extension.
\end{corollary}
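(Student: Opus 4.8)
The plan is simply to take the extension already built in Section~\ref{secseminorm}. Recall that an \emph{extension} of a system $(X,\mu,T_1,T_2)$ is by definition just a system equipped with a factor map onto it, so nothing needs to be arranged beyond producing such a factor map. Given $(X,\mu,T_1,T_2)$, I form $X^*=X^4$ with the measure $\mu^*=\mu_1\times_{\CI(T_2\times T_2)}\mu_1$, where $\mu_1=\mu\times_{\CI(T_1)}\mu$, and the commuting transformations $T_1^*=T_1\times\id\times T_1\times\id$ and $T_2^*=T_2\times T_2\times\id\times\id$, exactly as in Section~\ref{secseminorm}.

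Two facts about this system were already recorded there. First, the projection $\pi_{00}\colon x^*\mapsto x_{00}$ is a factor map from $(X^*,\mu^*,T_1^*,T_2^*)$ onto $(X,\mu,T_1,T_2)$, i.e.\ $\pi_{00}(\mu^*)=\mu$ and $\pi_{00}\circ T_i^*=T_i\circ\pi_{00}$ for $i=1,2$; this is immediate from the construction of $\mu^*$. Second, by Theorem~\ref{prop:magicext}, the system $(X^*,\mu^*,T_1^*,T_2^*)$ is magic. Combining the two, $(X^*,\mu^*,T_1^*,T_2^*)$ together with $\pi_{00}$ is a magic extension of $(X,\mu,T_1,T_2)$, which is exactly the assertion. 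I do not expect any obstacle: all the weight of the argument is carried by Host's theorem (Theorem~\ref{prop:magicext}), which is quoted, and the only thing to check on top of it is the routine verification, already noted when $\mu^*$ was introduced, that $\pi_{00}$ is a factor map.
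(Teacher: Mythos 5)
Your proposal is correct and is exactly the paper's argument: the corollary is stated as an immediate consequence of Theorem~\ref{prop:magicext} together with the observation, already made in Section~\ref{secseminorm}, that $\pi_{00}$ is a factor map from $(X^*,\mu^*,T_1^*,T_2^*)$ onto $(X,\mu,T_1,T_2)$. Nothing further is needed.
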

We can not use this result directly because the system $X^*$ is not ergodic even if $X$ is
ergodic.

\subsection{The existence of ergodic magic extension}
In this Section we show:
\begin{theorem}
\label{th:existext} Every ergodic system $(X,\CX,\mu,T_1,T_2)$
admits an ergodic magic extension.
\end{theorem}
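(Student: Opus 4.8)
The plan is to start from the non-ergodic magic extension $(X^*,\mu^*,T_1^*,T_2^*)$ provided by Theorem~\ref{prop:magicext} and Corollary~\ref{cor:existsmagic}, and then to extract an ergodic piece of it in a way that preserves the magic property. The obvious candidate is an ergodic component of $\mu^*$. So first I would take the ergodic decomposition $\mu^*=\int \mu^*_\omega\,d\mu^*(\omega)$ of $\mu^*$ under the commuting pair $(T_1^*,T_2^*)$. Since $\pi_{00}\colon X^*\to X$ is a factor map and $\mu$ is ergodic, the images $\pi_{00}(\mu^*_\omega)$ are, for $\mu^*$-almost every $\omega$, ergodic $(T_1,T_2)$-invariant measures on $X$ projecting into $\mu$; but an ergodic system has essentially one ergodic invariant measure in its own measure class after disintegration, so in fact $\pi_{00}(\mu^*_\omega)=\mu$ for almost every $\omega$. (More carefully: the pushforward of the ergodic decomposition of $\mu^*$ refines the ergodic decomposition of $\mu$, which is trivial; hence almost every component still projects onto all of $X$ with the right measure.) Thus for almost every $\omega$, the system $(X^*,\mu^*_\omega,T_1^*,T_2^*)$ is an ergodic extension of $(X,\mu,T_1,T_2)$ via $\pi_{00}$.

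It remains to show that \emph{some} such ergodic component is again magic. Here is where Lemma~\ref{lem:semidecomp} does the work. Let $f\in L^\infty(\mu^*)$ with $\E_{\mu^*_\omega}(f\mid\CI_{\mu^*_\omega}(T_1^*)\vee\CI_{\mu^*_\omega}(T_2^*))=0$; I want $\nnorm f_{\mu^*_\omega,T_1^*,T_2^*}=0$. The subtlety is that the conditional-expectation hypothesis is relative to the component $\mu^*_\omega$, not to $\mu^*$, so one cannot simply quote that $X^*$ is magic for the global measure. I would instead argue at the level of the global measure after an averaging step: given any $g\in L^\infty(\mu^*)$, decompose $g = g_1 + g_2$ with $g_1$ measurable with respect to $\CI_{\mu^*}(T_1^*)\vee\CI_{\mu^*}(T_2^*)$ and $\E_{\mu^*}(g_2\mid\CI_{\mu^*}(T_1^*)\vee\CI_{\mu^*}(T_2^*))=0$; then $\nnorm{g_2}_{\mu^*,T_1^*,T_2^*}=0$ because $X^*$ is magic, so by Lemma~\ref{lem:semidecomp}, $\nnorm{g_2}_{\mu^*_\omega,T_1^*,T_2^*}=0$ for almost every $\omega$. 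The function $g_1$ is $\CI_{\mu^*}(T_i^*)$-measurable along each coordinate direction, hence its restriction is $\CI_{\mu^*_\omega}(T_i^*)$-measurable as well, so $\E_{\mu^*_\omega}(g\mid\CI_{\mu^*_\omega}(T_1^*)\vee\CI_{\mu^*_\omega}(T_2^*)) = g_1 + \E_{\mu^*_\omega}(g_2\mid\cdots)$ and hence a function $g$ with vanishing conditional expectation \emph{on the component} satisfies, a.e.\ in $\omega$, $\nnorm{g}_{\mu^*_\omega}=\nnorm{g_2+(g_1-\E_{\mu^*_\omega}(g_2\mid\cdots))}_{\mu^*_\omega}$; since $g_1-\E_{\mu^*_\omega}(g_2\mid\cdots)$ is $\CI_{\mu^*_\omega}(T_1^*)\vee\CI_{\mu^*_\omega}(T_2^*)$-measurable with conditional expectation $-\E_{\mu^*_\omega}(g_2\mid\cdots)$ of $L^2$-norm controlled by $\nnorm{g_2}_{\mu^*_\omega}$ via \eqref{eq:semiL4}-type bounds, everything collapses to $0$ for a.e.\ $\omega$. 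Running this simultaneously over a countable dense family of $g$ in $L^\infty(\mu^*)$ and using continuity of $\nnorm\cdot$ and of conditional expectations, I obtain a set of $\omega$ of full measure for which $(X^*,\mu^*_\omega,T_1^*,T_2^*)$ is magic; pick any such $\omega$ which is also in the full-measure set from the previous paragraph.

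The main obstacle I anticipate is precisely the interplay between the \emph{global} invariant $\sigma$-algebra $\CI_{\mu^*}(T_i^*)$ and the \emph{componentwise} one $\CI_{\mu^*_\omega}(T_i^*)$: the latter can be strictly larger, so the decomposition of a function relative to one is not the decomposition relative to the other, and the argument has to reconcile these. The key technical facts making it go through are Lemma~\ref{lem:semidecomp} (which lets me pass the vanishing of the seminorm from $\mu^*$ down to almost every component) together with Corollary~\ref{normimply} and the inequality \eqref{eq:semiL4} (which control conditional expectations by the seminorm inside each component), plus the standard fact that a countable-dense-family argument suffices since $\nnorm\cdot_{\mu^*_\omega}$ depends measurably on $\omega$. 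Once these are assembled, the resulting component gives the desired ergodic magic extension, proving Theorem~\ref{th:existext}.
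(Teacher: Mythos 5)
Your overall strategy is the same as the paper's: pass to the magic extension $(X^*,\mu^*,T_1^*,T_2^*)$ of Theorem~\ref{prop:magicext}/Corollary~\ref{cor:existsmagic}, take its ergodic decomposition, note that ergodicity of $\mu$ forces $\pi_{00}(\mu^*_\omega)=\mu$ for almost every $\omega$ (exactly the paper's argument for deducing Theorem~\ref{th:existext} from Proposition~\ref{prop:general}), and show that almost every component is still magic (the content of Proposition~\ref{prop:general}). Inside that last step your implementation differs in one pleasant way: where the paper identifies the componentwise conditional expectation on $\CW=\CI(T_1^*)\vee\CI(T_2^*)$ with the global one by building a countable family of strictly invariant functions (Lemma~\ref{lem}) and proving Lemma~\ref{expect}, you can get the same identity from Corollary~\ref{normimply} applied to the component system: for a fixed $g$ with global decomposition $g=g_1+g_2$, Lemma~\ref{lem:semidecomp} and magicness of $\mu^*$ give $\nnorm{g_2}_{\mu^*_\omega,T_1^*,T_2^*}=0$ a.e., hence $\E_{\mu^*_\omega}(g_2\mid\CW)=0$, hence $\E_{\mu^*_\omega}(g\mid\CW)=g_1$ and $g-\E_{\mu^*_\omega}(g\mid\CW)=g_2$ has vanishing seminorm, which is precisely the paper's~\eqref{pgeq3}. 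But your in-line justification of this point is backwards: \eqref{eq:semiL4} bounds the seminorm by the $L^4$ norm, and no inequality of that type controls the $L^2$ norm of a conditional expectation by the seminorm; the step works only via Corollary~\ref{normimply} (which you do name in your summary), so it should be argued that way, not with ``\eqref{eq:semiL4}-type bounds''.

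The one genuine loose end is the countable dense family. No countable family is dense in $L^\infty(\mu^*)$, and, more importantly, a family dense in $L^p(\mu^*)$ need not be dense in $L^p(\mu^*_\omega)$: the ergodic components are in general singular with respect to $\mu^*$ (e.g.\ when the components are lower-dimensional fibers), so density with respect to the global measure buys you nothing on a component. This is exactly the point the paper treats with care: it first reduces to a countably generated $\sigma$-algebra (possible since the space is standard) and then chooses a countable family of bounded functions dense in $L^p(\nu)$ for \emph{every} probability measure $\nu$ on $(X^*,\CX^*)$ simultaneously, which is what makes the approximation over the components legitimate; your closing appeal to measurability of $\omega\mapsto\nnorm{\cdot}_{\mu^*_\omega}$ does not by itself supply this. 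With the family chosen in that stronger sense, your approximation step (using \eqref{eq:semiL4} and the $L^4$-contractivity of conditional expectation, as in the paper) closes the argument, and your proof becomes a correct variant of the paper's, with Corollary~\ref{normimply} on the components substituting for Lemma~\ref{lem} and Lemma~\ref{expect}.
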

Recall that we assume that $(X,\CX,\mu)$ is a standard probability space. Then $X$ admits a dense countably generated $\sigma$-algebra $\CX'$. Substituting this algebra for $\CX$, we reduce to the case that the $\sigma$-algebra $\CX$ is countably
generated. In the sequel, 
$$
 \mu=\int \mu_x\,d\mu(x)
$$
denotes the ergodic decomposition of $\mu$ under $T_1$ and $T_2$.

Theorem~\ref{th:existext} follows from the following proposition:
\begin{proposition}\label{prop:general}
Let $(X,\CX,\mu,T_1,T_2)$ be a magic system.
Then for $\mu$-almost every $x$, the system $(X,\CX,\mu_x,T_1,T_2)$ is magic.
\end{proposition}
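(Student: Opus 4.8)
The plan is to exploit the integral formula for the box seminorm together with the compatibility of the $\mu^*$-construction with ergodic decomposition. The key point is that the seminorm $\nnorm{f}_{\mu,T_1,T_2}$ is governed by the measure $\mu^*=\mu_1\times_{\CI(T_2\times T_2)}\mu_1$, and we must understand how this measure disintegrates when $\mu$ is decomposed into its ergodic components $\mu_x$.

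First I would set up the disintegration. Write $\mu=\int\mu_x\,d\mu(x)$ for the ergodic decomposition under $T_1,T_2$; here $x\mapsto\mu_x$ is measurable with respect to $\CI(T_1)\vee\CI(T_2)$ (up to a null set), and in fact each $\mu_x$ is $(T_1,T_2)$-invariant and ergodic. The main technical step is to show that the box measure disintegrates correspondingly, i.e.
$$\mu^*_{\mu,T_1,T_2}=\int \mu^*_{\mu_x,T_1,T_2}\,d\mu(x)\ .$$
To see this, note that for $\mu$-a.e.\ $x$ the conditional measure $\mu_x$ has $\CI(T_1)\supseteq$ the relevant invariant algebra relations, so $\E(f\mid\CI(T_1))$ is constant $\mu_x$-a.e., which lets one identify $(\mu_x)_1=\mu_x\times_{\CI(T_1)}\mu_x$ with the appropriate piece of $\mu_1$; iterating the same reasoning for the relatively independent join over $\CI(T_2\times T_2)$ gives the displayed formula. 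This is essentially Lemma~\ref{lem:semidecomp} unpacked at the level of measures rather than seminorms, and indeed Lemma~\ref{lem:semidecomp} already gives us
$$\nnorm{f}_{\mu,T_1,T_2}^4=\int\nnorm{f}_{\mu_x,T_1,T_2}^4\,d\mu(x)$$
for every $f\in L^\infty(\mu)$, so I may simply invoke it.

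Now I argue the magic property transfers. Suppose $(X,\mu,T_1,T_2)$ is magic. Let $f\in L^\infty(\mu)$ with $\E_{\mu_x}(f\mid\CI(T_1)\vee\CI(T_2))=0$ for $x$ in a positive-measure set; I want to derive a contradiction with $\nnorm{f}_{\mu_x}\neq 0$ on a positive-measure set. The cleaner route: one shows that for $\mu$-a.e.\ $x$, the function $g_x:=f-\E_{\mu}(f\mid\CI(T_1)\vee\CI(T_2))$ satisfies $\E_{\mu_x}(g_x\mid\CI(T_1)\vee\CI(T_2))=0$, because on the ergodic component $\mu_x$ the algebra $\CI(T_1)\vee\CI(T_2)$ is trivial and $\E_\mu(f\mid\CI(T_1)\vee\CI(T_2))$ is (a.e.) the constant $\int f\,d\mu_x$. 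Hence $\nnorm{f-\E_\mu(f\mid\CI(T_1)\vee\CI(T_2))}_{\mu_x}=\nnorm{f-\int f\,d\mu_x}_{\mu_x}$, and one checks the seminorm is unchanged by subtracting the $\CI(T_1)\vee\CI(T_2)$-measurable part, so $\nnorm{f}_{\mu_x,T_1,T_2}=0$ is equivalent to $\int f\,d\mu_x$ being the only obstruction, i.e.\ to $\E_{\mu_x}(f\mid\CI(T_1)\vee\CI(T_2))=0$. Using Lemma~\ref{lem:semidecomp} with $h:=f-\E_\mu(f\mid\CI(T_1)\vee\CI(T_2))$: since $(X,\mu)$ is magic, $\nnorm{h}_{\mu}=0$, so $\int\nnorm{h}_{\mu_x}^4\,d\mu(x)=0$, hence $\nnorm{h}_{\mu_x}=0$ for a.e.\ $x$; combined with the previous sentence this yields $\E_{\mu_x}(f\mid\CI(T_1)\vee\CI(T_2))=0$ for a.e.\ $x$ implies $\nnorm{f}_{\mu_x}=0$ for a.e.\ $x$, which is exactly the magic property of $(X,\mu_x,T_1,T_2)$.

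The main obstacle I anticipate is the measure-theoretic bookkeeping in the step that $\CI(T_1)\vee\CI(T_2)$ becomes trivial on each ergodic component and that conditional expectations with respect to $\mu$ restrict correctly to conditional expectations with respect to $\mu_x$ — one must be careful that these identifications hold simultaneously for a countable dense family of functions (here the countable generation of $\CX$ is used), and then pass to all $f\in L^\infty(\mu)$ by density, using \eqref{eq:semiL4} to control the seminorm by the $L^4$-norm along the way. Everything else is a routine application of Lemma~\ref{lem:semidecomp}, Corollary~\ref{normimply}, and the definition of magic.
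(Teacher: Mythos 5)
There is a genuine error at the heart of your argument: you claim that on each ergodic component $\mu_x$ the $\sigma$-algebra $\CW=\CI(T_1)\vee\CI(T_2)$ is trivial, so that $\E_\mu(f\mid\CW)$ restricts to the constant $\int f\,d\mu_x$. This is false. The ergodic decomposition is taken over $\CA=\CI(T_1)\wedge\CI(T_2)$, and ergodicity of $\mu_x$ under the pair $(T_1,T_2)$ only trivializes $\CA$; the algebras $\CI(T_1)$, $\CI(T_2)$, and hence $\CW$, are in general highly nontrivial on an ergodic component. For instance, if $X=Y_1\times Y_2$ with $T_1=S_1\times\id$, $T_2=\id\times S_2$ and $S_1,S_2$ ergodic, the system is already ergodic (so it is its own single component), yet $\CI(T_1)\cong\CY_2$, $\CI(T_2)\cong\CY_1$ and $\CW$ is the full product algebra. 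The subsequent sentence asserting that $\nnorm{f}_{\mu_x,T_1,T_2}=0$ is ``equivalent to $\int f\,d\mu_x$ being the only obstruction'' inherits this error, so the core step of your proof collapses.

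What is actually needed in place of the triviality claim is the statement that conditional expectation on $\CW$ is compatible with the ergodic decomposition: for a fixed bounded $f$, if $\widetilde f$ is a $\CW$-measurable representative of $\E_\mu(f\mid\CW)$, then $\E_{\mu_x}(f\mid\CW)=\widetilde f$ $\mu_x$-a.e.\ for $\mu$-almost every $x$ (Lemma~\ref{expect} in the paper). This is not routine bookkeeping: since $\CW$ depends on the dynamics and not on any fixed countable generating family of $\CX$, one first has to construct a countable family of bounded $\CW$-measurable functions that is dense in $L^1(\CW,\nu)$ simultaneously for every invariant measure $\nu$ (Lemma~\ref{lem}, built from ergodic averages of a dense family of $\CX$-measurable functions), and then test $f-\widetilde f$ against it using $\CA\subset\CW$. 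Once this replaces your triviality claim, the rest of your skeleton is exactly the paper's proof: apply the magic hypothesis and Lemma~\ref{lem:semidecomp} to $f_n-\widetilde f_n$ for a countable dense family $\{f_n\}$, intersect the countably many full-measure sets, and pass to general $f$ by $L^4(\mu_x)$-density using \eqref{eq:semiL4}. So the endgame you sketch is fine, but the justification of the key identification of conditional expectations must be repaired as above.
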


\begin{proof}[Proof of Theorem~\ref{th:existext} assuming
Proposition~\ref{prop:general}]

By Corollary~\ref{cor:existsmagic}, there exist
a magic system $(X^*,\mu^*,T_1^*,T_2^*)$ and a factor map
 $\pi\colon X^*\to X$. Let $\mu^*=\int
\mu^*_x\,d\mu^*(x)$ be the ergodic decomposition of $\mu^*$ under
$T_1^*$ and $T_2^*$. Then $\mu=\int
\pi(\mu^*_x)\,d\mu^*(x)$. Since the measures
$\pi(\mu_x^*)$ are invariant under $T_1$ and $T_2$ and since
$\mu$ is ergodic, then for $\mu^*$-almost every $x$ we have
$\pi(\mu_x^*)=\mu$, and thus
$\pi\colon(X^*,\mu^*_x,T_1^*,T_2^*)\to(X,\mu,T_1,T_2)$ is a
 factor map.
\end{proof}

Before giving the proof of Proposition~\ref{prop:general}, we need the following two lemmas. We begin with the notation. We denote
$$
 \CA=\CI(T_1)\wedge\CI(T_2)\text{ and }
\CW=\CI(T_1)\vee\CI(T_2)\ .
$$

We recall that for $\mu$-almost every $x$, the measure $\mu_x$ is
invariant and ergodic under $T_1$ and $T_2$. The map $x\mapsto\mu_x$
is $\CA$-measurable and for every bounded function $f$ on
$X$,
$$
\E_\mu(f\mid\CA)(x)=\int f\,d\mu_x\text{ for $\mu$-almost every
$x$.}
$$

\begin{lemma}
\label{lem}There exists a countable family $\Phi$ of bounded
$\CW$-measurable functions on $X$, which is dense in $L^p(\CW,\nu)$
for every $p\in [1,+\infty)$ and every probability measure $\nu$ on
$(X,\CX)$ which is invariant under $T_1$ and $T_2$.
\end{lemma}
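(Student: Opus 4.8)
The plan is to extract $\Phi$ from a single countable generating algebra of $\CX$, the key being to represent the conditional expectations onto $\CI(T_1)$ and onto $\CI(T_2)$ by Borel functions that do not depend on the measure.

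Since $(X,\CX)$ is standard and (as reduced above) $\CX$ is countably generated, fix a countable algebra $\mathcal{D}\subseteq\CX$ with $\sigma(\mathcal{D})=\CX$. For $B\in\mathcal{D}$ and $i\in\{1,2\}$ set
$$
g_{B,i}(x)=\limsup_{N\to\infty}\frac{1}{N}\sum_{n=0}^{N-1}\mathbf{1}_B(T_i^n x)\qquad(x\in X).
$$
A one-line computation gives $g_{B,i}\circ T_i=g_{B,i}$ everywhere (the difference of the $N$-term averages at $x$ and at $T_ix$ is $O(1/N)$), so $g_{B,i}$ is a $[0,1]$-valued $\CI(T_i)$-measurable function; in particular it is bounded and $\CW$-measurable. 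The point of this definition is that $g_{B,i}$ is one fixed function, yet for \emph{every} probability measure $\nu$ on $(X,\CX)$ invariant under $T_i$, Birkhoff's pointwise ergodic theorem yields $g_{B,i}=\E_\nu(\mathbf{1}_B\mid\CI(T_i))$ $\nu$-almost everywhere.

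Now define $\Phi$ to be the set of all functions $Q(g_{B_1,i_1},\dots,g_{B_m,i_m})$ with $m\ge 0$, $B_1,\dots,B_m\in\mathcal{D}$, $i_1,\dots,i_m\in\{1,2\}$, and $Q$ a polynomial in $m$ variables with rational coefficients (so the constant $1$ lies in $\Phi$). Then $\Phi$ is countable and consists of bounded $\CW$-measurable functions; let $\mathcal{B}$ be the real algebra generated by the $g_{B,i}$ together with the constants, so that $\Phi$ is uniformly dense in $\mathcal{B}$ (approximate the coefficients of $Q$ by rationals on the compact cube $[0,1]^m$). It therefore suffices to show that $\mathcal{B}$ is dense in $L^p(\CW,\nu)$ for every $p\in[1,+\infty)$ and every invariant $\nu$. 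Fix such $\nu$ and put $\mathcal{C}_i=\sigma(\{g_{B,i}:B\in\mathcal{D}\})$, so $\mathcal{C}_i\subseteq\CI(T_i)$. Using $g_{B,i}=\E_\nu(\mathbf{1}_B\mid\CI(T_i))$ and the tower property one gets $\E_\nu(\mathbf{1}_B\mid\mathcal{C}_i)=\E_\nu(\mathbf{1}_B\mid\CI(T_i))$ for all $B\in\mathcal{D}$; since $\mathcal{D}$ is a generating algebra, a functional monotone class argument promotes this to $\E_\nu(\,\cdot\mid\mathcal{C}_i)=\E_\nu(\,\cdot\mid\CI(T_i))$ on $L^1(\nu)$, whence $\mathcal{C}_i=\CI(T_i)$ modulo $\nu$. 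Consequently $\sigma(\mathcal{B})=\mathcal{C}_1\vee\mathcal{C}_2=\CI(T_1)\vee\CI(T_2)=\CW$ modulo $\nu$, and since $\mathcal{B}$ is an algebra of bounded functions containing the constants, a final monotone class argument (the $L^2(\nu)$-closure of $\mathcal{B}$ already contains every bounded $\sigma(\mathcal{B})$-measurable function) shows $\mathcal{B}$ is dense in $L^p(\sigma(\mathcal{B}),\nu)=L^p(\CW,\nu)$ for all $p\in[1,+\infty)$, which completes the proof.

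The only genuine difficulty is the uniformity in $\nu$: for a single invariant measure the statement is trivial, but there is no reason a countable family dense in $L^p(\CW,\nu_0)$ should stay dense for other invariant measures, so the proof must produce the conditional expectations in a measure-free way — this is exactly the role of the $\limsup$-averages $g_{B,i}$ and of the verification that they generate $\CW$ modulo every invariant $\nu$. The two monotone class steps and the polynomial/rational-coefficient approximations are routine.
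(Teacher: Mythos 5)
Your proof is correct and follows essentially the same route as the paper's: both construct measure-independent representatives of $\E_\nu(\cdot\mid\CI(T_i))$ via Birkhoff averages along $T_1$ and $T_2$ applied to a countable generating family, and then take a countable algebra of products of these to obtain density in $L^p(\CW,\nu)$ uniformly over all invariant $\nu$. The only differences are cosmetic (indicators of a generating algebra and rational polynomials with explicit monotone-class bookkeeping, versus the paper's universally dense countable family and finite sums of products).
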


\begin{proof}
Let $\CF=\{f_n\colon n\in \N\}$ be a countable family of bounded
$\CX$-measurable functions on $X$ which is dense in $L^p(\nu)$ for
every $p\in [1,+\infty)$ and every probability measure $\nu$ on
$(X,\CX)$. For each $n$, define
$$
 g_n(x)=\begin{cases}
\displaystyle\lim_{K\to\infty}\frac
1K\sum_{k=0}^{K-1} f_n(T_1^kx)
&\text{if this limit exists;}\\
0 &\text{otherwise.}
\end{cases}
$$
Then each function $g_n$ is bounded and invariant under $T_1$. For
every $n$, let $h_n$ be the function associated to $T_2$ by the same
construction.

Let $\CF_1= \{g_n\colon n\in\N\}$ and  $\CF_2= \{h_n\colon
n\in\N\}$. Then, for $i=1,2$, $\CF_i$ is dense in
$L^p(\CI(T_i),\nu)$  for every $p\in [1,+\infty)$ and every
$T_i$-invariant probability measure $\nu$ on $(X,\CX)$. Let $\Phi$
be family of functions consisting in finite sums of functions of the
form $gh$ with $g\in\CF_1$ and $h\in\CF_2$, then $\Phi$ satisfies
the lemma.
\end{proof}

\begin{lemma}\label{expect}
Let $f$ be a bounded function on $X$ and let $\widetilde
f$ be a $\CW$-measurable representative of $\E_\mu(f\mid\CW)$.
 Then, for
$\mu$-almost every $x$, we have
$$\E_{\mu_x}(f\mid\CW)=\widetilde f\quad\mu_x\text{-almost everywhere.}
$$
\end{lemma}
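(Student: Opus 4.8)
The plan is to prove the identity $\E_{\mu_x}(f\mid\CW)=\widetilde f$ ($\mu_x$-a.e., for $\mu$-a.e. $x$) by testing against the countable dense family $\Phi$ of bounded $\CW$-measurable functions provided by Lemma~\ref{lem}. Fix $\phi\in\Phi$. By the defining property of conditional expectation, for the measure $\mu$ we have $\int (f-\widetilde f)\,\phi\,d\mu=0$. I would like to disintegrate this over $\CA$ and conclude that $\int(f-\widetilde f)\,\phi\,d\mu_x=0$ for $\mu$-a.e.\ $x$; the subtlety is that this needs to hold simultaneously for all $\phi\in\Phi$, which is fine since $\Phi$ is countable, but one must also ensure the "localized" identity genuinely follows from the global one.

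The key point making this work is that $\CW=\CI(T_1)\vee\CI(T_2)\supseteq\CA=\CI(T_1)\wedge\CI(T_2)$, so $\widetilde f$ is in particular $\CA$-measurable is \emph{not} needed; rather, what I use is that for any bounded $\CA$-measurable function $\psi$ and any $\phi\in\Phi$, the function $\psi\phi$ is $\CW$-measurable (since $\CA\subseteq\CW$), hence $\int f\,\psi\phi\,d\mu=\int\widetilde f\,\psi\phi\,d\mu$. Rewriting via the ergodic decomposition, $\int\psi(x)\bigl(\int(f-\widetilde f)\phi\,d\mu_x\bigr)d\mu(x)=0$ because $\psi$ is constant ($=\psi(x)$) on the fiber $\mu_x$ and $x\mapsto\mu_x$ is $\CA$-measurable. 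Since this holds for every bounded $\CA$-measurable $\psi$, we conclude $\int(f-\widetilde f)\phi\,d\mu_x=0$ for $\mu$-a.e.\ $x$. Taking the intersection over the countably many $\phi\in\Phi$, we get a single full-measure set of $x$ on which $\int(f-\widetilde f)\phi\,d\mu_x=0$ for all $\phi\in\Phi$ simultaneously.

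It then remains to upgrade from "orthogonal to $\Phi$" to "$\E_{\mu_x}(f-\widetilde f\mid\CW)=0$". For this I use that $\widetilde f$ is $\CW$-measurable and that, by Lemma~\ref{lem}, $\Phi$ is dense in $L^2(\CW,\mu_x)$ for the ($T_1,T_2$-invariant) measure $\mu_x$ — this is exactly why Lemma~\ref{lem} was stated with the density holding for \emph{every} invariant measure. Hence $f-\widetilde f$ is orthogonal in $L^2(\mu_x)$ to all of $L^2(\CW,\mu_x)$, which means precisely $\E_{\mu_x}(f-\widetilde f\mid\CW)=0$, i.e.\ $\E_{\mu_x}(f\mid\CW)=\widetilde f$ $\mu_x$-a.e. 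The main obstacle to watch is the measure-theoretic bookkeeping: making sure that the "simultaneous in $\phi$" full-measure set can be chosen independently of $\phi$ (handled by countability of $\Phi$), and that the approximation argument in the last step is legitimate for $\mu_x$ for $\mu$-a.e.\ $x$ — both are guaranteed by the way Lemma~\ref{lem} is formulated, so no genuine difficulty remains beyond careful wording.
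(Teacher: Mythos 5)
Your proposal is correct and follows essentially the same route as the paper: test $f-\widetilde f$ against the countable family $\Phi$ of Lemma~\ref{lem}, pass to the ergodic components via $\CA\subset\CW$ (the paper does this directly through $\E_\mu(\,\cdot\mid\CA)(x)=\int\cdot\,d\mu_x$, which is what your testing against bounded $\CA$-measurable $\psi$ re-derives), intersect the countably many full-measure sets, and conclude by density of $\Phi$ in $L^p(\CW,\mu_x)$ (the paper uses $L^1$, you use $L^2$; both are fine since all functions are bounded).
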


\begin{proof}
Let $\Phi=\{\phi_n\colon n\in\N\}$ be the family of functions given
by Lemma~\ref{lem}. For every $n$, $\E_\mu((f-\widetilde
f)\phi_n\mid\CW)=0$ and, since $\CA\subset\CW$,
$\E_\mu((f-\widetilde f)\phi_n\mid\CA)=0$ $\mu$-almost everywhere.
That is, $\int (f-\widetilde f)\phi_n d\mu_x=0$ for $\mu$-almost
every $x$. Thus there exists a set $A_n$ with $\mu(A_n)=1$ such that
$\int (f-\widetilde f)\phi_n\,d\mu_x= 0$ for every $x\in A_n$.

Let $A$ be the intersection of the sets $A_n$ for $n\in\N$. Then
$\mu(A)=1$. Let $x\in A$. We have $\int (f-\widetilde
f)\phi_n\,d\mu_x= 0$ for every $n$ and, by density of the family
$\{\phi_n\colon n\in\N\}$ in $L^1(\CW,\mu_x)$, we have
$\E_{\mu_x}(f-\widetilde f\mid\CW)=0$. Since $\widetilde f$ is
measurable with respect to $\CW$, we have $\E_{\mu_x}(f\mid\CW)=\widetilde f$.
\end{proof}

\begin{proof}[Proof of Proposition \ref{prop:general}]
Let $\CF=\{f_n\colon n\in \N\}$ be a countable family of bounded
$\CX$-measurable functions on $X$ which is dense in $L^p(\nu)$ for
every $p\in [1,+\infty)$ and every probability measure $\nu$ on
$(X,\CX)$.

For each $n$, let $\widetilde f_n$ be  a
$\CW$-measurable  representative of
$\E_{\mu}(f_n\mid\CW)$. By Lemma~\ref{expect}, for every $n$ we have
that $\widetilde f_n=\E_{\mu_x}(f_n\mid\CW)$ $\mu_x$-almost
everywhere for $\mu$-almost every $x$, and there exists a set
$A_n$ such that $\mu(A_n)=1$ and,
$$
 \text{for every $x\in A_n$, }\quad
 \E_{\mu_x}(f_n\mid\CW)=\widetilde f_n
\quad \mu_x\text{-a.e.}
$$

Fix an integer $n\in\N$. Since $\E_{\mu}(f_n-\widetilde
f_n\mid\CW)=0$ and $(X,\mu,T_1,T_2)$ is magic, we have $\nnorm{
f_n-\widetilde f_n}_{\mu,T_1,T_2}=0$.

By Lemma~\ref{lem:semidecomp}, we have
$$
0= \nnorm{ f_n-\widetilde f_n}_{\mu,T_1,T_2}^4=\int \nnorm {f_n-\widetilde
f_n}_{\mu_x,T_1,T_2}^4\,d \mu(x)\ .
$$
So $\nnorm {f_n-\widetilde f_n}_{\mu_x,T_1,T_2}=0$ for $\mu$-almost every
$x$. There exists a set $B_n\subset X$ such that $\mu(B_n)=1$ and
$$
\text{for every }x\in B_n,\quad
 \nnorm{f_n-\widetilde f_n}_{\mu_x,T_1,T_2}=0\ .
$$
Define
$$
 C=\bigcap_{n\in N}(A_n\cap B_n)\ .
$$
We have $\mu(C)=1$. We check that for every $x\in C$ the system
$(X,\mu_x,T_1,T_2)$ is magic.

Let $x\in C$ and $f$ be a bounded measurable function on $X$, we
have to show that $\nnorm{f-\E_{\mu_x}(f\mid\CW)}_{\mu_x,T_1,T_2}=0$.

Since the family $\{f_n\}$ is dense in $L^4(\mu_x)$, there exists a
sequence $(n_i)$ of integers such that $f_{n_i}$ converges to $f$ in
$L^4(\mu_x)$. By~\eqref{eq:semiL4} applied to the measure $\mu_x$, we have
\begin{equation}\label{pgeq1}
\nnorm{f_{n_i}-f}_{\mu_x,T_1,T_2}\to 0\ .
\end{equation}

On the other hand, since $f_{n_i}$ converges to $f$ in $L^4(\mu_x)$, we have
that $\E_{\mu_x}(f_{n_i}\mid\CW)$ converges to $\E_{\mu_x}(f\mid\CW)$ in
$L^4(\mu_x)$ and, as above,
\begin{equation}\label{pgeq2}
\nnorm{  \E_{\mu_x}(f_{n_i}\mid\CW)-
\E_{\mu_x}(f\mid\CW)}_{\mu_x,T_1,T_2}\to 0 \ .
\end{equation}

For every $i$, since $x\in A_{n_i}$  we have that
$\E_{\mu_x}(f_{n_i}\mid\CW)=\widetilde f_{n_i}$ ($\mu_x$-a.e.).
Since $x\in B_{n_i}$, we have $\nnorm{ f_{n_i}-\widetilde
f_{n_i}}_{\mu_x,T_1,T_2}=0$ and thus
\begin{equation}\label{pgeq3}
\nnorm{f_{n_i}-\E_{\mu_x}(f_{n_i}\mid\CW)}_{\mu_x,T_1,T_2}=0\ .
\end{equation}

For every $i$,
\begin{multline*}
\nnorm{f-\E_{\mu_x}(f\mid\CW)}_{\mu_x,T_1,T_2}\\
\leq\nnorm{f_{n_i}-f}_{\mu_x,T_1,T_2}+\nnorm{ \E_{\mu_x}(f_{n_i}\mid\CW)-
\E_{\mu_x}(f\mid\CW)}_{\mu_x,T_1,T_2}\\+\nnorm{f_{n_i}-\E_{\mu_x}(f_{n_i}\mid\CW)}_{\mu_x,T_1,T_2}\ .
\end{multline*}
Combining (\ref{pgeq1}), (\ref{pgeq2}), (\ref{pgeq3}), we get
$\nnorm{f-\E_{\mu_x}(f\mid\CW)}_{\mu_x,T_1,T_2}=0$ for every $x\in C$. This
finishes the proof.
\end{proof}

\section{Proof of Theorem~\ref{mmaina}}\label{sec:general}
In this section, we prove:

\begin{theorem}\label{mmainf}
Let $(X,\mu,T_1,T_2)$ be an ergodic system and $0\leq f\leq 1$. Then
for every $\epsilon>0$, the set
$$\{n\in\ZZ\colon\ \int f\cdot T_1^n f\cdot T_2^n f\, d\mu> (\int f d\mu)^4-\epsilon\}$$
is syndetic.
\end{theorem}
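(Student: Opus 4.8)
The plan is to run the scheme of the proof of Theorem~\ref{prodf} (Section~\ref{sec:prod}), but to reach a product--like situation only after replacing the system by a convenient magic extension. The point is that the seminorm estimates of Lemma~\ref{upbound} and of \eqref{equpbound2}, \eqref{equpbound3} --- which bound weighted averages of $I_n(f,f,f)=\int f\cdot T_1^nf\cdot T_2^nf\,d\mu$ by $\nnorm{\cdot}_{T_1,T_2}$, $\nnorm{\cdot}_{T_2,T_1T_2^{-1}}$, $\nnorm{\cdot}_{T_1,T_1T_2^{-1}}$ applied to the first, second and third argument respectively --- only become useful on a system where these three seminorms are controlled by conditional expectations, i.e.\ which is magic for each of the three pairs $(T_1,T_2)$, $(T_2,T_1T_2^{-1})$, $(T_1,T_1T_2^{-1})$. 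So I would first apply Theorem~\ref{th:existext} three times in succession: an ergodic magic extension for $(T_1,T_2)$, then one of the resulting system for $(T_2,T_1T_2^{-1})$, then one more for $(T_1,T_1T_2^{-1})$. This produces an ergodic system $(\widetilde X,\widetilde\mu,T_1,T_2)$ with a factor map $\pi\colon\widetilde X\to X$ (ergodicity is the same notion for all three pairs, each generating the $\ZZ^2$-action). Since $\int_{\widetilde X}(f\circ\pi)\cdot T_1^n(f\circ\pi)\cdot T_2^n(f\circ\pi)\,d\widetilde\mu=\int_X f\cdot T_1^nf\cdot T_2^nf\,d\mu$, it suffices to prove the theorem on $\widetilde X$; I keep the notation $f,\mu$ for $f\circ\pi,\widetilde\mu$.

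Set $h_1=\E(f\mid\CI(T_1)\vee\CI(T_2))$, $h_2=\E(f\mid\CI(T_2)\vee\CI(T_1T_2^{-1}))$ and $h_3=\E(f\mid\CI(T_1)\vee\CI(T_1T_2^{-1}))$, so $0\le h_j\le 1$. From the magic property of $\widetilde X$ (for the pair it was last built for) together with Lemma~\ref{magextlem} applied to the appropriate intermediate factors (for the other two pairs), one gets $\nnorm{f-h_1}_{T_1,T_2}=0$, $\nnorm{f-h_2}_{T_2,T_1T_2^{-1}}=0$ and $\nnorm{f-h_3}_{T_1,T_1T_2^{-1}}=0$. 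Writing $I_n(f,f,f)-I_n(h_1,h_2,h_3)=I_n(f-h_1,f,f)+I_n(h_1,f-h_2,f)+I_n(h_1,h_2,f-h_3)$ (all arguments bounded by $1$) and applying \eqref{equpbound1}, \eqref{equpbound2}, \eqref{equpbound3} respectively, I obtain: for every $t\in\mathbb{T}$, the averages of $e(nt)\bigl(I_n(f,f,f)-I_n(h_1,h_2,h_3)\bigr)$ converge to $0$. I also record the lower bound that will be used at the end: since $0\le f\le 1$, for any three sub-$\sigma$-algebras $\CX_1,\CX_2,\CX_3$ one has $\int\prod_{j=1}^3\E(f\mid\CX_j)\,d\mu\ge\int f\cdot\prod_{j=1}^3\E(f\mid\CX_j)\,d\mu\ge\bigl(\int f\,d\mu\bigr)^4$ by Lemma~\ref{ineq} with $k=3$ (the first inequality being pointwise).

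The functions $h_1,h_2,h_3$ are measurable with respect to the factor $\mathcal F=\CI(T_1)\vee\CI(T_2)\vee\CI(T_1T_2^{-1})$ of $\widetilde X$, and it remains to turn the weighted average above into a large limit along a syndetic set of $n$, exactly by the device of Section~\ref{sec:prod}. For this I would describe $(\mathcal F,\mu,T_1,T_2)$ explicitly and exhibit inside it a single ergodic group rotation $(Z,\theta,R)$, $R=$ translation by some $\alpha$, built from the eigenvalues common to the systems $\CI(T_1)$, $\CI(T_2)$, $\CI(T_1T_2^{-1})$ --- the analogue of the common factor $\CZ$ of Section~\ref{sec:prod}. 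Arguing as in Lemmas~\ref{chara1} and~\ref{chara2}, I then expect to produce conditional expectations $\widehat h_j=\E(f\mid\CX_j)$ (with $\CX_j$ enlarging the relevant invariant $\sigma$-algebra by $Z$, so that the $\widehat h_j$ vary continuously along $R$) such that the averages of $e(nt)\bigl(I_n(h_1,h_2,h_3)-I_n(\widehat h_1,\widehat h_2,\widehat h_3)\bigr)$ converge to $0$ for every common eigenvalue $e(t)$, and such that $|I_0(\widehat h_1,\widehat h_2,\widehat h_3)-I_n(\widehat h_1,\widehat h_2,\widehat h_3)|<\epsilon/2$ whenever $n\alpha$ lies in a small enough neighbourhood $U$ of $0$ in $Z$ (compare \eqref{nainu}). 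Choosing a non-negative continuous $\chi$ on $Z$ with $\int\chi\,d\theta=1$ and $\chi=0$ off $U$ (Subsection~\ref{prodpart2}), and then running the contradiction argument of the proof of Theorem~\ref{prodf} --- averaging $\chi(n\alpha)\bigl(I_0(\widehat h_1,\widehat h_2,\widehat h_3)-I_n(f,f,f)\bigr)$ over a hypothetical sequence of intervals avoiding the target set, and using that the averages of $\chi(n\alpha)$ tend to $1$ --- shows that $\{n:I_n(f,f,f)>I_0(\widehat h_1,\widehat h_2,\widehat h_3)-\epsilon\}$ is syndetic; since $I_0(\widehat h_1,\widehat h_2,\widehat h_3)\ge(\int f)^4$ by the lower bound above, the set in the statement is syndetic as well.

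The hard part is the last paragraph --- the explicit description of the factor $\CI(T_1)\vee\CI(T_2)\vee\CI(T_1T_2^{-1})$ of the magic extension and the identification, within it, of a single rotation $(Z,\alpha)$ and of $\sigma$-algebras $\CX_1,\CX_2,\CX_3$ along which $h_1,h_2,h_3$ can be linearized simultaneously, i.e.\ along which the three drifts $T_1^n$, $T_2^n$ and $(T_1T_2^{-1})^n$ are controlled at once by $n\alpha\in Z$. This is precisely where the genuinely two--dimensional structure of the problem enters, and where the exponent is forced to be $4$ rather than $3$ --- consistently with Theorem~\ref{counterex}, and in contrast with Theorem~\ref{proda} and with the single--transformation theorem of Bergelson, Host and Kra.
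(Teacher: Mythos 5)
The first half of your plan coincides with the paper's Section~\ref{constr}: three successive ergodic magic extensions, replacement of the three occurrences of $f$ by the conditional expectations on the three $\sigma$-algebras $\CI(T_i)\vee\CI(T_j)$ via Lemma~\ref{upbound} and its variants (this is Proposition~\ref{diff}), and the lower bound $\int\prod_{j}\E(f\mid\CX_j)\,d\mu\geq(\int f\,d\mu)^4$ from Lemma~\ref{ineq} with $k=3$ (Corollary~\ref{j0}). One small correction there: redoing the change of variables, the second argument of $I_n$ is controlled by $\nnorm{\cdot}_{T_1,T_1T_2^{-1}}$ and the third by $\nnorm{\cdot}_{T_2,T_1T_2^{-1}}$ (this is the pairing Proposition~\ref{diff} actually uses; the seminorms displayed in~\eqref{equpbound2} and~\eqref{equpbound3} are interchanged), so your $h_2$ and $h_3$ must trade places. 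This is harmless since Lemma~\ref{ineq} does not care which pair sits in which slot, but be aware of it.

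The genuine gap is everything after ``For this I would describe\dots'': you leave as an expectation precisely the part that carries the content of the theorem, and your sketched route for it is not the one that works most directly. The paper identifies the factor $\CI(\overline T_1)\vee\CI(\overline T_2)\vee\CI(\overline T_3)$ of the extension with a joining $(Y_1\times Y_2\times Y_3,\nu)$, where $Y_i$ realizes $\CI(\overline T_i)$ and the transformations become $S_1=\id\times R_2\times R_3$, $S_2=R_1\times\id\times R_3$ (Lemma~\ref{equivatoy}); it then proves that $\nu$ is the conditionally independent joining over the common Kronecker factor $Z$ of $Y_1,Y_2,Y_3$ (Proposition~\ref{nudescri}) and that its projection to $Z^3$ is the Haar measure of the subgroup $\{z_1+z_2-z_3=0\}$ (Proposition~\ref{mdescri}). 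With this description no second round of projections $\widehat h_j$ is needed at all: since $g_0,g_1,g_2$ each depend on only two of the three coordinates, the $n$-dependence of $J_n=\int g_0\cdot S_1^n g_1\cdot S_2^n g_2\,d\nu$ falls entirely on the third coordinate through $R_3^n$, and the structure of $\nu$ yields directly that $|J_0-J_n|<\epsilon$ whenever $n\alpha\in U$ (Proposition~\ref{existu}); the Frantzikinakis weighting with $\chi$ then finishes as you say. Your alternative plan---enlarging the invariant $\sigma$-algebras by $Z$ and proving analogues of Lemmas~\ref{chara1} and~\ref{chara2}---would in any case require exactly this structural analysis of the joining (in the product case Lemma~\ref{chara1} rests on $\CI(S_1\times S_2)\subset\CZ\times\CZ$, whose counterpart here is Propositions~\ref{nudescri} and~\ref{mdescri}), and you give no argument for it; as written, the proposal stops at the point where the genuinely two-dimensional difficulty begins, as you yourself acknowledge.
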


Theorem~\ref{mmaina} follows from Theorem~\ref{mmainf} with $f=\mathbf{1}_A$.

In this section, we fix the ergodic system $(X,\mu, T_1,T_2)$, and the function $f$ as in Theorem~\ref{mmainf}, and for every $n$, we write
$$I_n\colon=\int f\cdot T_1^n f\cdot T_2^n f\, d\mu\ .$$

\subsection{Construction of ergodic magic extensions}\label{constr}
We write $T_3=T_1T_2^{-1}$. We use Theorem~\ref{th:existext} three times:

Let $(X',\mu',T_1',T_2')$ be an ergodic magic extension of $(X,\mu,
T_1,T_2)$ and let $\pi'\colon X'\to X$ be the factor map. Write
$T_3'=T_1'T_2'^{-1}$. Then the system $(X',\mu',T_1',T_3')$ is ergodic.

Let $(X'',\mu'',T_1'',T_3'')$ be an ergodic magic extension of
$(X',\mu', T_1',T_3')$ and let $\pi''\colon X''\to X'$ be the
factor map. Write $T_2''=T_1''T_3''^{-1}$. Then the system
$(X'',\mu'',T_2'',T_3'')$ is ergodic.

Let $(\overline X,\bar \mu,\overline T_2,\overline T_3)$ be an ergodic magic extension of
$(X'',\mu'', T_2'',T_3'')$ and let $\bar{\pi}\colon \overline X\to X''$ be the
factor map. Write $\overline T_1=\overline T_2\overline T_3$. Then the three systems $(\overline X,\bar \mu,\overline T_2,\overline T_3)$, $(\overline X,\bar \mu,\overline T_1,\overline T_3)$ and
$(\overline X,\bar \mu,\overline T_1,\overline T_2)$ are all ergodic, and  $\pi'\circ\pi''\circ\bar{\pi}$ is a factor map from $(\overline X,\bar \mu,\overline T_1,\overline T_2,\overline T_3))$ to $(X,\mu,T_1,T_2,T_3)$.

In the sequel, we write
\begin{align}\label{hggg}
 \begin{split}
h&=\colon f\circ \pi'\circ\pi''\circ \bar{\pi}\,\\
g_0&=\colon\E(h\mid\CI(\overline{T}_1)\vee\CI(\overline{T}_2))\ ,\\
g_1&=\colon\E(h\mid \CI(\overline {T}_1)\vee\CI(\overline{T}_3))\ ,\\
g_2&=\colon\E(h\mid \CI(\overline{T}_2)\vee\CI(\overline{T}_3))\ .
 \end{split}
\end{align}

For every $n$, and all bounded functions $\eta,\varphi,\psi$ on
$\overline{X}$, we define
$$\bar{I}_n(\eta,\varphi,\psi)\colon=\int \eta \cdot
\overline{T}^{n}_1 \varphi \cdot \overline{T}^{n}_2 \psi\, d\bar{\mu}\
.$$ In particular, 
\begin{equation}\label{in}
 \bar{I}_n(h,h,h)=I_n\ .
\end{equation}
We write also
$$J_n\colon=\bar{I}_n(g_0,g_1,g_2)\ .$$
We have

\begin{proposition}\label{diff}
Let $t\in \mathbb T$. Then the averages of the difference
$e(nt)\big(I_n-J_n\big)$ converge to $0$.
\end{proposition}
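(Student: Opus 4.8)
The plan is to show that replacing $h$ by the $g_i$ one factor at a time changes the weighted average $\frac1{N-M}\sum_{n}e(nt)\bar I_n(\cdot,\cdot,\cdot)$ by something tending to $0$, using Lemma~\ref{upbound} together with the fact that the box seminorms of the ``error'' functions vanish. Concretely, by \eqref{in} we have $I_n=\bar I_n(h,h,h)$ and $J_n=\bar I_n(g_0,g_1,g_2)$, so it suffices to treat the telescoping differences $\bar I_n(h,h,h)-\bar I_n(g_0,h,h)$, $\bar I_n(g_0,h,h)-\bar I_n(g_0,g_1,h)$ and $\bar I_n(g_0,g_1,h)-\bar I_n(g_0,g_1,g_2)$, each being $\bar I_n$ of a triple in which one coordinate is an error term $h-g_i$ and the others are bounded by $1$ (note $|g_i|\le 1$). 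By multilinearity of $\bar I_n$ in its three arguments, each difference is $\bar I_n$ evaluated at such a triple.

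For the first difference I would apply Lemma~\ref{upbound} directly, with the pair of transformations $(\overline T_1,\overline T_2)$: it bounds the $\limsup$ of $|\frac1{N-M}\sum_n e(nt)\bar I_n(h-g_0,h,h)|$ by $\nnorm{h-g_0}_{\overline T_1,\overline T_2}$. Since $g_0=\E(h\mid\CI(\overline T_1)\vee\CI(\overline T_2))$ and $(\overline X,\bar\mu,\overline T_1,\overline T_2)$ is magic, Corollary~\ref{normequi} gives $\nnorm{h-g_0}_{\overline T_1,\overline T_2}=0$, so this difference averages to $0$. For the second difference, I rewrite $\bar I_n(g_0,h-g_1,h)$ using the identity $\bar I_n(f_0,f_1,f_2)=\int \overline T_2^{-n}f_0\cdot(\overline T_1\overline T_2^{-1})^n f_1\cdot f_2\,d\bar\mu = \int \overline T_2^{-n}f_0\cdot \overline T_3^{n}f_1\cdot f_2\,d\bar\mu$ (recall $\overline T_3=\overline T_1\overline T_2^{-1}=\overline T_2^{-1}\overline T_1$, and $\overline T_3^{n}=(\overline T_1\overline T_2^{-1})^n$ since $\overline T_1,\overline T_2$ commute), and apply Lemma~\ref{upbound} to the pair $(\overline T_2,\overline T_3)$ and the ``middle'' coordinate — this is exactly the passage recorded in \eqref{equpbound2}. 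That bounds the relevant $\limsup$ by $\nnorm{h-g_1}_{\overline T_2,\overline T_3}$, which I should match with $\nnorm{\cdot}_{\overline T_1,\overline T_3}$: using the symmetry $\nnorm{\cdot}_{S_1,S_2}=\nnorm{\cdot}_{S_2,S_1}$ from Proposition~2 of~\cite{H} (quoted in Section~\ref{secseminorm}) and the invariance under replacing a transformation by its inverse, $\nnorm{h-g_1}_{\overline T_2,\overline T_3}=\nnorm{h-g_1}_{\overline T_3,\overline T_1}=\nnorm{h-g_1}_{\overline T_1,\overline T_3}$; since $g_1=\E(h\mid\CI(\overline T_1)\vee\CI(\overline T_3))$ and $(\overline X,\bar\mu,\overline T_1,\overline T_3)$ is magic, Corollary~\ref{normequi} makes this $0$. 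The third difference $\bar I_n(g_0,g_1,h-g_2)$ is handled symmetrically: rewrite so that $h-g_2$ sits in the last coordinate and apply \eqref{equpbound3}, i.e.\ Lemma~\ref{upbound} for the pair $(\overline T_1,\overline T_3)$ acting on the third coordinate; after the same symmetry/inverse bookkeeping this gives the bound $\nnorm{h-g_2}_{\overline T_2,\overline T_3}$, which vanishes because $g_2=\E(h\mid\CI(\overline T_2)\vee\CI(\overline T_3))$ and $(\overline X,\bar\mu,\overline T_2,\overline T_3)$ is magic.

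Summing the three estimates and using the triangle inequality for $\limsup$ yields $\limsup_{N-M\to\infty}\bigl|\frac1{N-M}\sum_{n\in[M,N)}e(nt)(I_n-J_n)\bigr|=0$, which is the assertion (for a genuinely convergent average one notes that in fact each term, being squeezed between $0$ and $0$, has average $0$ along every sequence of intervals of increasing length). I expect the only real care needed — not a deep obstacle, but the place where a slip is easiest — is the bookkeeping in steps two and three: correctly tracking which pair of transformations Lemma~\ref{upbound} applies to after the substitution $\overline T_3=\overline T_1\overline T_2^{-1}$, and then using the two symmetry properties of $\nnorm{\cdot}$ to land on precisely the seminorm that the appropriate magic system (among the three $(\overline T_i,\overline T_j)$) kills. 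Everything else is multilinearity of $\bar I_n$, the uniform bounds $|h|,|g_i|\le 1$, and a direct appeal to Corollary~\ref{normequi}.
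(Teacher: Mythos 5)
Your overall strategy (telescoping through the three coordinates and killing each difference with Lemma~\ref{upbound} plus a vanishing box seminorm) is the same as the paper's, but the justification of two of the three vanishing seminorms has a genuine gap. You assert that $(\overline X,\bar\mu,\overline T_1,\overline T_2)$ and $(\overline X,\bar\mu,\overline T_1,\overline T_3)$ are magic and invoke Corollary~\ref{normequi}. The construction in Section~\ref{constr} gives no such thing: only the outermost extension, i.e.\ the pair $(\overline T_2,\overline T_3)$ on $\overline X$, is magic. The pairs $(T_1',T_2')$ and $(T_1'',T_3'')$ are magic only on the intermediate systems $X'$ and $X''$, and magicity does not pass to extensions (every system is an extension of the one-point system, which is trivially magic, yet not every system is magic --- that is the whole point of Host's construction). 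The correct argument, and the reason the paper builds a tower of \emph{three} extensions and keeps track of $h=f\circ\pi'\circ\pi''\circ\bar\pi$ being a \emph{lifted} function, is Lemma~\ref{magextlem}: since $h$ is lifted from $X'$ (resp.\ $X''$) and $(X',\mu',T_1',T_2')$ (resp.\ $(X'',\mu'',T_1'',T_3'')$) is magic, one gets $\nnorm{h-g_0}_{\overline T_1,\overline T_2}=0$ and $\nnorm{h-g_1}_{\overline T_1,\overline T_3}=0$. Your conclusion for those two steps is true, but not for the reason you give; only $\nnorm{h-g_2}_{\overline T_2,\overline T_3}=0$ follows from magicity of a system on $\overline X$ itself (via Definition~\ref{defmagic}).

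There is also an error in your seminorm bookkeeping for the middle (and third) step. The identity you use, $\nnorm{h-g_1}_{\overline T_2,\overline T_3}=\nnorm{h-g_1}_{\overline T_1,\overline T_3}$, is not a consequence of the symmetry $\nnorm{\cdot}_{S_1,S_2}=\nnorm{\cdot}_{S_2,S_1}$ together with \eqref{muchangetrans}: those operations only permute or invert the \emph{same} two transformations, and $\overline T_2\neq\overline T_1^{\pm1}$, so the two seminorms are genuinely different objects. The correct route is to change variables so that the error function sits in the untranslated slot, since Lemma~\ref{upbound} bounds by the seminorm of that function: writing $\bar I_n(f_0,f_1,f_2)=\int f_1\cdot \overline T_1^{-n}f_0\cdot(\overline T_2\overline T_1^{-1})^n f_2\,d\bar\mu$ and using \eqref{muchangetrans} gives the bound $\nnorm{f_1}_{\overline T_1,\overline T_3}$ for the middle slot, and similarly $\nnorm{f_2}_{\overline T_2,\overline T_3}$ for the last slot; these are exactly the seminorms the paper's proof uses (note that, read literally, the printed \eqref{equpbound2}--\eqref{equpbound3} have their transformation subscripts interchanged, so ``matching'' them by force with an invalid seminorm identity is the wrong fix). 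With these two repairs --- Lemma~\ref{magextlem} in place of the unfounded magicity claims, and the correct substitution instead of the false seminorm identity --- your telescoping argument becomes the paper's proof.
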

\begin{proof}
Since $(\overline{X},\bar{\mu},\overline{T}_2,\bar{T}_3)$ is magic, by
Definition~\ref{defmagic} of magic systems and the definition of $g_2$,
$$\nnorm{h-g_2}_{\overline{T}_2,\overline{T}_3}=0\ .$$
Then by Lemma~\ref{upbound}, the averages of the difference of
$$e(nt)\big(\bar{I}_n(h,h,h)-\bar{I}_n(h,h,g_2)\big)$$ converge to
$0$. Since $(X'',\mu'',T_1'',T_3'')$ is magic, by Lemma~\ref{magextlem},
$$\nnorm{h-g_1}_{\overline{T}_1,\overline{T}_3}=0\ .$$
Then by Lemma~\ref{upbound}, the averages of the difference of
$$e(nt)\big(\bar{I}_n(h,h,g_2)-\bar{I}_n(h,g_1,g_2)\big)$$ converge
to $0$. Since $(X',\mu',T_1',T_2')$ is magic, by the same argument
as above, the averages of the difference of
$$e(nt)\big(\bar{I}_n(h,g_1,g_2)-J_n\big)$$ converge to $0$. Summing up
all the results above, we get the announced result.
\end{proof}

In the sequel, we write $$\overline{\mathcal
M}\colon=\CI(\overline{T}_1)\vee
\CI(\overline{T}_2)\vee\CI(\overline{T}_3)\ ,$$ and $$g\colon = \E(h\mid
\overline{\mathcal M})\ .$$ We remark that
\begin{align*}
\E(g\mid\CI(\overline{T}_1)\vee\CI(\overline{T}_2))=g_0\ ,\\ 
\E(g\mid\CI(\overline{T}_1)\vee\CI(\overline{T}_3))=g_1\ , \\
\E(g\mid\CI(\overline{T}_2)\vee\CI(\overline{T}_3))=g_2\ .
\end{align*}
Furthermore
\begin{equation}\label{eq:integraleq}
 \int g d\bar{\mu}=\int h d\bar{\mu}=\int f d\mu\ .
\end{equation}

\subsection{Reduction to a special system}
For $i=1,2,3$, let $(Y_i,\mathcal Y_i, \nu_i)$ be a factor of $\overline{X}$ which is
isomorphic to $(\overline{X},\CI(\overline{T}_i),\bar{\mu})$. Denote by $\pi_i:
\overline{X}\rightarrow Y_i$ the factor map. Then $\nu_i:=\pi_i(\bar{\mu})$.

Since for $j=1,2,3$, every $\sigma$-algebra $\CI(\overline{T}_i)$ is
invariant under the transformation $\overline{T}_j$, then each of these transformations induces a
measure preserving transformation on each factor $Y_i$.

The transformation $\overline{T}_1$ induces the identity on $Y_1$ and the transformations $\overline{T}_2$ and $\overline{T}_3$ induce the same transformation on $Y_1$, which we write $R_1$. Therefore we have
$$R_1\circ\pi_1=\pi_1\circ \overline{T}_2=\pi_1\circ \overline{T}_3\ .$$
By ergodicity of $(\overline{X},\overline{\CX},\bar{\mu},\overline{T}_1,\overline{T}_2)$, the system $(\overline{X},{\CI(\overline{T}_1)},\bar{\mu},\overline{T}_2)$ is ergodic and thus $(Y_1,\nu_1,R_1)$ is ergodic.

In the same way, we get a measure preserving transformation $R_2$ on
$Y_2$ and a measure preserving transformation $R_3$ on $Y_3$ with
$$R_2\circ\pi_2=\pi_2\circ\overline{T}_1=\pi_2\circ\overline{T}_3^{-1}\ ,$$
$$R_3\circ\pi_3=\pi_3\circ\overline{T}_1=\pi_3\circ\overline{T}_2\ .$$

As above, we have $(Y_2,\nu_2,R_2)$ and $(Y_3,\nu_3,R_3)$ are ergodic.

Let $Y=Y_1\times Y_2\times Y_3$ and $\CY$ be the product
$\sigma$-algebra on $Y$. Define $\pi: (\overline{X},\overline{\mathcal M})\rightarrow (Y,\CY)$ by
$\pi=\pi_1\times\pi_2\times\pi_3$. Let $\nu$ be the image of $\mu$ under $\pi$. Define
\begin{align}\label{sdef}
 \begin{split}
S_1&=\id\times R_2\times R_3\ ,\\
S_2&=R_1\times \id\times R_3\ ,\\
S_3&=R_1\times R_2^{-1}\times \id\ .
 \end{split}
\end{align}

We have
\begin{lemma}\label{equivatoy}
 $\pi$ is an isomorphism from $(\overline{X},\overline{\mathcal M},
\bar{\mu},\overline{T}_1,\overline{T}_2,\overline{T}_3)$ to 
\newline $(Y,\CY,\nu,S_1,S_2,S_3)$.
\end{lemma}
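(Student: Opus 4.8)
The plan is to verify three things: that $\pi$ intertwines the transformations (i.e.\ $S_i\circ\pi=\pi\circ\overline T_i$ for $i=1,2,3$), that $\pi$ pushes $\bar\mu$ forward to $\nu$ on the factor $(\overline X,\overline{\mathcal M})$, and that $\pi$ is in fact an \emph{isomorphism} of measure spaces, not merely a factor map — that is, that $\pi$ generates $\overline{\mathcal M}$. The intertwining is immediate from the defining relations of the $R_i$. For instance $S_1\circ\pi=(\id\times R_2\times R_3)\circ(\pi_1\times\pi_2\times\pi_3)=\pi_1\times(R_2\circ\pi_2)\times(R_3\circ\pi_3)$, and by construction $\pi_1=\pi_1\circ\overline T_1$ (since $\overline T_1$ induces the identity on $Y_1$), $R_2\circ\pi_2=\pi_2\circ\overline T_1$, and $R_3\circ\pi_3=\pi_3\circ\overline T_1$; hence $S_1\circ\pi=\pi\circ\overline T_1$. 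The same bookkeeping, using $R_2\circ\pi_2=\pi_2\circ\overline T_3^{-1}$ and $R_3\circ\pi_3=\pi_3\circ\overline T_2$ and $R_1\circ\pi_1=\pi_1\circ\overline T_2=\pi_1\circ\overline T_3$, gives $S_2\circ\pi=\pi\circ\overline T_2$ and $S_3\circ\pi=\pi\circ\overline T_3$. That $\pi(\bar\mu)=\nu$ on $(Y,\CY)$ is the definition of $\nu$.

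The substantive point is that $\pi$ is an isomorphism when we restrict the source to the factor $\overline{\mathcal M}=\CI(\overline T_1)\vee\CI(\overline T_2)\vee\CI(\overline T_3)$. Here I would argue as follows. Each $\pi_i$ is, by its very definition, an isomorphism from $(\overline X,\CI(\overline T_i),\bar\mu)$ to $(Y_i,\mathcal Y_i,\nu_i)$; in particular the $\sigma$-algebra $\pi_i^{-1}(\mathcal Y_i)$ coincides with $\CI(\overline T_i)$ modulo $\bar\mu$-null sets. Consequently $\pi^{-1}(\CY)=\pi_1^{-1}(\mathcal Y_1)\vee\pi_2^{-1}(\mathcal Y_2)\vee\pi_3^{-1}(\mathcal Y_3)=\CI(\overline T_1)\vee\CI(\overline T_2)\vee\CI(\overline T_3)=\overline{\mathcal M}$, again modulo null sets. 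Since $\pi$ is measurable, measure-preserving, and its pullback $\sigma$-algebra is exactly $\overline{\mathcal M}$, the map $\pi$ is an isomorphism of the measure systems $(\overline X,\overline{\mathcal M},\bar\mu)$ and $(Y,\CY,\nu)$ (both probability spaces being standard), and by the previous paragraph this isomorphism respects all three transformations.

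The main obstacle is purely a matter of being careful about what ``isomorphic factor'' means: one must make sure that $\pi=\pi_1\times\pi_2\times\pi_3$, which a priori is only a factor map onto its image, really is injective modulo null sets on the sub-$\sigma$-algebra $\overline{\mathcal M}$ — equivalently, that $\overline{\mathcal M}$ is generated by the three coordinate projections and no finer information is lost or gained. This reduces, as indicated above, to the single observation that the join of the three pullback $\sigma$-algebras $\pi_i^{-1}(\mathcal Y_i)$ is exactly the join of the three invariant $\sigma$-algebras $\CI(\overline T_i)$, which is true essentially by construction of the $Y_i$. Once that identification is in hand, standard facts about standard probability spaces (a measure-preserving map inducing an isomorphism of the relevant $\sigma$-algebras is a point isomorphism mod null sets) finish the argument, and the commutation with $S_1,S_2,S_3$ follows from the intertwining relations already verified.
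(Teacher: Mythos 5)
Your proposal is correct: the paper states Lemma~\ref{equivatoy} without proof, treating it as immediate from the construction, and your argument supplies exactly the intended verification — the intertwining relations $S_i\circ\pi=\pi\circ\overline T_i$ follow from the defining relations of the $R_i$, and the isomorphism (mod $\bar\mu$-null sets) follows from $\pi_i^{-1}(\CY_i)=\CI(\overline T_i)$, hence $\pi^{-1}(\CY)=\CI(\overline T_1)\vee\CI(\overline T_2)\vee\CI(\overline T_3)=\overline{\mathcal M}$, together with $\pi(\bar\mu)=\nu$ and standardness of the spaces. No gaps; this matches the paper's (implicit) reasoning.
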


We can thus identify these two systems and consider $g$ as a
function on $Y$. Then we have 
\begin{align*}
g_0=\E(g\mid Y_1\times Y_2)\ ,\\
g_1=\E(g\mid Y_1\times Y_3)\ ,\\
g_2=\E(g\mid Y_2\times Y_3)\ .
\end{align*}
By~\eqref{eq:integraleq}, we have
\begin{equation}\label{eq:integraleq1}
 \int g d\nu =\int f d\mu\ .
\end{equation}

\subsection{Description of the special system}\label{descriy}
We recall the properties that will be used in the sequel:
\begin{enumerate}[(a)]
 \item For $i=1,2,3$, $(Y_i,\CY_i,\nu_i,R_i)$ is an ergodic system.
 \item $Y=Y_1\times Y_2\times Y_3$ is endowed with the product $\sigma$-algebra $\CY$. The projection $Y\to Y_i$ is written $\pi_i$.
 \item $\nu$ is a joining of $\nu_1$, $\nu_2$ and $\nu_3$, meaning that, for $i=1,2,3$, the projection of $\nu$ on $Y_i$ is equal to $\nu_i$. Moreover $\nu$ is invariant under the transformations $S_1, S_2, S_3$ defined in~\eqref{sdef}.
 \item\label{indentialg} For $i=1,2,3$, $\CY_i=\CI(S_i)$.
 \item\label{yerg} $(Y,\CY,\nu,S_1,S_2)$ is ergodic.
\end{enumerate}

\begin{remark}
For $i\neq j$, $Y$ is ergodic under $S_i$ and $S_j$.
By~\eqref{indentialg}, the $\sigma$-algebra $\CY_i$ and $\CY_j$ are
independent. In other words, the projection of $\nu$ on $Y_i\times
Y_j$ is equal to $\nu_i\times\nu_j$.
\end{remark}

The following proposition is a more precise description of $\nu$:
\begin{proposition}\label{nudescri}
Let $(Z,\theta, R)$ be the common factor of the systems
$Y_1,Y_2,Y_3$ spanned by the eigenfunctions corresponding to the
common eigenvalues of these three systems. Then for any $\alpha_1\in L^{\infty}(Y_1)$,
$\alpha_2\in L^{\infty}(Y_2)$ and $\alpha_3\in L^{\infty}(Y_3)$, we
have
\begin{equation}\label{nudef}
\int \alpha_1(y_1)\alpha_2(y_2)\alpha_3(y_3)\, d\nu=\int
\E(\alpha_1\mid Z)(z_1)\E(\alpha_2\mid Z)(z_2)\E(\alpha_3\mid
Z)(z_3)\, d m
\end{equation}
where $m$ is the image of $\nu$ on $Z\times Z\times Z$. In other words,
$\nu$ is the conditionally independent product measure over $Z\times
Z\times Z$.
\end{proposition}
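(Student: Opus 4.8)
The plan is to prove Proposition~\ref{nudescri} by reducing the joining $\nu$ on $Y_1\times Y_2\times Y_3$ to its behaviour on the maximal common factor $Z$, using the invariance of $\nu$ under $S_1$ and $S_2$ together with the identifications $\CY_i=\CI(S_i)$. Recall that $S_2=R_1\times\id\times R_3$, so $\CY_2=\CI(S_2)$ being the set of $S_2$-invariant functions means that a function depending only on the $Y_2$-coordinate is exactly an $S_2$-invariant function; more usefully, every $S_2$-invariant function is $\CY_2$-measurable, i.e. depends only on $y_2$. The same holds for $S_1$ and $\CY_1$, and for $S_3$ and $\CY_3$. The strategy is to show that we may replace each $\alpha_i$ by $\E(\alpha_i\mid Z)$ one coordinate at a time, incurring no error, and that once all three factors live on $Z$ the integral only sees the pushforward $m$ of $\nu$ to $Z\times Z\times Z$.

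\medskip

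First I would establish the one-coordinate replacement. Fix $\alpha_2\in L^\infty(Y_2)$ with $\E(\alpha_2\mid Z)=0$; I claim that for all $\alpha_1\in L^\infty(Y_1)$, $\alpha_3\in L^\infty(Y_3)$ we have $\int \alpha_1\otimes\alpha_2\otimes\alpha_3\,d\nu=0$. To see this, apply the ergodic theorem for $S_2=R_1\times\id\times R_3$ on $(Y,\nu)$: since $(Y,\nu,S_1,S_2)$ is ergodic, in particular $(Y,\nu,S_2)$ has its invariant $\sigma$-algebra equal to $\CI(S_2)$, which by property~\eqref{indentialg} is $\CY_2$. Hence the averages $\frac1N\sum_{n=0}^{N-1} S_2^n(\alpha_1\otimes\alpha_2\otimes\alpha_3)$ converge in $L^2(\nu)$ to $\E(\alpha_1\otimes\alpha_2\otimes\alpha_3\mid\CY_2)$. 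But $S_2^n(\alpha_1\otimes\alpha_2\otimes\alpha_3)=(R_1^n\alpha_1)\otimes\alpha_2\otimes(R_3^n\alpha_3)$, and by the ergodic theorem applied to $R_1$ on $Y_1$ and to $R_3$ on $Y_3$ together with the fact that $\CI(R_1)$, $\CI(R_3)$ reduce to the Kronecker/eigenfunction parts, these averages converge to the relatively-independent expression involving $\E(\alpha_1\mid\K_1)$ and $\E(\alpha_3\mid\K_3)$; integrating against $1$ and using that $Z$ is precisely the common factor (so the eigenfunction components that survive the joining are exactly those coming from $Z$), the contribution of the mean-zero-over-$Z$ function $\alpha_2$ vanishes. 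This is essentially the same computation as in Lemma~\ref{chara1} and Lemma~\ref{claimprod}, now with three factors; I would in fact prove a three-factor analogue of Lemma~\ref{claimprod} stating that $\CI(R_1\times\id\times R_3)\cap(\CY_1\otimes\CY_3)$-type components are carried by $Z\times\{*\}\times Z$, i.e. the common eigenvalue structure forces factorization through $Z$.

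\medskip

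Granting the one-coordinate replacement for each of the three coordinates (the arguments for $\alpha_1$ via $S_1=\id\times R_2\times R_3$ and for $\alpha_3$ via $S_3=R_1\times R_2^{-1}\times\id$ are symmetric), I decompose $\alpha_i=\E(\alpha_i\mid Z)+(\alpha_i-\E(\alpha_i\mid Z))$ and expand the product; every term containing at least one mean-zero-over-$Z$ factor integrates to $0$ by the replacement lemma, leaving only $\int \E(\alpha_1\mid Z)\otimes\E(\alpha_2\mid Z)\otimes\E(\alpha_3\mid Z)\,d\nu$. Since each $\E(\alpha_i\mid Z)$ is a function of $z_i\in Z$ only, this integral depends on $\nu$ only through its image $m$ on $Z\times Z\times Z$, giving exactly~\eqref{nudef}. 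Finally the "in other words" clause: \eqref{nudef} says precisely that $\E(\alpha_1\otimes\alpha_2\otimes\alpha_3\mid Z\times Z\times Z)=\E(\alpha_1\mid Z)\otimes\E(\alpha_2\mid Z)\otimes\E(\alpha_3\mid Z)$ in $L^2(m)$, which is the definition of $\nu$ being the conditionally independent product of the three systems over $Z$.

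\medskip

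The main obstacle I anticipate is the careful bookkeeping in the replacement lemma: one must verify that when averaging $S_2^n$ the interaction between the eigenfunctions of $R_1$ on $Y_1$ and those of $R_3$ on $Y_3$ produces a nonzero limit only along common eigenvalues, and that "common eigenvalue of $R_1$ and $R_3$" is the same as "eigenvalue coming from the triple common factor $Z$" — a priori $Z$ is defined as the common factor of all three systems, so one needs that a joint eigenvalue of the pair $(R_1,R_3)$ appearing in the joining $\nu$ is automatically an eigenvalue of $R_2$ as well. This is where properties~\eqref{indentialg} and the simultaneous ergodicity under pairs $S_i,S_j$ (the Remark) must be used essentially, rather than just ergodicity under a single $S_i$; getting this compatibility right is the technical heart of the proof, and I would isolate it as a preliminary lemma before assembling the three coordinate-replacement steps.
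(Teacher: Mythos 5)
Your overall plan --- replace each $\alpha_i$ by $\E(\alpha_i\mid Z)$ via a one-coordinate replacement lemma and then note that the remaining integral only sees the pushforward $m$ --- is the same reduction the paper carries out, but as written there is a genuine gap: the replacement lemma itself is never proved, and it is not a side technicality, since by your own expansion argument it is equivalent to the proposition. Your sketch stops exactly where the real work begins. Averaging over $S_2^n$ and applying the mean ergodic theorem, together with the independence of $\CY_1$ and $\CY_3$ under $\nu$ (the Remark in Section~\ref{descriy}) and the pair version of Lemma~\ref{claimprod}, only lets you replace $\alpha_1\otimes\alpha_3$ by its projection onto the common-eigenvalue factor of the \emph{pair} $(R_1,R_3)$; it says nothing yet about $\alpha_2$, so the hypothesis $\E(\alpha_2\mid Z)=0$ cannot be used at that stage, and the sentence ``the contribution of the mean-zero-over-$Z$ function $\alpha_2$ vanishes'' is unsubstantiated. (Also, the appeal to ``the ergodic theorem applied to $R_1$ on $Y_1$ and to $R_3$ on $Y_3$, with $\CI(R_1)$, $\CI(R_3)$ reducing to the Kronecker parts'' is off: each $(Y_i,\nu_i,R_i)$ is ergodic, so $\CI(R_i)$ is trivial; what is needed is $\CI(R_1\times R_3)$ with respect to $\nu_1\times\nu_3$, and that is where the independence of the coordinate $\sigma$-algebras and Lemma~\ref{claimprod} enter.) You then explicitly defer the crucial compatibility statement --- that a pair eigenvalue of $(R_1,R_3)$ contributing to the joining must come from the triple common factor $Z$ --- to an unproved ``preliminary lemma'' which you yourself call the technical heart; leaving that heart unproved is the gap.

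The paper closes precisely this point with a short phase computation: after two averaging steps (using invariance of $\nu$ under $S_1$ and under $S_2$, plus the pairwise independence) one may assume each $\alpha_i$ is $\K_i$-measurable, and by density an eigenfunction with eigenvalue $\lambda_i$; invariance under $S_1=\id\times R_2\times R_3$ gives $\int\alpha_1\alpha_2\alpha_3\,d\nu=e(\lambda_2+\lambda_3)\int\alpha_1\alpha_2\alpha_3\,d\nu$, and invariance under $S_2$ gives the same identity with $e(\lambda_1+\lambda_3)$, so the integral vanishes unless $\lambda_1=\lambda_2=-\lambda_3$, i.e.\ unless the eigenvalues involved are common to $Y_1,Y_2,Y_3$; the surviving terms are $Z$-measurable in each coordinate, which is exactly~\eqref{nudef}. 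If you add this second averaging (over $S_1$ or $S_3$) and the eigenvalue bookkeeping to your replacement lemma, your argument becomes complete and is then essentially the paper's proof.
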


\begin{proof}
For $i=1,2,3$ let $\K_i$ be the Kronecker factor of $Y_i$. Since $\nu$ is invariant under $S_1$,  we have
\begin{equation*}
\int \alpha_1(y_1)\alpha_2(y_2)\alpha_3(y_3)\, d\nu
=\frac{1}{N}\sum_{n=1}^N \int \alpha_1(y_1)\cdot R_2^n
\alpha_2(y_2)\cdot R_3^n \alpha_3(y_3)\, d\nu\ ,
\end{equation*}
for every $N$. Since $Y_2$, $Y_3$ are independent, taking the limit as $N\to\infty$, we have
$$\int \alpha_1(y_1)\alpha_2(y_2)\alpha_3(y_3)\, d\nu=\int \alpha_1(y_1)\cdot \E_{\nu_2\times \nu_3}(\alpha_2\otimes
\alpha_3\mid\CI(R_2\times R_3))(y_2,y_3)\, d\nu\ .$$ Note that the
functions $\alpha_2\otimes \alpha_3$ and $\E(\alpha_2\mid
\K_2)\otimes \E(\alpha_3\mid \K_3)$ have the same conditional
expectation on $\CI_{\nu_2\times\nu_3}(R_2\times R_3)$. Therefore we
have
$$
\int \alpha_1(y_1)\alpha_2(y_2)\alpha_3(y_3)\, d\nu =\int
\alpha_1(y_1)\E(\alpha_2\mid\K_2)(y_2)\E(\alpha_3\mid\K_3)(y_3)\, d\nu\
.
$$

By the same computation, substituting $S_2$ for $S_1$, we obtain
\begin{align}\label{nudescri1}
\int \alpha_1(y_1)\alpha_2(y_2)\alpha_3(y_3)\, d\nu =\int
\E(\alpha_1\mid\K_1)(y_1)\E(\alpha_2\mid\K_2)(y_2)\E(\alpha_3\mid\K_3)(y_3)\, d\nu\ .
\end{align}

On the other hand, for $i=1,2,3$, the right hand side of~\eqref{nudef} remains unchanged if $\E(\alpha_i\mid \K_i)$ is substituted for $\alpha_i$. Therefore we can reduce to the case that $\alpha_i$ is measurable with respect to $\K_i$. By density, we can suppose that $\alpha_i$ is an
eigenfunction of $Y_i$ corresponding to the eigenvalue $\lambda_i$. By invariance of $\nu$ under $S_1$ again, we get
$$\int
\alpha_1(y_1)\alpha_2(y_2)\alpha_3(y_3)\, d\nu=e(\lambda_2+\lambda_3)
\int \alpha_1(y_1)\alpha_2(y_2)\alpha_3(y_3)\, d\nu\ ,
$$
thus $\int \alpha_1(y_1)\alpha_2(y_2)\alpha_3(y_3)\, d\nu=0$ except if
$\lambda_2=-\lambda_3$.

By the same argument, we have that $\int
\alpha_1(y_1)\alpha_2(y_2)\alpha_3(y_3)\, d\nu=0$ except if
$\lambda_1=\lambda_2=-\lambda_3$. The announced conclusion follows.
\end{proof}

We have the following description of the measure $m$:
\begin{proposition}\label{mdescri}
The measure $m$ is the Haar measure on the compact subgroup $H$ of
$Z\times Z\times Z$, where
$$H=\{(z_1,z_2,z_3), z_1+z_2-z_3=0\}\ .
$$
\end{proposition}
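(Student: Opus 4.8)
The plan is to identify $m$ through two properties that together characterise the Haar measure of $H$: invariance under every translation by an element of $H$, and concentration on a single coset of $H$ (which, after a harmless choice of factor map, is $H$ itself). A probability measure on a compact abelian group having these two properties must be the Haar measure of that coset, so this will suffice.

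For the invariance, write $p_i\colon Y_i\to Z$ for the factor maps, so that each $R_i$ induces on $Z$ one and the same rotation $R$ (translation by a fixed $\alpha\in Z$), i.e.\ $p_i\circ R_i=R\circ p_i$; then $m$ is the image of $\nu$ under $(p_1,p_2,p_3)\colon Y\to Z\times Z\times Z$. From the formulas~\eqref{sdef} one reads off that $S_1,S_2,S_3$ descend, via this map, to the translations of $Z^3$ by $(0,\alpha,\alpha)$, $(\alpha,0,\alpha)$ and $(\alpha,-\alpha,0)$ respectively. All three vectors lie in $H$, and since $(Z,\theta,R)$ is ergodic (being a factor of the ergodic system $(Y_1,\nu_1,R_1)$) the subgroup $\{n\alpha\colon n\in\ZZ\}$ is dense in $Z$, so already the closed subgroup of $H$ generated by $(0,\alpha,\alpha)$ and $(\alpha,0,\alpha)$ is all of $H$. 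As $\nu$ is invariant under $S_1,S_2,S_3$, its image $m$ is invariant under these three translations, hence under every translation by an element of $H$.

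For the concentration, consider $\Psi\colon Y\to Z$ given by $\Psi(y_1,y_2,y_3)=p_1(y_1)+p_2(y_2)-p_3(y_3)$ (additive notation in $Z$). Using $p_i\circ R_i=R\circ p_i$ and~\eqref{sdef} one checks immediately that $\Psi$ is invariant under $S_1$ and under $S_2$; since $(Y,\CY,\nu,S_1,S_2)$ is ergodic, $\Psi$ equals $\nu$-a.e.\ a constant $\gamma\in Z$, and therefore $m$ is carried by the coset $\{(z_1,z_2,z_3)\colon z_1+z_2-z_3=\gamma\}$. Combined with the previous paragraph, $m$ is the Haar measure of that coset. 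Finally, $Z$ is determined as a factor of $Y_3$ only up to post-composing $p_3$ with a rotation of $Z$; replacing $p_3$ by $p_3+\gamma$ — which affects neither $R_3$, nor~\eqref{sdef}, nor the defining property of $Z$, and only translates $m$ in its third coordinate onto $H$ — we may assume $\gamma=0$, and then $m$ is the Haar measure of $H$.

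The two standard facts used (a translation-invariant probability measure on a compact coset is the Haar measure of that coset; an ergodic rotation has dense orbit) I would take for granted, so the only points needing care are the sign bookkeeping that turns~\eqref{sdef} into the three explicit translations above and the verification that re-normalising $p_3$ is harmless. I expect the sign bookkeeping to be the main place an error could creep in; the normalisation step is the only part of the argument that is not purely mechanical.
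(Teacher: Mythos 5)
Your proof is correct and takes essentially the same route as the paper: invariance of $m$ under the translations induced by $S_1,S_2$ plus density of $\ZZ\alpha$ yields invariance under all of $H$, while ergodicity forces $z_1+z_2-z_3$ to be a.e.\ constant, and the constant is removed by the same harmless renormalisation (the paper composes with $z\mapsto z-c$, you translate $p_3$). Your additional use of $S_3$ and the translation $(\alpha,-\alpha,0)$ is unnecessary but harmless.
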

\begin{proof}
Let $\alpha\in Z$ be the element defining the translation $R$. Since
$\nu$ is invariant and ergodic under the transformations $S_1$ and
$S_2$, then the measure $m$ is invariant and ergodic under the
transformations $\id\times R\times R$ and $R\times \id\times R$,
that is under the translations by $(0,\alpha,\alpha)$ and
$(\alpha,0,\alpha)$. The function $z_1+z_2-z_3$ is invariant under
these translations and by ergodicity it is equal to a constant
$c$.

Since the map $z\mapsto z-c$ is an isomorphism from $(Z,\theta, R)$
to itself, we can thus reduce to the case that $c=0$ and 
$m$ is concentrated on $H$. Since the subset $\ZZ \alpha$ is dense in $Z$, $m$ is
invariant under the translations by $(0,z,z)$ and $(z',0,z')$ for
any $z,z'\in Z$. Since these elements span $H$, $m$ is invariant
under translation by any element of $H$, and thus $m$ is the Haar
measure on $H$.
\end{proof}

\subsection{Study of $J_n$}
In this section, let $(Y,\nu,S_1,S_2)$ be as described in
Section~\ref{descriy}. We keep using the notation $g,g_0,g_1,g_2$
and $J_n=\bar{I}_n(g_0,g_1,g_2)$ introduced above. From this point, the proof is parallel to that in Section~\ref{prodpart2}.

\begin{proposition}\label{existu}
For every $\epsilon>0$, there exists an open neighborhood $U$ of $0$
in $Z$, such that
$$
 |J_0-J_n|<\epsilon\ .
$$
for every $n$ such that $n\alpha\in U$.
\end{proposition}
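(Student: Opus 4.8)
The plan is to compute $J_n$ explicitly on the model $(Y,\CY,\nu,S_1,S_2,S_3)$ provided by Lemma~\ref{equivatoy}, to observe that its dependence on $n$ is carried entirely by the translation by $n\alpha$ on the common rotation factor $(Z,\theta,R)$, and then to conclude by continuity of translation in $L^1(Z)$. Recall that $0\le f\le 1$, hence $0\le h\le 1$ and $0\le g,g_0,g_1,g_2\le 1$. Identifying $\overline X$ with $Y$ via Lemma~\ref{equivatoy} (so $\overline T_1,\overline T_2$ become $S_1,S_2$), and using that $g_0$ is $Y_1\times Y_2$-measurable, $g_1$ is $Y_1\times Y_3$-measurable, $g_2$ is $Y_2\times Y_3$-measurable, while $S_1=\id\times R_2\times R_3$ and $S_2=R_1\times\id\times R_3$ both act on the third coordinate as $R_3$, a direct substitution gives
$$J_n=\int g_0(y_1,y_2)\, g_1(y_1,R_3^n y_3)\, g_2(y_2,R_3^n y_3)\, d\nu(y_1,y_2,y_3)\ .$$
Thus the only place $n$ occurs is in $R_3^n$ acting on the third coordinate, and $R_3$ restricted to the factor $Z$ of $Y_3$ is the rotation $R$ by $\alpha$.

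Next I would reduce to product functions. Since $J_n$ is multilinear in $(g_0,g_1,g_2)$ and the projection of $\nu$ onto $Y_i\times Y_j$ ($i\ne j$) equals $\nu_i\times\nu_j$ (the remark in Section~\ref{descriy}), I can approximate $g_0,g_1,g_2$ in $L^1(\nu_1\times\nu_2)$, $L^1(\nu_1\times\nu_3)$, $L^1(\nu_2\times\nu_3)$ respectively by finite sums of products of indicator functions bounded by $1$, with an error on $J_n$ that is uniform in $n$. So it suffices to handle $g_0=a_0\otimes b_0$, $g_1=a_1\otimes c_1$, $g_2=b_2\otimes c_2$, for which
$$J_n=\int (a_0a_1)(y_1)\,(b_0b_2)(y_2)\,\bigl((c_1c_2)\circ R_3^n\bigr)(y_3)\, d\nu\ .$$
Applying Proposition~\ref{nudescri} together with the identity $\E\bigl((c_1c_2)\circ R_3^n\mid Z\bigr)=\E(c_1c_2\mid Z)\circ R^n$ (valid because $Z$ is an $R_3$-invariant factor of $Y_3$ on which $R_3$ acts as $R$), and writing $P=\E(a_0a_1\mid Z)$, $Q=\E(b_0b_2\mid Z)$, $C=\E(c_1c_2\mid Z)$, I obtain
$$J_n=\int_{Z^3}P(z_1)\,Q(z_2)\,C(z_3+n\alpha)\, dm(z_1,z_2,z_3)\ .$$

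To finish, set $\Psi(w)=\int_{Z^3}P(z_1)Q(z_2)C(z_3+w)\, dm$ for $w\in Z$, so that $J_n=\Psi(n\alpha)$ and $J_0=\Psi(0)$. Since $|P|,|Q|,|C|\le 1$ and the third marginal of $m$ is the Haar measure $\theta$ of $Z$ (because $m$ is the image of $\nu$ on $Z^3$, hence projects to the image of $\nu_i$ on $Z$, i.e.\ to $\theta$, on each coordinate; this is also immediate from Proposition~\ref{mdescri}), one has
$$|\Psi(w)-\Psi(w')|\le\int_Z |C(z+w)-C(z+w')|\, d\theta(z)\ ,$$
which tends to $0$ as $w-w'\to 0$ by continuity of translation in $L^1(Z,\theta)$. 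Hence $\Psi$ is continuous at $0$; choosing an open neighborhood $U$ of $0$ in $Z$ with $|\Psi(w)-\Psi(0)|<\epsilon$ for every $w\in U$ gives $|J_0-J_n|<\epsilon$ whenever $n\alpha\in U$, which is the assertion.

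The step I expect to require the most care is the passage to $Z$: one cannot apply $L^1$-continuity of translation directly on $Y_3$, since $R_3^n$ need not be close to the identity for any $n\ne 0$, and it is precisely the conditional independence of $\nu$ over $Z$ (Proposition~\ref{nudescri}) that collapses the $n$-dependence onto the rotation $(Z,\theta,R)$, where smallness of $n\alpha$ has meaning. The density reduction likewise relies essentially on the pairwise marginals of $\nu$ being product measures.
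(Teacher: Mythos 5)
Your proof is correct and follows essentially the same route as the paper: reduce to finite sums of product functions (the paper's "density argument", which you justify more explicitly via the product pairwise marginals and invariance), use Proposition~\ref{nudescri} to rewrite $J_n$ as an integral over $Z^3$ with the $n$-dependence only in the translate $z_3+n\alpha$, and conclude by continuity of translation on $(Z,\theta)$. The only cosmetic differences are that you work directly with $g_0,g_1,g_2$ rather than with general bounded $h_i$ and use $L^1$-continuity where the paper uses an $L^2$ neighborhood; both are immaterial.
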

\begin{proof}
More generally, we show for all bounded functions $h_0$ on $Y_1\times Y_2$
$Y$, $h_1$ on $Y_1\times Y_3$, and $h_2$ on $Y_2\times Y_3$, that there exists an open neighborhood $U$ of $0$ in $Z$, such
that
$$
 \big|\int h_0\cdot h_1\cdot h_2\, d\nu-\int h_0\cdot S_1^n h_1\cdot S_2^n h_2\, d\nu\big|<\epsilon
$$
for every $n$ such that $n\alpha\in U$.

Without loss of generality, we can suppose that
$\max\{|h_0|,|h_1|,|h_2|\}\leq 1$. We first suppose that
$h_0,h_1,h_2$ are product functions:
$h_0(y_1,y_2)=\alpha(y_1)\beta(y_2)$,
$h_1(y_1,y_3)=\alpha'(y_1)\gamma(y_3)$, and
$h_2(y_2,y_3)=\beta'(y_2)\gamma'(y_3)$, for some functions $\alpha,\alpha'$ on $Y_1$, $\beta,\beta'$ on $Y_2$, and
$\gamma,\gamma'$ on $Y_3$, all of them being bounded by $1$ in
absolute value. Then, for every $n$
$$\int h_0 \cdot S_1^n h_1 \cdot S_2^n h_2 d\nu=\int(\alpha\alpha')(y_1)\cdot (\beta\beta')(y_2)\cdot (\gamma\gamma')(R_3^n y_3)\,
d\nu(y_1,y_2,y_3)\ .$$
By Proposition~\ref{nudescri}, this is equal to
$$\int \E(\alpha\alpha'\mid
Z)(z_1)\E(\beta\beta'\mid Z)(z_2)\E(\gamma\gamma'\mid
Z)(z_3+n\alpha)\, d m(z_1,z_2,z_3)\ .
$$Set $$U=\{t, \int \left|\E(\gamma\gamma'\mid
Z)(z_3+t)-\E(\gamma\gamma'\mid Z)(z_3)\right|^2 d\theta
(z_3)<\epsilon\},$$ then $U$ is an open neighborhood of $0$ in $Z$
and$ |J_0-J_n|<\epsilon \text{ when } n\alpha\in U$.

By linearity, the neighborhood $U$ with the announced property exists when each $h_i$, $i=1,2,3$, is a finite sum of product functions of the type described above. The general case follows by a density argument.
\end{proof}

Let $\epsilon>0$ be as in Theorem~\ref{mmainf}, and let $U$ be the
neighborhood of $0$ in $Z$ defined by Proposition \ref{existu} with
$\epsilon/2$ substituted for $\epsilon$. Let $\chi$ be a
non-negative continuous function on $Z$, such that
$$\int\chi d\theta=1\ ,$$ and $$\chi(z)=0 \text{ if } z\notin U\ .$$
We keep the notation $\epsilon$, $U$, $\chi$ in the sequel.

By Proposition~\ref{diff} and a density argument, we have
\begin{corollary}\label{cordiff}
The averages of the difference of
$$\chi(n\alpha)\big(I_n-J_n\big)$$
converge to zero.
\end{corollary}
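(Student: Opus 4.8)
The plan is to deduce Corollary~\ref{cordiff} from Proposition~\ref{diff} by a Fourier approximation (the ``density argument''). First I would record a uniform bound on the quantities involved: since $0\le f\le 1$ we have $|I_n|\le 1$ for every $n$; and since $g_0,g_1,g_2$ are conditional expectations of $h$ with $0\le h\le 1$, we have $|g_i|\le 1$, hence $|J_n|\le 1$. Therefore $|I_n-J_n|\le 2$ for all $n$.

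The key point is that the characters of the compact abelian group $Z$, evaluated along the orbit $\{n\alpha\}$, are exponentials: if $\gamma$ is a character of $Z$ then $\gamma(n\alpha)=\gamma(\alpha)^n=e(nt)$, where $e(t)=\gamma(\alpha)$. Consequently, for any trigonometric polynomial $P=\sum_{j=1}^k c_j\gamma_j$ on $Z$ we have $P(n\alpha)=\sum_{j=1}^k c_j\,e(nt_j)$, so by Proposition~\ref{diff} applied to each $t_j$ together with linearity, the averages of $P(n\alpha)\bigl(I_n-J_n\bigr)$ converge to $0$.

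For the general continuous $\chi$, by the Stone--Weierstrass theorem the trigonometric polynomials are dense in $C(Z)$ for the uniform norm, so given $\delta>0$ we may choose $P$ with $\norm{\chi-P}_\infty<\delta$. Using $|I_n-J_n|\le 2$, for every interval $[M,N)$,
\begin{equation*}
\left|\frac{1}{N-M}\sum_{n\in[M,N)}\chi(n\alpha)\bigl(I_n-J_n\bigr)\right|\le\left|\frac{1}{N-M}\sum_{n\in[M,N)}P(n\alpha)\bigl(I_n-J_n\bigr)\right|+2\delta\ .
\end{equation*}
Letting $N-M\to\infty$, the first term on the right tends to $0$ by the previous paragraph, so the $\limsup$ of the left-hand side is at most $2\delta$; since $\delta>0$ is arbitrary, this is precisely the assertion of Corollary~\ref{cordiff}. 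There is no real obstacle here; the only points needing any care are the uniform bound $|I_n-J_n|\le 2$ and the identification of the characters of $Z$ along $\{n\alpha\}$ with exponentials $e(nt)$, which is exactly what makes Proposition~\ref{diff} applicable.
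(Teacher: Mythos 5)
Your proof is correct and is exactly the ``density argument'' the paper has in mind: approximate $\chi$ uniformly by trigonometric polynomials on $Z$, apply Proposition~\ref{diff} to each exponential $e(nt_j)$ by linearity, and absorb the error using the uniform bound on $|I_n-J_n|$. Nothing further is needed.
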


\begin{lemma}\label{diffiny}
\begin{equation}\label{epsineig}
\limsup_{N-M\rightarrow \infty}\left|\frac{1}{N-M}\sum_{n\in
[M,N)}\chi(n\alpha)\big(J_0-J_n\big)\right|\leq\frac{\epsilon}{2}\ .
\end{equation}
\end{lemma}
\begin{proof}
If $n\alpha\notin U$, then $\chi(n\alpha)\big(J_0-J_n\big)=0$. If
$n\alpha\in U$, then by Proposition \ref{existu},
$$\chi(n\alpha)|J_0-J_n|<\frac{\epsilon}{2}\cdot\chi(n\alpha)\
.$$  Since $\{n\alpha\}$ is uniformly distributed in $Z$, the
averages of $\chi(n\alpha)$ converge to $1$, this gives
(\ref{epsineig}).
\end{proof}

\subsection{End of the proof}
We have the following estimate of $J_0$:
\begin{corollary}[of Lemma~\ref{ineq}]\label{j0}
 $$J_0\geq (\int g d\nu)^4=(\int f d\mu)^4 \ .$$
\end{corollary}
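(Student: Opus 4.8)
The plan is to combine Lemma~\ref{ineq} with the structure of the measure $\nu$ described in Section~\ref{descriy}. Recall that $J_0 = \bar I_0(g_0,g_1,g_2) = \int g_0\cdot g_1\cdot g_2\,d\nu$, where, under the identification of Lemma~\ref{equivatoy}, $g_0 = \E(g\mid Y_1\times Y_2)$, $g_1 = \E(g\mid Y_1\times Y_3)$, and $g_2 = \E(g\mid Y_2\times Y_3)$. So I want a lower bound of the form $\int g_0 g_1 g_2\,d\nu \geq (\int g\,d\nu)^4$, with the final equality coming from~\eqref{eq:integraleq1}.

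\textbf{First step.} I would observe that $0\le h\le 1$ (since $h = f\circ\pi'\circ\pi''\circ\bar\pi$ and $0\le f\le 1$), and hence $g = \E(h\mid\overline{\mathcal M})$ satisfies $0\le g\le 1$; in particular $g$ is a bounded non-negative function on $Y$. The three $\sigma$-algebras $Y_1\times Y_2$, $Y_1\times Y_3$, $Y_2\times Y_3$ are three sub-$\sigma$-algebras of $\CY$, and $g_i$ is exactly the conditional expectation of $g$ onto the $i$-th of them.

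\textbf{Second step (the key point).} I would like to apply Lemma~\ref{ineq} with $k=3$, $X$ replaced by $(Y,\CY,\nu)$, $f$ replaced by $g$, and the three sub-$\sigma$-algebras $\CX_1 = Y_1\times Y_2$, $\CX_2 = Y_1\times Y_3$, $\CX_3 = Y_2\times Y_3$. That would give
\begin{equation*}
\int g\cdot\E(g\mid Y_1\times Y_2)\cdot\E(g\mid Y_1\times Y_3)\cdot\E(g\mid Y_2\times Y_3)\,d\nu \;\geq\; \Big(\int g\,d\nu\Big)^4,
\end{equation*}
i.e. $\int g\cdot g_0\cdot g_1\cdot g_2\,d\nu \ge (\int g\,d\nu)^4$. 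The main obstacle is that this bounds $\int g\,g_0 g_1 g_2\,d\nu$, not $\int g_0 g_1 g_2\,d\nu = J_0$; I need to remove the leftmost factor $g$. The way around this is to absorb $g$ into one of the conditional expectations using a telescoping/iterated-conditioning argument: since $Y_2\times Y_3 \supseteq$ nothing containing $g$ trivially, instead I would directly reprove the inequality in the form needed. Concretely, I would apply the Hölder argument of Lemma~\ref{ineq}'s proof but with the three factors being $g_0,g_1,g_2$ themselves rather than $g$. Writing (after reducing to $g$ bounded below by $\epsilon>0$ and letting $\epsilon\to 0$)
\begin{equation*}
g = (g_0 g_1 g_2)^{1/3}\cdot\prod_{i=1}^{3}\Big(\frac{g}{g_i}\Big)^{1/3}\cdot\big(g/g\big)^{0},
\end{equation*}
wait — the exponents do not balance. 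The cleaner route: apply Lemma~\ref{ineq} verbatim with $k=3$ to get $\int g\,g_0 g_1 g_2\,d\nu\ge(\int g)^4$, and then note $\int g_0 g_1 g_2\,d\nu \ge \int g\, g_0 g_1 g_2\,d\nu$ fails in general since $g\le 1$ makes $g\,g_0g_1g_2\le g_0g_1g_2$ — that inequality goes the right way. Indeed $0\le g\le 1$ gives $g\cdot g_0 g_1 g_2\le g_0 g_1 g_2$ pointwise (all factors non-negative), hence $J_0 = \int g_0 g_1 g_2\,d\nu \ge \int g\,g_0 g_1 g_2\,d\nu \ge (\int g\,d\nu)^4$.

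\textbf{Third step.} Finally I would invoke~\eqref{eq:integraleq1}, which states $\int g\,d\nu = \int f\,d\mu$, to rewrite the bound as $J_0 \ge (\int f\,d\mu)^4$, completing the proof. The only subtlety to be careful about is the reduction-to-$\epsilon$ step inside the application of Lemma~\ref{ineq}, but that is handled entirely within Lemma~\ref{ineq} itself, so nothing extra is needed here; the substantive observations are just that $g_i = \E(g\mid\cdot)$ on the appropriate sub-$\sigma$-algebra, that $0\le g\le 1$, and the identity~\eqref{eq:integraleq1}.
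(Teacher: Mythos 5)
Your final argument is correct and is essentially the paper's own proof: use $0\le g\le 1$ to bound $J_0=\int g_0g_1g_2\,d\nu\ge\int g\cdot g_0g_1g_2\,d\nu$, apply Lemma~\ref{ineq} with $k=3$ and the sub-$\sigma$-algebras $Y_1\times Y_2$, $Y_1\times Y_3$, $Y_2\times Y_3$, and conclude with~\eqref{eq:integraleq1}. The detour in your second step (worrying about removing the factor $g$ and sketching a re-proof of the inequality) is unnecessary, as you yourself note before settling on the correct pointwise bound.
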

\begin{proof}
Since $0\leq g\leq 1$, we have
$$J_0=\int g_0\cdot g_1\cdot g_2\, d\nu\geq \int g\cdot g_0\cdot g_1\cdot g_2\, d\nu\ .$$
Applying Lemma~\ref{ineq} to the probability space $(Y,\nu)$ with $k=3$, $\CX_1=Y_1\times Y_2$,
$\CX_2=Y_1\times Y_3$ and $\CX_3=Y_2\times Y_3$, the announced conclusion follows.
\end{proof}

We can now resume the proof of Theorem~\ref{mmainf}:
\begin{proof}[Proof of Theorem~\ref{mmainf}]
By Corollary~\ref{j0}, it suffices to prove that the set $E\colon =\{n\in \ZZ\colon
I_n>J_0-\epsilon\}$ is syndetic. Suppose by contradiction that $E$ is not syndetic. Then there
exists a sequence of intervals $[M_i,N_i)$ whose lengths $N_i-M_i$
tend to infinity and such that $I_n\leq J_0-\epsilon$ for every $i$ and every $n\in
[M_i,N_i)$. Therefore
\begin{equation}\label{eq:proofmmainf}
\frac{1}{N_i-M_i}\sum_{n\in
[M_i,N_i)}\chi(n\alpha)\big(J_0-I_n\big)\geq
\frac{1}{N_i-M_i}\sum_{n\in [M_i,N_i)}\chi(n\alpha)\cdot\epsilon
\end{equation}
for every $i$. Taking the $\limsup$ of both sides, since the
averages of $\chi(n\alpha)$ converge to $1$, we get that the
$\limsup$ of the left hand side of~\eqref{eq:proofmmainf} is greater
than or equal to $\epsilon$.

On the other hand, it follows from Corollary~\ref{cordiff} and
Lemma~\ref{diffiny} that the $\limsup$ of the left hand side
of~\eqref{eq:proofmmainf} is less than or equal to $\epsilon/2$. We
have a contradiction.
\end{proof}

\section{Proof of Theorem~\ref{counterex}}\label{sec:ex}
Let $(Y, \nu, S)$ be the two-side
$(\frac{1}{3},\frac{1}{3},\frac{1}{3})$-Bernoulli-shift. This means
that $\{0,1,2\}$ is endowed with the uniform probability measure
$\tau$, and that $Y=\{0,1,2\}^{\ZZ}$ is endowed with the product measure $\nu=\tau^{\ZZ}$. The points of $Y$ are written $y =(y_{n}\colon n\in
\mathbb Z)$ where $y_{n}\in \{0,1,2\}$ for every $n$. The shift map
$S$ is given by $(Sy)_n=y_{n+1}$ for every $n$.

Define $$(X,\mu,T_1,T_2)=(Y\times Y\times Y, \nu\times \nu\times\nu,
S\times \id\times S, \id\times S\times S).$$ Since $(Y, \nu, S)$ is
weakly mixing, the system $(X,\mu,T_1,T_2)$ is ergodic.

For $(i,j,k)\in \{0,1,2\}^3$, let $f(i,j,k)$ be defined 
by the following table:

\begin{center}
\begin{tabular}{|c|c|c|c|}
\hline
 & i=0 & i=1 & i=2\\
\hline
j=0 & 0\ 0\ 1 & 0\ 0\ 1 & 1\ 1\ 1 \\
j=1 & 0\ 1\ 1 & 0\ 0\ 1 & 1\ 1\ 0 \\
j=2 & 1\ 1\ 0 & 1\ 1\ 1 & 1\ 0\ 0 \\
\hline
k=  & 0\ 1\ 2 & 0\ 1\ 2 & 0\ 1\ 2\\
\hline
\end{tabular}
\end{center}

Let $F$ be the function on $Y\times Y\times Y$ defined by $F( y, z, w)=f(y_0,z_0,w_0)$. Then $F$ is an indicator function of some measurable subset $A\subset
X$. For every $n\neq 0$, we have
\begin{eqnarray*}
& &\mu(A\cap T_1^n A\cap T_2^n A)=\int F\cdot T_1^{-n} F\cdot
T_2^{-n} F\, d\mu\\&=&\int F(y, z, w)\cdot F(S^{-n} y,z,S^{-n} w)\cdot
F( y, S^{-n} z,S^{-n} w)\, d\mu( y, z, w)\\&=&\int f(y_0,z_0,w_0)\cdot
f(y_{-n},z_0,w_{-n})\cdot f(y_0,z_{-n},w_{-n})\, d\mu( y, z, w).
\end{eqnarray*}

Since $n\neq 0$, the variables at the position $0$ are independent
with those at the position $-n$, we have
\begin{eqnarray*}
& &\mu(A\cap T_1^n A\cap T_2^n A)\\&=&\int
f(y,z,w')f(y',z,w)f(y,z',w)\, d\tau(y)d\tau(y')d\tau(z)d\tau(z')d\tau(w)d\tau(w')\\
&=&\frac{1}{3^6}\sum_{i,j,k,i',j',k'}f(i,j,k')f(i,j',k)f(i,j',k)\
.\end{eqnarray*} By computation, for every integer $n\neq 0$, we
have
\begin{eqnarray*}
 \mu(A\cap T_1^n A\cap T_2^n A)=\frac{145}{729}<0.96(\frac{16}{27})^3\\=
0.96(\frac{1}{3^3}\sum_{i,j,k}f(i,j,k))^3=0.96(\mu(A))^3\ .
\end{eqnarray*}

Let $0<c\leq 1$. There exists an integer $l>0$ such that $(0.96)^l<
c$. Let $(X',\mu',T_1',T_2')$ be the $l$-times product system of
$(X,\mu,T_1,T_2)$, and $A'$ be the $l$-times product of set $A$. Then the system $(X',\mu',T_1',T_2')$ is ergodic and the set $A'$
satisfies the announced property.

\bibliographystyle{amsplain}

\end{document}